\documentclass[12pt]{article}

\textwidth 18truecm
\oddsidemargin -1cm
\evensidemargin -1cm
\textheight 23.2truecm
\topmargin -10mm
\thispagestyle{empty}

\usepackage{latexsym}

\usepackage{graphicx}
\usepackage{amsmath}
\usepackage{dsfont}
\usepackage{amssymb}
\usepackage{amsthm}

\newtheorem{theorem}{Theorem}[section]

\newtheorem{lemma}[theorem]{Lemma}

\def\E{{\mathds E}}

\def\P{{\mathds P}}
\def\R{{\mathds{R}}}

\def\I{{\mathds 1}}

\def\cE{{\mathcal E}}
\def\cF{{\mathcal F}}

\def\cJ{{\mathcal J}}

\def\cN{{\mathcal N}}
\def\cS{{\mathcal S}}

\def\a{{\alpha}}
\def\b{{\beta}}
\def\e{{\varepsilon}}

\def\l{{\lambda}}

\def\t{{\tau}}

\def\bd{{\partial }}

\DeclareMathOperator{\aff}{aff}
\DeclareMathOperator{\lin}{lin}

\DeclareMathOperator{\diam}{diam}

\def\bm #1{{\boldsymbol{#1}}}

\unitlength1cm

\begin{document}

\title{The convex hull of random points \\ on the boundary of a simple polytope}
\author{M. Reitzner, C. Sch\"utt, E. M. Werner\thanks{Partially supported by NSF grant DMS 1811146}}
\maketitle

\begin{abstract}
The convex hull of $N$ independent random points chosen on the boundary of a simple polytope in $ \R^n$ is investigated. 
Asymptotic formulas for the expected number of vertices and facets, and for the expectation of the volume difference are derived. 
This is one of the first investigations leading to rigorous results for random polytopes which are neither simple nor simplicial.
The results contrast existing results when points are chosen in the interior of a convex set.
\end{abstract}

\section{Introduction and statement of results}
Let $K$ be a convex set of dimension $n$ in $\R^n$, $n \geq 2$. Let $N \in \mathbb{N}$ and choose $N$ random points $X_1, \dots, X_N$  uniformly distributed either in the interior of $K$ or on the boundary $\bd K$ of $K$, and denote by $P_N= [X_1, \dots, X_N]$ the convex hull of these points. The expected number of vertices $\E f_0(P_N)$, the expected number of $(n-1)$-dimensional faces $\E f_{n-1}(P_N)$, and the expectation of the volume difference $V_n(K)-\E V_n (P_N)$ of $K$ and $P_N$ are of interest. Since explicit results for fixed $N$ cannot be expected one investigates the asymptotics as $N \to \infty$.

If the vertices of the random polytopes are chosen \emph{from the interior} of a convex set, there is a vast amount of literature. Investigations started with two famous articles by R\'enyi and Sulanke \cite{RS1}, \cite{RS2} who obtained in the planar case the asymptotic behavior of the expected area $\E V_2(P_N)$ when the boundary of $K$ is sufficiently smooth and when $K$ is a polygon.
In a series of papers these formulae were generalized to higher dimensions. In the case when the boundary of $K$ is sufficiently smooth, we know by work of Wieacker \cite{Wi}, Schneider and Wieacker \cite{SchWi1}, B\'ar\'any 
\cite{Bar2}, Sch\"utt \cite{Schu2} and B\"or\"oczky, Hoffmann and Hug \cite{BHH} that the volume difference behaves like
\begin{equation}\label{eq:EVi}
V_n(K)-\E V_n (P_N) = c_n \Omega(K) \, V_n(K)^\frac 2 {n+1} N^{- \frac 2 {n+1}} \left(1+o(1)\right),
\end{equation}
where $P_N$ is the convex hull of uniform iid random points in the interior of $K$, $\Omega(K)$ denotes the affine surface area of $K$ and  $c_n$ is a constant that depends only on $n$.
The generalization to all intrinsic volumes is due to B\'ar\'any \cite{Bar2},  \cite{Bar2c} and Reitzner \cite{Re3}. 
The corresponding results for random points chosen in a polytope $P$ are much more difficult. In a long and intricate proof 
B\'ar\'any and Buchta \cite{BB} settled the case of polytopes $P \subset \R^n $,
$$
V_n(P)-\E V_n(P_N) = \frac{ \operatorname{flag}(P)}{(n+1)^{n-1} (n-1)!} N^{-1} (\ln N)^{n-1} (1+o(1)),
$$
where $ \operatorname{flag}(P)$ is the number of flags of the polytope $P$. A {\sl flag} is a sequence of $i$-dimensional faces $F_i$ of $P$, $i=0, \dots , n-1$, such that $F_i \subset F_{i+1}$. The phenomenon that the expression should only depend on this combinatorial structure of the polytope had been discovered in connection with floating bodies by Sch\"utt \cite{Schu1}.

\bigskip
Due to Efron's identity \cite{Ef} the results on $\E V_n (P_N)$ can be used to determine the expected number of vertices of $P_N$. 
The general results for the number of $\ell$-dimensional faces $f_\ell(P_N)$ are due to Wieacker \cite{Wi}, B\'ar\'any and Buchta \cite{BB} and Reitzner \cite{Re7}: if $K$ is a smooth convex body and $\ell \in \{0, \dots, n-1\}$, then 
\begin{equation} \label{eq:C2Efl}
\E f_\ell (P_N) = c_{n,\ell} \, \Omega(K) \, N^{ \frac {n-1} {n+1}}(1+o(1)),
\end{equation} 
and if $P$ is a polytope, then, with a different constant, but still denoted  $c_{n,\ell}$, 
\begin{equation} \label{eq:PdEfl}
\mathbb E f_\ell (P_N) =  c_{n,\ell}\,   \operatorname{flag}(P) \, (\ln N)^{ {n-1}} (1+o(1)) .
\end{equation}
Choosing random points from the interior of a convex body always produces a simplicial polytope with probability one. Yet often applications of the above mentioned results in computational geometry, the analysis of the average complexity of algorithms and optimization necessarily deal with non simplicial polytopes and it became crucial to have analogous results for random polytopes without this very specific combinatorial structure.
The only classical results for this question concern $0/1$-polytopes in high dimensions \cite{BPo, DFMc2, GGM1, GGM2, MPR}, which have a highly interesting combinatorial structure,  yet in a very specific setting. 
And very recently Newman \cite{Newman} used a somewhat dual approach to construct general random polytopes from random polyhedra.

In view of the applications it is also of high interest to show that the face numbers of most realizations of random polytopes are close to the expected ones, and thus to prove variance estimates, central limit theorems and deviation inequalities. There has been serious progress in this direction in recent years, and we refer to the survey articles \cite{surv-DH, surv-DHMR, surv-MR}. 

In all these results there is a general scheme: if the underlying convex sets are smooth then the number of faces and the volume difference behave asymptotically like powers of $N$,  if the underlying sets are convex polytopes then logarithmic factors show up. Metric and combinatorial quantities only differ by a factor $N$.

In this paper we are discussing the case that the random points are chosen from the boundary of a polytope $P$. 
In high dimensions, this produces random polytopes which are neither simple nor 
simplicial. 
We  point out  though that still most of the facets are simplices.
Thus our results are a decisive step in taking into account the point mentioned above. The applications in computational geometry, the analysis of the average complexity of algorithms and optimization need formulae for the combinatorial structure of the involved random polytopes and thus the question on the number of facets and vertices are of interest. 

From (\ref{eq:PdEfl}) it follows immediately that for random polytopes whose points are chosen from the
boundary of a polytope the expected number of vertices is
$$
\E f_{0}(P_{N})
=c_{n-1,0} \operatorname{flag}(P) \, (\ln N)^{ {n-2}} (1+o(1))
$$
with  $c_{n-1,0}$ from (\ref{eq:PdEfl}), independent of $P$.
Indeed, a chosen point is a vertex of a random polytope if and only if it is a vertex of the 
convex hull of all the random points chosen in the same facet of $P$. 
We define $\ln_+ x=\max\{0, \ln x\}$.
By (\ref{eq:PdEfl}) we get that the expected number of vertices 
equals
$$
c_{n-1,0} \sum_{F_i} \operatorname{flag}(F_i)\E (\ln_+ N_{i} )^{ {n-2}} (1+o(1)),
$$
where we sum over all facets $F_i$ of $P$ and $N_{i}$ is a binomial distributed random variable with parameters $N$ and $p_i=\l_{n-1}(F_i)/(\sum_{F_j} \l_{n-1}(F_j))$. Here $\l_{n-1}$ is the $(n-1)$-dimensional Lebesgue measure. It is left to observe that
$ \E (\ln_+ N_{i})^{n-2} = (\ln N)^{n-2}(1+o(1))$.
and
$ \sum_{F_i} \operatorname{flag}(F_i) = \operatorname{flag}(P) $. 

For our first main results we  restrict our investigations to simple polytopes. We recall that a polytope in $\mathbb{R}^n$
is called simple if each of its vertices is contained in exactly $n$ facets.
\begin{theorem}\label{th:EfN}
Let $n \geq 2$ and choose $N$ uniform random points on the boundary of a simple polytope $P$ in $\mathbb{R}^n$, $n \geq 2$.
For the expected number of facets of the random polytope $P_N$, we have
\begin{eqnarray*}
\E f_{n-1}(P_N)  
&=& 
c_{n} f_0( P) (\ln N)^{n-2} (1+O((\ln N)^{-1})),
\end{eqnarray*}
with some $ c_{n} >0$ independent of $P$.
\end{theorem}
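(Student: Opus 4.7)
The plan is to split $f_{n-1}(P_N)$ into ``flat'' facets, those contained in a facet of $P$, and ``cap'' facets, the remainder. A flat facet is necessarily $P_N\cap F$ for some facet $F$ of $P$, and almost surely each $F$ contributes exactly one flat facet, so $\E f_{n-1}^{\mathrm{flat}}(P_N) = f_{n-1}(P) + o(1) = O(1)$, which is absorbed into the error $O((\ln N)^{n-3})$ for $n\ge 3$ (and is already a part of the main term when $n=2$).

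Next, the expected number of cap facets would be written as a $U$-statistic. Since a cap facet is almost surely simplicial (three or more of its vertices sharing a facet of $P$ would have to lie on an $(n-2)$-flat), with $\mu:=V_{n-1}(\bd P)$ one has
\begin{equation*}
\E f_{n-1}^{\mathrm{cap}}(P_N) = \binom{N}{n}\mu^{-n}\int_{(\bd P)^n}\I\{\text{cap}\}\left(1-\frac{v_\partial(C)}{\mu}\right)^{N-n} dX_1\cdots dX_n,
\end{equation*}
where $C=C(X_1,\ldots,X_n)$ is the smaller piece of $P$ cut off by $\aff(X_1,\ldots,X_n)$ and $v_\partial(C):=V_{n-1}(\bd P\cap C)$. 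The key observation is that the dominant contribution comes from configurations where the $n$ points lie one on each of the $n$ facets meeting at some vertex $v$ of $P$: since $P$ is simple, exactly $n$ facets meet at each vertex, and in any other facet distribution $v_\partial(C)$ stays bounded below, producing an exponentially small contribution.

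I would then localize near a vertex $v$ and make an affine change of coordinates sending $v$ to $0$ and the incident facets to the coordinate hyperplanes. The cap hyperplane now reads $\sum_k x_k/a_k = 1$ with $a\in\R_+^n$, and
\begin{equation*}
v_\partial(C(a)) = \frac{1}{(n-1)!}\sum_{s=1}^n\prod_{k\neq s} a_k.
\end{equation*}
Changing variables from $(X_1,\ldots,X_n)$ to $(a_1,\ldots,a_n,Y_1,\ldots,Y_n)$ with $Y_j$ the intrinsic position of $X_j$ inside the $(n-2)$-simplex $F_j^v\cap H(a)$, a Blaschke--Petkantschin-type Jacobian computation combined with integrating out the $Y_j$'s produces an integrand $\Phi_n(a)$ which is homogeneous in $a$ of degree $n(n-2)$ (as a consequence of matching the $\lambda^{n(n-1)}$-scaling of $\prod dX_j$ with the scaling of $da\prod dY_j$). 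Including a factor $n!$ for the permutations of the $X_j$'s among the facets at $v$ and summing over the $f_0(P)$ vertices, the task reduces to the large-$N$ asymptotics of
\begin{equation*}
J(N) := \int_{\R_+^n}\Phi_n(a)\left(1-\frac{v_\partial(C(a))}{\mu}\right)^{N-n} da.
\end{equation*}

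The hard part will be extracting the factor $(\ln N)^{n-2}$ from $J(N)$. Passing to polar coordinates $a = r\omega$, $\omega\in S^{n-1}_+$, and evaluating the radial integral via a Beta-function substitution leaves (up to an explicit constant and factors of $\mu$) an angular integral of $\Phi_n(\omega)/v_\partial(C(\omega))^n$ over $S^{n-1}_+$, truncated to the region $v_\partial(C(\omega))\gtrsim 1/N$. A direct computation shows that $v_\partial(C(\omega))$ vanishes exactly on the subset where at least two of the coordinates $\omega_k$ are zero, i.e.\ on the union of coordinate subvarieties of $S^{n-1}_+$ of codimension $\ge 2$, forming a nested stratification of depth $n-2$. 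Along each stratum $\Phi_n/v_\partial^n$ exhibits algebraic singularities whose truncated integrals each contribute one factor of $\ln N$, and the main obstacle is a careful stratified Laplace-type analysis showing that these nested logarithms combine into exactly $(\ln N)^{n-2}$ with a strictly positive coefficient $c_{n,n-1}$, while lower-dimensional strata and the Taylor expansion of the exponential contribute only $O((\ln N)^{n-3})$.
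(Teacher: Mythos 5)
Your proposal follows the same overall route as the paper: the $U$-statistic reduction, localization at each vertex $v$ of the simple polytope, a Blaschke--Petkantschin-type change of variables with the hyperplane section parameters $(a_1,\dots,a_n)$ as the new coordinates, and a Laplace-type analysis of the resulting integral. The engine the paper uses for this last step is the asymptotic expansion of $\cJ(\bm l)$ in Lemma~\ref{le:main-asymp-exp2}, which plays exactly the role of your ``stratified Laplace-type analysis.''

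However, your guiding heuristic --- and with it the concrete formula you set up for $J(N)$ --- contains a genuine error. You assert that the dominant contribution comes from configurations where the $n$ sample points lie one on each of the $n$ facets through $v$; accordingly you parametrize $X_j$ by its intrinsic position $Y_j$ in $F_j^v\cap H(a)$, ``include a factor $n!$ for the permutations of the $X_j$'s among the facets at $v$,'' and drop all other assignments. This is the opposite of what happens. Write $m_i$ for the number of the $n$ sampled points lying on the $i$-th facet through $v$. In the paper these multiplicities enter as the exponents of the $t_i$ in the reduced integral $\cJ(\bm m-\bm 1)$ (Section~\ref{ssec:mainterm}), and by Lemma~\ref{le:main-asymp-exp1} and Lemma~\ref{le:main-asymp-exp2} (with $L=0$): if $m_i\geq 1$ for all $i$ --- in particular if $m_i\equiv 1$, the case you single out --- the contribution is $\Theta(N^{-n})$ with \emph{no} logarithmic factor, so after multiplying by $\binom{N}{n}$ it contributes only $O(1)$; if at least three (but not all) $m_i$ are positive, it contributes $O(N^{-n}(\ln N)^{n-3})$; and the factor $(\ln N)^{n-2}$ arises \emph{exclusively} from assignments where all $n$ points lie on \emph{exactly two} of the $n$ facets through $v$. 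The paper states this explicitly after~\eqref{eq:I_v^1}: ``the main contribution comes from those facets of $P_N$ whose vertices are on precisely two facets of~$P$.'' So the $J(N)$ you write down, built solely from bijective facet assignments, would produce $\E f_{n-1}(P_N)=O(1)$, off from the truth by a full factor $(\ln N)^{n-2}$. The codimension-$(n-2)$ vanishing set of $v_\partial(C(\omega))$ that you correctly identify does produce the $(\ln N)^{n-2}$, but only for the two-facet assignments; the multiplicity vector $\bm m$ shows up as the powers in the integrand and cannot be fixed to $(1,\dots,1)$ or integrated out before the stratified analysis.

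A minor further point: your treatment of the flat facets is fine (at most one per facet of $P$, so $O(1)$), but note that for $n=2$ the claimed theorem is about a constant order, so absorbing $O(1)$ into the error requires $n\geq 3$; for $n=2$ the flat facets are of the same order as the main term and the argument needs a separate word.
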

The case $n=2$ is particularly simple. $\E f_{1}(P_N)$ is  asymptotically, as $N \to \infty$, equal to $2  f_0( P) = 2 f_1(P)$.

Note that for a simplicial polytope $\operatorname{flag}(P) = n! f_0(P)$ and therefore Theorem \ref{th:EfN} can also be written as 
$$\E f_{n-1}(P_N)  
=
\frac{c_{n}}{n!} \operatorname{flag}(P) (\ln N)^{n-2} (1+O((\ln N)^{-1})) $$
We conjecture this formula to hold for general polytopes.
Yet this seems to be much more involved. We are showing here that for $1 \leq \ell \leq n-2$
$$
\E f_{\ell}(P_N)  
\geq
c_{n-1,\ell} \operatorname{flag}(P) (\ln N)^{n-2} (1+o(1))
$$
with $ c_{n-1,\ell}$ defined in \eqref{eq:PdEfl}.
For this we count those $\ell$-dimensional faces which are contained in the facets $F_i$ of $P$. Analogous to the case $\ell =0$ we have 
\begin{align*}
\E f_{\ell}(P_N)  
& \geq
\sum_{F_i} \E f_{\ell}(P_N \cap F_i)  
\\ & =
c_{n-1,\ell} \sum_{F_i} \operatorname{flag}(F_i) \E (\ln_+ N_i)^{n-2}(1+o(1))
\\ & =
c_{n-1,\ell} \operatorname{flag}(P) (\ln N)^{n-2} (1+o(1)). 
\end{align*}
For the case $\ell=n-1$ observe that each $(n-2)$-dimensional face is contained in precisely two $(n-1)$-dimensional facets. Thus each $(n-2)$-dimensional face of $P_N$ in a facet of $P$ gives rise to a second facet of $P_N$ which is the convex hull with one random point in another facet of $P$. This shows 
\begin{align*}
\E f_{n-1}(P_N)  
& \geq
\sum_{F_i} \E f_{n-2}(P_N \cap F_i)  
\\ & =
c_{n-1,n-2} \operatorname{flag}(P) (\ln N)^{n-2} (1+o(1))
\end{align*}
for general polytopes $P$. 

This sheds some light on the geometry of $P_N$. The number of those facets of the random polytope 
that are not contained in the boundary of $P$ are  already of the same order as all facets
that have one vertex in one facet of $P$ and all  the others 
in another one. In fact it follows from our proof that for simple polytopes the main contribution comes from those facets of $P_N$ whose vertices are on \emph{precisely two facets} of $P$.
We refer to  the end of Section \ref{ssec:mainterm} for the details.

Surprisingly this is no longer true for the expectation of the volume difference. Here the main contribution comes from \emph{all facets} of $P_N$.
And -- to our big surprise -- the volume difference contains no logarithmic factor. This is in sharp contrast to the results for random points inside convex sets and shows that the phenomenon mentioned above does not hold for more general random polytopes.
\begin{theorem}\label{th:Vol}
For the expected volume difference between a simple polytope $P \subset \R^n$ and the random polytope $P_N$ with vertices chosen from the boundary of $P$, we have
$$
\E (V_n(P)-V_n(P_N)) =
c_{n,P} N^{- \frac n{n-1}} (1+ O(N^{- \frac 1{(n-1)(n-2)}}))
$$ 
with some $c_{n,P}>0$ depending on $n$ and $P$.
\end{theorem}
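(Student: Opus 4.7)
The plan is to use the identity
$$\E(V_n(P) - V_n(P_N)) = \int_P \P(x \notin P_N)\, dx$$
and show that the dominant contribution comes from small caps of scale $N^{-1/(n-1)}$ near each vertex of $P$. For $x$ in the interior of $P$, the event $\{x \notin P_N\}$ forces a halfspace through $x$ to avoid every random point, so its probability is approximately $(1 - |C(x)|/|\partial P|)^N$, where $|C(x)|$ is the $(n-1)$-area of the ``economic cap'' cut off on $\partial P$ by the optimal such halfspace. A direct computation in local coordinates shows that for $x$ at distance $r$ from an $(n-k)$-dimensional face of $P$ (and bounded away from higher-codimension faces), $|C(x)| = \Theta(r^{k-1})$; integrating over a $k$-dimensional tube around such a face, the contribution to the volume integral is of order $N^{-k/(k-1)}$. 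This quantity is increasing in $k$, so the leading term ($k = n$, vertices) is $\Theta(N^{-n/(n-1)})$, while the largest sub-leading term ($k = n-1$, one-dimensional edges) is $\Theta(N^{-(n-1)/(n-2)})$, exactly matching the relative remainder $N^{-1/((n-1)(n-2))}$ claimed in the theorem.

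To make this rigorous, fix $h_N$ with $N^{-1/(n-1)} \ll h_N \ll 1$ and, for each vertex $v$ of $P$, set $C_v = P \cap B(v, h_N)$. Since $P$ is simple, the $n$ facets meeting at $v$ span a simplicial cone, the caps $C_v$ are pairwise disjoint for $h_N$ small, and $P \cap C_v$ coincides with a truncated simplicial cone at $v$. Let $T_v$ be the affine map centered at $v$ sending the $n$ edges at $v$ to the positive coordinate axes, so that $T_v(C_v) \subset \R^n_{\ge 0}$ and each facet $F_i^{(v)}$ at $v$ corresponds to a piece of the coordinate hyperplane $\{x_i = 0\}$ carrying a uniform measure proportional to $1/|\partial P|$ (with an explicit Jacobian factor $c_i^{(v)}$). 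Rescale by $\lambda_N = (|\partial P|/N)^{1/(n-1)}$: after rescaling, the random point cloud in $C_v$ has intensity of order one on each coordinate hyperplane piece, and by a standard Poissonization the rescaled point set converges in distribution to a Poisson point process $\Pi_v$ on $\bigcup_i(\{x_i = 0\} \cap \R^n_{\ge 0})$ with intensities $1/c_i^{(v)}$.

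The expected missing volume inside $C_v$ thus equals
$$\lambda_N^n \cdot \frac{E_v}{|\det T_v|} \cdot (1 + \text{err}), \qquad E_v := \int_{\R^n_{\ge 0}} \P\big(y \notin \mathrm{conv}(\Pi_v)\big)\, dy,$$
and $E_v$ is a finite positive constant because the cap bound gives $\P(y \notin \mathrm{conv}(\Pi_v)) \lesssim \exp(-c\|y\|^{n-1})$ at large $\|y\|$. Summing over vertices and substituting $\lambda_N^n = |\partial P|^{n/(n-1)} N^{-n/(n-1)}$ produces the leading term with $c_{n,P} = |\partial P|^{n/(n-1)} \sum_v E_v/|\det T_v| > 0$.

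The main obstacle is obtaining the sharp remainder $O(N^{-1/((n-1)(n-2))})$. Exponential errors (the bulk integral outside $\bigcup_v C_v$, the truncation of the rescaled cap to $\R^n_{\ge 0}$, and the binomial-versus-Poisson step) are all absorbed for $h_N$ in the admissible window $N^{-1/(n-1)} \ll h_N \ll 1$. The algebraic part of the remainder comes from integrating $\P(x \notin P_N)$ over tubes around the $1$-dimensional edges of $P$ lying outside the vertex caps; by the scaling of the first paragraph this contributes an amount of exact order $N^{-(n-1)/(n-2)}$, matching the claimed error. An optimal choice of $h_N$ then absorbs the exponential errors together with the sub-leading contributions from all faces of codimension $2, 3, \dots, n-2$ into this dominant edge contribution, yielding the stated bound.
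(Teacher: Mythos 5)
Your proposal takes a genuinely different route from the paper. The paper decomposes $P\setminus P_N$ into a union $C_N$ of simplices $[F,v]$ indexed by facets $F$ of $P_N$ and the corresponding vertex $v$ whose normal cone contains $u_F$, plus a remainder $D_N$; it then evaluates $\E V_n(C_N)$ via Z\"ahle's Blaschke--Petkantschin formula and the explicit asymptotic expansion of the integral $\cJ(\bm l)$ (Lemma~\ref{le:main-asymp-exp1}), and bounds $\E V_n(D_N)$ separately by $O(N^{-(n-1)/(n-2)})$. You instead start from $\E(V_n(P)-V_n(P_N))=\int_P \P(x\notin P_N)\,dx$, localize to vertex caps of a mesoscopic scale $h_N$, rescale by $\lambda_N=(|\partial P|/N)^{1/(n-1)}$, and read off the leading term as $\lambda_N^n$ times the integral $E_v$ of the non-inclusion probability for the limiting Poisson process on the boundary of the tangent cone. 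This is the ``scaling limit / stabilization'' viewpoint rather than the integral-geometric one. What each buys: the paper's route produces the constant in a form amenable to explicit computation and comes with self-contained quantitative control via the incomplete-Gamma estimates in the proof of Lemma~\ref{le:main-asymp-exp1}; your route gives a cleaner conceptual picture of where the $N^{-n/(n-1)}$ comes from (a limiting Poisson constant per vertex) and makes transparent why the exponent differs from the interior-point case. Your identification of the edge tubes as the source of the $N^{-(n-1)/(n-2)}$ subleading term is also correct and matches the paper's $D_N$ estimate \eqref{eq:F-estimate} and the error $E_v^2$ in \eqref{eq:E_v^2}.

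The gap is in the quantitative step, and it is not small. To obtain the stated relative remainder $O(N^{-1/((n-1)(n-2))})$ you need explicit rates for (i) the binomial-to-Poisson comparison, (ii) the convergence of the rescaled intensity to Lebesgue measure on the coordinate hyperplanes, (iii) the truncation from the rescaled cap $(C_v-v)/\lambda_N$ to the full cone $\R^n_{\ge 0}$, and (iv) the ``stabilization'' claim that whether $x\in C_v$ lies in $P_N$ depends only on the points landing near $v$. You assert these are ``all absorbed'' by a suitable choice of $h_N$, but you do not produce the bounds, and the exponent $1/((n-1)(n-2))$ is very small for large $n$, so crude estimates will not suffice. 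In particular, the de-Poissonization step is typically controlled only to relative accuracy $N^{-1/2}$, which for $n=3$ is borderline equal to the target and for $n\ge 4$ works, but this needs to be stated; and the contributions from the annulus between the vertex caps and the edge tubes, and from faces of intermediate codimension, need to be bounded uniformly, not just heuristically via the $\Theta(r^{k-1})$ cap scaling. Moreover, the one-sided approximation $\P(x\notin P_N)\approx(1-|C(x)|/|\partial P|)^N$ using the economic cap is only a lower bound; the matching upper bound requires either an economic cap covering or the Poisson convergence itself, and must be set up so that both sides yield the same constant $E_v$ to relative accuracy $N^{-1/((n-1)(n-2))}$. So the plan is sound and would plausibly recover the theorem, but as written it is a program rather than a proof: the quantitative lemmas replacing the paper's Lemma~\ref{le:main-asymp-exp1} and Section~\ref{ssec:error2} still have to be supplied.
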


Intuitively, the difference volume for a random polytope whose vertices
are chosen from the boundary should be smaller than the one whose
vertices are chosen from the body. Our result confirms this for $N$ sufficiently large. The first one is of the order $N^{-\frac{n}{n-1}}$ compared to $N^{-1} (\ln N)^{n-1}$.
It is well known that for uniform random polytopes in the interior of a convex set the expected missed volume is minimized for the ball for $N$ large \cite{Bl2, Gr1, Gr2}, a smooth convex set, and -- in the planar case -- maximized by a triangle \cite{Bl2, DL, Gi} or more generally by polytopes \cite{BL}. Hence one should also compare the result of Theorem \ref{th:Vol} to the result of choosing random points on the boundary of a smooth convex set. This  clearly leads to a random polytope with $N$ vertices. And by results of Sch\"utt and Werner \cite{SW5}, see also Reitzner \cite{Re4}, the expected volume difference is of order 
$N^{- \frac 2{n-1}}$ which is smaller as the order in \eqref{eq:EVi} as is to be expected, but also surprisingly much bigger than the order $N^{- \frac {n}{n-1}} $ occurring in Theorem \ref{th:Vol}.

We give a simple argument that shows that the volume difference
between the cube and a random polytope is at least of the order
$N^{-\frac{n}{n-1}}$. We denote by $e_1, \dots, e_n$ the unit vectors of the standard orthonormal basis in $\mathbb{R}^n$. We consider the cube $C^{n}=[0,1]^{n}$
and the subset of the boundary
\begin{equation}\label{eq:simpl-cube}
\partial C^{n}\cap H_{+}\left(\left(\frac{(n-1)!}{n N}\right)^{\frac{1}{n-1}}, (1,\dots,1)\right)
=\bigcup_{i=1}^{n}\left(\frac{(n-1)!}{n N}\right)^{\frac{1}{n-1}}
[0,e_{1},\dots,e_{i-1},e_{i+1},\dots,e_{n}],
\end{equation}
where $H_+(h,u)=\{ x\colon \langle x,u \rangle \geq h \}$.
These sets  are the union of small simplices in the facets of the cube close to the vertices. Then 
$$
\frac{1}{N}=\l_{n-1} \left(\bigcup_{i=1}^{n}\left(\frac{(n-1)!}{n N}\right)^{\frac{1}{n-1}}
[0,e_{1},\dots,e_{i-1},e_{i+1},\dots,e_{n}]\right),
$$
where $\l_{n-1}$ denotes the $(n-1)$-dimensional Hausdorff measure,  and the probability
that none of the points $x_{1},\dots,x_{N}$ is chosen from this set
equals
$$
\left(1-\frac{1}{N}\right)^{N}\sim \frac{1}{e}.
$$
Therefore, with probability approximately $\frac{1}{e}$
the union of the simplices in \eqref{eq:simpl-cube}
is not contained in the random polytope and the difference volume is greater than
$$
\frac{1}{n!}\left(\frac{(n-1)!}{n N}\right)^{\frac{n}{n-1}}
\sim \frac{1}{n}N^{-\frac{n}{n-1}} 
$$
which is in accordance with Theorem \ref{th:Vol}.

\bigskip
The paper is organized in the following way. The next section contains a tool from integral geometry and two asymptotic expansions. The proof of the asymptotic expansions is rather technical and shifted to the end of the paper, 
Appendix \ref{sec:useful-sub}, \ref{sec:main-asymp-exp1} and \ref{sec:main-asymp-exp2}. The third section is devoted to the proofs of Theorem \ref{th:EfN} and Theorem \ref{th:Vol}. There, first we evaluate two formulas for the quantities appearing in Theorems  \ref{th:EfN} and  \ref{th:Vol} and combine them with the necessary asymptotic results. These results are proven in in Sections \ref{ssec:mainterm}--\ref{ssec:error2}, using computations for the moments of the volume of involved random simplices in Section \ref{ssec:rd-simpl}.

Throughout this paper $c_n, c_{\bm m, P, n, \dots}, \ldots$ are generic constants depending on $\bm m$, $P$, $n$, etc. whose precise values may differ from occurrence to occurrence.

\section{Tools }\label{sec:tools} 
We work in the Euclidean space $\R^n$ with inner product $\langle \cdot, \cdot \rangle$ and norm $\| \cdot \|$. We write $H=H(h,u)$ for the hyperplane with unit normal vector $u \in S^{n-1}$ and signed distance $h$ to the origin, 
$H(h,u)= \{x\colon \langle x,u \rangle =h  \} .$ We denote by $H_-=H_- (h,u)=\{ x\colon \langle x,u \rangle \leq h \}$  and by $H_+ = H_+(h,u)=\{ x\colon \langle x,u \rangle \geq h \}$ the
 the  two closed halfspaces bounded by the hyperplane $H$.
For a set $A \subset \R^n$ we write $[A]$ for the convex hull of $A$.

\medskip
In this paper we need a formula for $n$ points distributed on the boundary of a given convex body. A theorem which involves such a formula of Blaschke-Petkantschin type as a special case has been proved by Z\"ahle \cite{Zae}, Theorem 1. We  state it  here only for a $(d-1)$-dimensional measurable set $X$,  which is what we need in the following. 
Measurability is with respect to the $(d-1)$-dimensional Hausdorff measure.
For two hyperplanes $H_1, H_2$ let $ J(H_1, H_2)$ be the length of the projection of a unit interval in $H_1 \cap (H_1 \cap H_2)^\perp $ onto $ H_2^\perp $, or $J(H_1, H_2)=0$ if $H_1 \parallel H_2$.
Observe that $J(H(h_1, u_1), H(h_2, u_2))$ is just the length of the projection of $u_2$ onto 
$H_1$, which equals $\sin \sphericalangle (u_1, u_2)$.

\medskip 
\begin{theorem}[Z\"ahle \cite{Zae}]
Let $X \subset \R^n$ be a $(d-1)$-dimensional measurable set, and let $g: (\R^n)^{n-1} \to [0, \infty) $ be a measurable function. 
Then there is a constant $\b$ such that 
\begin{align*}
\int \limits_{S^{n-1}\times \R} \ 
&
\int \limits_{X \cap H} \!\cdots\! \int \limits_{ X \cap H} 
\I(x_1, \dots, x_{n} \mbox{ in general position}) g(x_1,\dots, x_n) 
 \, d x_1  \cdots d x_n  \, dh \, du
\\[4ex]  &=
\frac{\b}{(n-1)!}  
\int \limits_{X} 
\cdots \int \limits_{X}
\I (x_1, \dots, x_n \ \mbox{ in general position})  g(x_1,\dots, x_n) 
\\[0ex]  & \hskip3.5cm
\l_{n-1} ( [x_1, \dots , x_n])^{-1}
\prod\limits_{j=1}^n J (T_{x_j},H(x_1, \dots, x_n))
\, d x_1 \cdots d x_n   
\end{align*}
with
$dx, \, du,\ dh$ denoting integration with respect to the Hausdorff measure on the respective range of integration, and where $T_{x}$ denots the (generalized) tangent hyperplane at $x$ to $X$, and the hyperplane $H(x_1, \dots, x_n)$ is the affine hull of $x_1, \dots, x_n$. 
\end{theorem}

In our case $X$ is the boundary of a polytope $P$ and thus $J (T_{x_j},H(x_1, \dots, x_n))=0$ if all points are on the same facet of $P$.
To exclude this from the range of integration, we write $(\bd P)^n_{\neq}$ for the set of all $n$-tuples $ x_1, \dots, x_n \in \bd P$ which are not all contained in the same facet. Also, ignoring sets of measure zero,  we may assume that $ x_1, \dots, x_n $ are in general position when integrating with respect to Hausdorff measure on $(\bd P)^n_{\neq}$. And, again ignoring sets of measure zero, a hyperplane $H(h,u)$ meets $\bd P$ at least in $d$ facets, or $\bd P \cap H(h,u)=\emptyset$.

\medskip 
\begin{lemma}
Let $g(x_1, \dots, x_n)$ be a continuous function. 
Then there is a constant $\b$ such that 
\begin{align}\label{eq:Zaehle}
\int \limits_{(\bd P)^n_{\neq} } 
& 
g(x_1,\dots, x_n) \, d x_1 \cdots d x_n  
\\  =& \nonumber
\b^{-1} (n-1)!  
\int \limits_{S^{n-1}} \! \int \limits_{\R}  \ 
\int \limits_{(\bd P \cap H)^n_{\neq}} 
g(x_1,\dots, x_n) 
\l_{n-1} ( [x_1, \dots , x_n])  \prod\limits_{j=1}^n J (T_{x_j},H)^{-1}  \, d x_1  \cdots d x_n  \, dh \, du
\end{align}
with
$dx, \, du,\ dh$ denoting integration with respect to the Hausdorff measure on the respective range of integration.
\end{lemma}

\bigskip
One of the essential ingredients of our proof are two asymptotic expansions of the function 
\begin{eqnarray}\label{GrossJ}
\cJ(\bm l)
&=&
\int \limits_0^1 \dots  \int\limits_0^{1} 
\left(1- \a \sum_{i=1}^{n} \prod_{j \neq i} t_j \right)^{N-n} 
\prod_{i=1}^n t_i^{n-2-l_i}  dt_1 \dots dt_n   
\end{eqnarray}
of $\bm l=(l_{1},\dots,l_{n})$
as $N \to \infty$. Here,  $l_i \in \mathbb{R}$, $l_i >0$ for all $i$ and $\alpha \in \mathbb{R}$, $\alpha >0$.
We need it for the computation of the expectations of the number of facets and of the expected volume difference. The proof of these results is rather technical and lengthy, and will be found in Section~\ref{sec:useful-sub},  Section~\ref{sec:main-asymp-exp1} and in Section~\ref{sec:main-asymp-exp2}.

\begin{lemma}\label{le:main-asymp-exp1}
Assume that $n \geq 2$, $0 < \a < \frac{1}{n}$ and that $\bm l=(l_1, \dots, l_n)$, $L=\sum_{i=1}^n l_i $, with $ n-1 > l_{i} > \frac {L}{n-1} -1 $  for all $i=1, \dots, n$. 
Then 
$$
\cJ(\bm l)
=
\a^{-n + \frac{L}{n-1} }  (n-1)^{-1} 
\left(\prod_{i=1}^n \Gamma \left( l_{i} -  \frac {L}{n-1} +1 \right) \right)
\ N ^{-n+\frac{L}{n-1} } 
\left(1+ O\left(N^{-\frac 1{n-2}(\min_k l_k- \frac L{n-1} +1)} \right) \right)
$$
as $N \to \infty$, where the implicit constant in $O(\cdot)$ may depend on $\a$.
\end{lemma}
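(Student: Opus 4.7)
The plan is to rescale $t_i \mapsto N^{-1/(n-1)} u_i$, identify the limit integral via a multiplicative change of variables, and then quantify the error. First, substituting $t_i = N^{-1/(n-1)} u_i$: since each product $\prod_{j \neq i} t_j$ contains exactly $n-1$ of the variables, this gives $\a \sum_i \prod_{j\neq i} t_j = \frac{\a}{N}\, e_{n-1}(u)$, where $e_{n-1}(u) := \sum_i \prod_{j \neq i} u_j$ is the elementary symmetric polynomial of degree $n-1$. Combining the Jacobian $\prod_i dt_i = N^{-n/(n-1)} \prod_i du_i$ with the weight $\prod_i t_i^{n-2-l_i} = N^{-(n(n-2)-L)/(n-1)} \prod_i u_i^{n-2-l_i}$ produces the prefactor $N^{-n+L/(n-1)}$, giving
\begin{equation*}
\cJ(\bm l) = N^{-n+L/(n-1)} \int_{[0,N^{1/(n-1)}]^n} \left(1 - \frac{\a}{N}\, e_{n-1}(u)\right)^{\!N-n} \prod_{i=1}^n u_i^{n-2-l_i}\, du.
\end{equation*}
As $N \to \infty$ the integrand converges pointwise on $(0,\infty)^n$ to $\exp(-\a\, e_{n-1}(u)) \prod_i u_i^{n-2-l_i}$, motivating the study of $I(\a) := \int_{(0,\infty)^n} \exp(-\a\, e_{n-1}(u)) \prod_i u_i^{n-2-l_i}\, du$, and the rescaling $u_i \mapsto \a^{-1/(n-1)} u_i$ extracts $I(\a) = \a^{-n+L/(n-1)} I(1)$.

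To evaluate $I(1)$, I would perform the multiplicative substitution $v_i = \prod_{j\neq i} u_j$, which is a diffeomorphism of $(0,\infty)^n$ onto itself: setting $P := \prod_j u_j$, we have $v_i = P/u_i$, hence $P = (\prod_j v_j)^{1/(n-1)}$ and $u_i = P/v_i$. The Jacobian matrix has entries $\partial v_i/\partial u_k = v_i/u_k$ for $i \neq k$ and $0$ on the diagonal; factoring it as $\mathrm{diag}(v_i)\,(J-I)\,\mathrm{diag}(u_k^{-1})$ with $J$ the all-ones matrix (so $J-I$ has eigenvalues $n-1$ and $-1$ of multiplicity $n-1$) yields $|\det \partial v/\partial u| = (n-1)(\prod_j v_j)^{(n-2)/(n-1)}$. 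Tracking all exponents one finds $\prod_i u_i^{n-2-l_i}\, du = \frac{1}{n-1} \prod_i v_i^{\,l_i - L/(n-1)}\, dv$, and since $\sum_i \prod_{j\neq i} u_j = \sum_i v_i$, the integral decouples:
\begin{equation*}
I(1) = \frac{1}{n-1}\prod_{i=1}^n \int_0^\infty e^{-v_i}\, v_i^{\,l_i - L/(n-1)}\, dv_i = \frac{1}{n-1}\prod_{i=1}^n \Gamma\!\left(l_i - \frac{L}{n-1}+1\right).
\end{equation*}
Integrability at $v_i = 0$ is precisely the hypothesis $l_i > L/(n-1)-1$, while $l_i < n-1$ ensures $\cJ(\bm l)$ was well-defined to begin with. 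Together with the prefactors, this delivers the claimed leading term.

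The main obstacle is quantifying the remainder. Writing $m_i := l_i - L/(n-1)+1 > 0$, there are two error sources. On a core region where $e_{n-1}(u)$ is bounded, the expansion $(1-\a e_{n-1}(u)/N)^{N-n} = e^{-\a e_{n-1}(u)}(1 + O(e_{n-1}(u)^2/N))$ yields a correction of order $1/N$ relative to the main term. The decisive error comes from the cut-off $t_i \leq 1$, which in $u$-coordinates is $u_i \leq N^{1/(n-1)}$, or in $v$-coordinates $\prod_{j\neq i} v_j \leq N v_i^{n-2}$; the discarded region, where $\prod_{j\neq i} v_j > N v_i^{n-2}$ for at least one $i$, must be estimated against the limit integral. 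On this region, AM--GM forces $\max_{j\neq i} v_j > N^{1/(n-1)} v_i^{(n-2)/(n-1)}$, and the factor $e^{-v_j}$ therefore provides super-polynomial decay unless $v_i = O(N^{-1/(n-2)})$. Integrating the weight $v_i^{m_i-1}$ over $(0, N^{-1/(n-2)}]$ produces a contribution of order $N^{-m_i/(n-2)}$, and taking the minimum over $i$ yields the stated bound $O(N^{-\min_k m_k/(n-2)})$. The exponent $\tfrac{1}{n-2}$ emerges precisely from this asymmetric balance: one large coordinate $u_i$ must be compensated by $n-2$ small factors in each of the remaining products $\prod_{j\neq i'} u_j$. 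Making these estimates uniform --- including tracking the interplay between the original $t_i = 1$ boundary and the core approximation --- is the technical content that the authors defer to the appendix.
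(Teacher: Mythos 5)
Your proposal follows essentially the same route as the paper: the key step in both is the multiplicative change of variables $v_i = \prod_{j\neq i}(\cdot)$ to decouple the integral, combined with a rescaling to normalize the $N$- and $\a$-dependence, the exponential approximation $(1-x/N)^{N} \approx e^{-x}$ (with quadratic error $O(x^2/N)$), and finally an estimate of the truncation error from the cut-off hypercube, which is exactly where the exponent $\tfrac{1}{n-2}(\min_k l_k - \tfrac{L}{n-1}+1)$ originates. The only cosmetic differences are the order of the two substitutions (you rescale first and then decouple, the paper decouples via Lemma~\ref{le:substitute1} and then rescales $v_i = \tfrac{1}{\a(N-n)}s_i$) and your cleaner Jacobian computation via the factorization $\mathrm{diag}(v_i)(J-I)\,\mathrm{diag}(u_k^{-1})$, whereas the paper evaluates the determinant directly from formula~\eqref{DetJ1}; you are also slightly less detailed on the uniformity of the cut-off estimate, which the paper handles by splitting $D_N$ into the sets $D_{N,k}$ and $D_{N,k}'$ and bounding each separately, but you correctly identify both the mechanism and the resulting exponent.
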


\begin{lemma}\label{le:main-asymp-exp2}
Assume that $n \geq 2$, $0 < \a \leq \frac 1 {2n}$ and $\bm l=(l_1, \dots, l_n)$, $L=\sum_{i=1}^n l_i $, 
with $ n-1 > l_{i} \geq \frac {L}{n-1} -1 $ for all $i=1, \dots, n$. If for at least three different indices $i, j,k$ 
we have the strict inequality that $ l_{i}, l_j, l_k >  \frac {L}{n-1} -1 $, 
then 
$$
\cJ(\bm l)
=
O\left(  N^{-n+ \frac L{n-1}} (\ln N)^{n-3} \right)
$$
as $N \to \infty$, where the implicit constant in $O(\cdot)$ may depend on $\a$.
If for exactly two different indices $i, j$ we have the strict inequality that $ l_{i}, l_j >  \frac {L}{n-1} -1 $ and equality $l_k = \frac {L}{n-1} -1 $ for all other $l_k$, 
then 
$$
\cJ(\bm l)
=
c_n \a^{-n+\frac L{n-1}} \Gamma(l_i- \frac L{n-1} +1)\Gamma(l_j- \frac L{n-1} +1)  N^{-n+ \frac L{n-1}} (\ln N)^{n-2} (1+O((\ln N)^{-1})) 
$$
as $N \to \infty$ with $c_n>0$, where the implicit constant in $O(\cdot)$ may depend on $\a$.
\end{lemma}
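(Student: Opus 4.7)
The plan is to follow the same strategy as Lemma~\ref{le:main-asymp-exp1} but to account for the extra logarithms that appear when one or more of the $l_k$ attain the critical value $\frac{L}{n-1}-1$. After the substitution $t_i = u_i N^{-1/(n-1)}$ one obtains
$$
\cJ(\bm l) = N^{-n+\frac{L}{n-1}}\int_{[0,N^{1/(n-1)}]^n}\left(1-\frac{\a}{N}\sum_{i=1}^n\prod_{j\neq i}u_j\right)^{N-n}\prod_{i=1}^n u_i^{n-2-l_i}\,du.
$$
In the bulk where all $u_i$ are of order one, the factor $(1-\a\sigma/N)^{N-n}$ can be replaced by $e^{-\a\sigma}$ with exponentially small error, as in the proof of Lemma~\ref{le:main-asymp-exp1}. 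If all $l_k>\frac{L}{n-1}-1$ strictly, the resulting limit integral over $(0,\infty)^n$ is absolutely convergent and produces the product of Gamma factors appearing in Lemma~\ref{le:main-asymp-exp1}. The boundary cases differ only in that this limit integral is no longer integrable in the $u_k$-directions with $l_k = \frac{L}{n-1}-1$; the cutoff $N^{1/(n-1)}$ must then be retained in each such direction, and each one will contribute an additional factor of $\ln N$.

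For case~2 (exactly two strict inequalities, at indices $i$ and $j$, say) I would first integrate out the two strict variables $u_i,u_j$. In the dominant region, where $u_i,u_j$ remain of unit order while the $n-2$ remaining variables $u_k$ are allowed to grow, the kernel $\sum_m \prod_{m'\neq m} u_{m'}$ is controlled by the monomial containing all the large $u_k$. This localizes the $u_i,u_j$ integrations and produces $\Gamma(l_i-\tfrac{L}{n-1}+1)\Gamma(l_j-\tfrac{L}{n-1}+1)$ times an explicit expression in the remaining $u_k$. The iterated integral of that expression over $u_k\in[0,N^{1/(n-1)}]$ has each slice of the form $\int_0^{N^{1/(n-1)}} u_k^{n-2-l_k}\,e^{-\a c u_k(\cdots)}\,du_k$ with $n-2-l_k$ at the critical value $n-1-\tfrac{L}{n-1}$. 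Performing these integrations recursively one meets exponential-integral asymptotics of the type $\int_0^{M}e^{s}E_1(s)\,ds = \ln M + O(1)$, which, truncated at $M$ polynomial in $N^{1/(n-1)}$, contribute exactly one factor $\ln N$ per equality variable. This yields the advertised $(\ln N)^{n-2}$ together with a strictly positive constant $c_n$.

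Case~1 is handled by the same method; however, since at least three of the variables now supply genuine Gamma integrals in place of logarithms, at most $n-3$ of the equality variables can contribute a logarithm, which gives the claimed upper bound $O(N^{-n+L/(n-1)}(\ln N)^{n-3})$. For this statement one only needs an upper estimate $O(\ln N)$ on each equality integration, without computing its constant.

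The main obstacle is that the kernel $\exp(-\a\sum_i\prod_{j\neq i}u_j)$ does not factor as a product over the $u_i$, so the iterated integrations pick up cross-terms. This forces a careful partition of $[0,N^{1/(n-1)}]^n$ according to which subsets of variables are bounded and which are large, together with a verification that only the region where the two strict variables remain of unit order while the equality variables sweep through the full cutoff contributes to leading order. Controlling the relative error $O((\ln N)^{-1})$ in case~2 then requires pushing the exponential-integral expansions one order further.
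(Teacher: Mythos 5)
Your plan differs substantially from the paper's, and it stops exactly where the real work begins. The paper's engine is the substitution $v_j = \prod_{i\neq j} t_i$ (Lemmata~\ref{SetIntegration1}, \ref{le:substitute1}, \ref{le:substitute2}), whose point is precisely to turn the non-factorizing kernel $\bigl(1-\a\sum_i\prod_{j\neq i}t_j\bigr)^{N-n}$ into the separable kernel $\bigl(1-\a\sum_i v_i\bigr)^{N-n}$ at the cost of a Jacobian and a constraint on the domain. After a further rescaling $v_i=s_i/(\a(N-n))$ and ordering $s_n\le\cdots\le s_1$ (summing over permutations), the iterated integrals in Lemmata~\ref{le:crit-int} and~\ref{le:non-crit-int} are then genuinely one-variable, and each critical exponent $q_i=-1$ contributes exactly one logarithm via $\int s^{-1}\,ds$. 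Your proposal stays in the original coordinates, and you yourself identify the consequence: "the kernel $\exp\bigl(-\a\sum_i\prod_{j\neq i}u_j\bigr)$ does not factor as a product over the $u_i$." That observation is the central difficulty, and your answer to it — "a careful partition of $[0,N^{1/(n-1)}]^n$ according to which subsets of variables are bounded and which are large, together with a verification that only the region where the two strict variables remain of unit order\ldots contributes to leading order" — is not an argument but a restatement of what would need to be proved. Without it the proposal does not establish either the upper bound in case~1 or the asymptotics in case~2.

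Two further points are not addressed. First, the positivity of $c_n$ in case~2 is a separate claim that requires a matching lower bound (the paper supplies Lemma~\ref{OrderMag1} for exactly this purpose); your exponential-integral heuristic does not obviously yield a strictly positive constant after the unresolved regional analysis, since a priori the contributions from different regions could cancel. Second, the relative error $O((\ln N)^{-1})$ is dismissed with "requires pushing the exponential-integral expansions one order further," which again defers the estimate rather than proving it. Incidentally, the Gamma factors you obtain from the $u$-variable scaling naively come out as $\Gamma(n-1-l_i)\Gamma(n-1-l_j)$ rather than $\Gamma(l_i-\tfrac{L}{n-1}+1)\Gamma(l_j-\tfrac{L}{n-1}+1)$; these happen to agree when $l_k=\tfrac{L}{n-1}-1$ for all $k\neq i,j$, because then $l_i-\tfrac{L}{n-1}+1=n-1-l_j$, but you should verify this identity explicitly rather than asserting that the correct Gamma factors appear. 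In short: a different route is conceivable along the lines you sketch, but the proposal as written identifies the obstacles without overcoming them, whereas the paper's substitution removes the obstacle at the outset.
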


\section{Proof of Theorem \ref{th:EfN} and Theorem \ref{th:Vol} }\label{sec:proof}


\subsection{The number of facets}\label{ssec:idea-facet}
Let $P \subset \R^n$ be a simple polytope, and assume w.l.o.g. that the surface area satisfies $\l_{n-1} ( \bd P)=1$. As usual denote by $\cF_k(P)$ the set of $k$-dimensional faces of $P$. Choose random points $X_1, \dots , X_N$ on the boundary of $P$ with respect to Lebesgue measure, and denote by $P_N=[X_1, \dots, X_N]$  their convex hull. In general $\mathcal{F}_{n-1}(P_N)$ consists of facets contained in facets of $P$ and facets which are formed by random points on different facets of $P$. The latter  facets are simplices, almost surely.
The number of facets contained in $\partial P$ is bounded by the number of facets of $P$ and thus by a constant. 
Hence we assume in the following that $(X_1, \dots, X_n) \in (\bd P)_{\neq}^n$.
The convex hull of such points $X_{i}$, $i \in I=\{i_1, \dots, i_n\}$ forms a facet $[X_{i_1}, \dots, X_{i_n}]$ of $P_N$ if their affine hull does not intersect the convex hull of the remaining points $[\{X_{j}\}_{j \notin I}]$. 
\begin{eqnarray*}
\E f_{n-1}(P_N)
&=& 
\E\, \sum_{I \subset \{ 1, \dots, N\}, |I|=n} \I  \left( \aff [\{ X_i \}_{i \in I}]  \cap [\{X_j \}_{j \notin I} ] = \emptyset,\, \{ X_i \}_{i \in I} \in (\bd P)^n_{\neq} \right) +O(1)
\\ &=& 
\binom N n \  \E\,  \I  \left( \aff [X_1, \dots, X_n]  \cap [X_{n+1}, \dots, X_N] = \emptyset,\, \{ X_i \}_{i \leq n} \in (\bd P)^n_{\neq} \right) +O(1)
\end{eqnarray*}
To simplify notation we set $H= \aff [X_1, \dots, X_n]$.
If the points $X_1, \dots, X_n$ form a facet then their affine hull is a supporting hyperplane $H=H(h,u)$ of the random polytope $P_N$. 
The unit vector $u$ is the unit outer normal vector of this facet. Then the halfspace $H_-=H_-(h,u)=\{ x\colon \langle x,u \rangle \leq h \}$ bounded by the hyperplane $H$ contains the random polytope $P_N$. The probability that $X_{n+1}, \dots, X_N$  are contained in $H_- $ equals 
$$ \l_{n-1}(\bd P \cap H_- )^{N-n} = (1-\l_{n-1}( \bd P \cap H_+ ))^{N-n} ,$$
thus 
\begin{eqnarray*}
\E f_{n-1}(P_N)
&=& 
\binom N n  \E\Big( (1- \l_{n-1}( \bd P \cap H_+ ))^{N-n} \I(\{ X_i \}_{i \leq n} \in (\bd P)^n_{\neq}) \Big) + O(1).
\end{eqnarray*}
Denote by $H(P,u)$ a support hyperplane with normal $u$,  supporting $P$ in $v \in \cF_0(P)$.  Then the normal cone 
$\cN(v,P)$ is defined as (see e.g., \cite{SchneiderBook}), 
$$
\cN(v,P) = \{u \in \mathbb{R}^n \setminus \{0\}: v \in H(P,u) \cup \{0\} \}.
$$
With probability one the vector $u$ is contained in the interior of one of the normal cones $\cN(v,P)$ 
of the 
vertices $v \in \cF_0(P)$ of $P$ because the boundaries of the normal cones have measure $0$. Hence
\begin{align*}
\E f_{n-1}(P_N)
=& 
\sum_{v \in \cF_0(P)} \binom N n  \E\Big( (1- \l_{n-1}( \bd P \cap H_+ ))^{N-n} \I  ( u \in \cN(v, P),\, \{ X_i \}_{i \leq n} \in (\bd P)^n_{\neq}) \Big)
+O(1)
\\ =& 
\sum_{v \in \cF_0(P)}   \binom N n  \idotsint\limits_{(\bd P)_{\neq}^n} 
(1- \l_{n-1}(  \bd P \cap H_+ ))^{N-n} \I  ( u \in \cN(v, P))  \, d x_1 \dots d x_n
+O(1) .
\end{align*}
Now we fix a vertex $v$. Since $P$ is simple, $v$ is contained in precisely $n$ facets $F_1, \dots, F_n$. 
There is an affine transformation $A_v$ which maps $v$ to the origin and the $n$ edges $[v, v_i]$ containing $v$ onto  segments $[0, s_i e_i]$ on the coordinate axis. Here we have a free choice for the $n$ parameters $s_i>0$ which we will fix soon. We assume $s_i \geq n$, $i=1, \dots, n$, which implies
$$[0, 1]^n \subset A_v P. $$
The image measure $\l_{n-1, A_v}$ of the Lebesgue measure $\l_{n-1 }$ on the facets of $P$ under the affine transformation $A_v$ is - up to a constant  - again Lebesgue measure, where the constant may differ for different facets. We choose the $n$ parameters $s_i \geq n$ in such a way that the constant equals the same $a_v>0$ for the $n$ facets $F_1, \dots, F_n$ containing $v$,
$$
\l_{n-1} (F_i) = a_v \l_{n-1} (A_v F_i)
.$$
Note that the last condition means that for all such facets $F_i$ and all measurable 
$B \subset A_v F_i$, 
\begin{equation}\label{eq:imageaffineI}
\l_{n-1, A_v} ( B) =  \l_{n-1} (A_v^{-1} B) = a_v \l_{n-1}(B) .
\end{equation}
Note also that $[0, 1]^{n-1}  \subset A_v F_i$, $i=1, \dots,n$ and thus by (\ref{eq:imageaffineI}),
\begin{equation}\label{alpha-klein}
n=\sum_{i=1}^n \l_{n-1} \left( [0, 1]^{n-1}\right) \leq \frac{1}{a_v} \sum_{i=1}^n \l_{n-1} \left(F_i\right) \leq  \frac{1}{a_v} S(P) = \frac{1}{a_v} .
\end{equation}
Such a uniform bound on $a_v$ is needed in Subsection \ref{ssec:mainterm} so that $\alpha = \frac{a_v}{(n-1)!} \leq \frac{1}{2n}$.

To keep the notation short, we write 
$d_{A_v} x = d \l_{n-1,A_v} (x)$ 
for integration with respect to this image measure of the Lebesgue measure on 
$\bd P$ under the map $A_v$. Equation (\ref{eq:imageaffineI}) shows that for $x \in A_v F_i$,
\begin{equation}\label{eq:imageaffineII}
 d_{A_v} x = a_v dx 
\end{equation}
for $i=1, \dots , n$, where $d x$ is again shorthand for Lebesgue measure (or equivalently Hausdorff measure) on the respective facet $F_i$.
This yields
\begin{eqnarray*}
\E f_{n-1}(P_N)
&=& 
\sum_{v \in \cF_0( P)}   \binom N n  \idotsint\limits_{(\bd A_v  P)_{\neq}^n}  (1- \l_{n-1, A_v} (\bd A_v P \cap H_+ ))^{N-n} 
\\ && \hskip4cm \times 
\I  ( u \in \cN(0, A_v P)) \, d_{A_v} x_1 \dots d_{A_v} x_n
\ +O(1) .
\end{eqnarray*}
We use Z\"ahle's formula \eqref{eq:Zaehle} which transforms the integral over the points $x_i \in \bd P$ into an integral over all the hyperplanes $H=H(h,u)$, $u \in S^{n-1}$, $h \in \R$, and integrals over $\bd P \cap H$:
\begin{eqnarray*}
\E f_{n-1}(P_N)
&=& 
\sum_{v \in \cF_0( P)}   \binom N n  
\b^{-1} (n-1)!  
\int \limits_{S^{n-1}} \! \int \limits_{\R}  \ 
\idotsint\limits_{(\bd A_v  P \cap H)_{\neq}^n}  (1- \l_{n-1, A_v} (\bd A_v P \cap H_+ ))^{N-n} 
\\ &&
\l_{n-1} ( [x_1, \dots , x_n])  \prod\limits_{j=1}^n J (T_{x_j},H)^{-1}  
\I  ( u \in \cN(0, A_v P))  \, d_{A_v} x_1 \dots d_{A_v} x_n \, dh du
\\ && 
+O(1).
\end{eqnarray*} 
We have $\cN(0, A_v P)=-S_{+}^{n-1}$ where $S_+^{n-1}= S^{n-1} \cap \R_+^n$.
The condition $\I  ( u \in \cN(0, A_v P))$ will be taken into account in the range of integration in the form $ u \in - S_+^{n-1}$. 
Now we fix $u$ and split the integral into two parts. In the first one $H_- $ contains all the unit vectors $e_i$. We write this condition in the form 
$$\max_{i=1, \dots, n} u_i \leq h \leq 0 . $$
Note that $h \leq 0$, since $u \in  - S_+^{n-1}$. 
The second part is over 
$ h \leq \max_{i=1, \dots, n} u_i . $
Thus the expected number of facets is
\begin{align*}
\E f_{n-1}(P_N)  
&= 
\sum_{v \in \cF_0( P)}   \binom N n  
\b^{-1} (n-1)!  
\Bigg( 
\int \limits_{-S_+^{n-1}}   \int \limits_{\max u_i}^0  \ 
(1- \l_{n-1, A_v} ( \bd \R_+^n \cap H_+))^{N-n}  \times  
\\ & \hskip2.5cm  
\idotsint\limits_{(\bd \R_+^n \cap H)_{\neq}^n} 
\l_{n-1} ( [x_1, \dots , x_n])  \prod\limits_{j=1}^n J (T_{x_j},H)^{-1}  
\, d_{A_v} x_1 \dots d_{A_v} x_n \, dh du
\\[1ex] & \hskip4cm +
\int \limits_{-S_+^{n-1}}   \int \limits_{-\infty}^{\max u_i}
 (1- \l_{n-1, A_v} (\bd A_v P \cap H_+ ))^{N-n}   \times
\\ & \hskip2.5cm 
\idotsint\limits_{(\bd A_v  P \cap H)_{\neq}^n} 
 \l_{n-1} ( [x_1, \dots , x_n])  \prod\limits_{j=1}^n J (T_{x_j},H)^{-1}  
 \, d_{A_v} x_1 \dots d_{A_v} x_n \, dh du
\Bigg)
\\ & 
+O(1), 
\end{align*}
where we have used that in the first case $\bd  A_v P \cap H_+ =  \bd \R_+^n \cap H_+$.
The substitution $u \mapsto -u$ and $h \mapsto -h$ yields the more convenient formula 
\begin{eqnarray*}
\E f_{n-1}(P_N)  
&=& 
\sum_{v \in \cF_0( P)}   \binom N n  
\b^{-1} (n-1)!  
( I_v^1+E_v^1) \ + O(1)
\end{eqnarray*}
with 
\begin{eqnarray*}
I_v^1
&=& 
\int \limits_{S_+^{n-1}}   \int \limits_0^{\min u_i}  \ 
(1- \l_{n-1, A_v} (\bd  \R_+^n \cap H_-))^{N-n}  
\\ && \hskip3.5cm  
\idotsint\limits_{(\bd \R_+^n \cap H)_{\neq}^n} 
\l_{n-1} ( [x_1, \dots , x_n])  \prod\limits_{j=1}^n J (T_{x_j},H)^{-1}  
\, d_{A_v} x_1 \dots d_{A_v} x_n \, dh du  ,
\\[1ex] 
E_v^1
&=&
\int \limits_{S_+^{n-1}}   \int \limits_{\min u_i}^{\infty}
 (1- \l_{n-1, A_v} (\bd  A_v P \cap H_-))^{N-n}  
\\ && \hskip3.5cm 
\idotsint\limits_{(\bd A_v  P \cap H)_{\neq}^n} 
 \l_{n-1} ( [x_1, \dots , x_n])  \prod\limits_{j=1}^n J (T_{x_j},H)^{-1}  
 \, d_{A_v} x_1 \dots d_{A_v} x_n \, dh du   .
\end{eqnarray*}
The asymptotically dominating term will be $I_v^1$. Using \eqref{eq:imageaffineI} and \eqref{eq:imageaffineII} for $I_v^1$ we have 
\begin{eqnarray} \label{eq:defI0}
I_v^1
&=& 
a_v^n \int \limits_{S_+^{n-1}}   \int \limits_0^{\min u_i}  \ 
(1- a_v \l_{n-1}(\bd \R_+^n \cap H_-))^{N-n}  
\\  && \hskip3.5cm \nonumber  
\times \idotsint\limits_{(\bd \R_+^n \cap H)_{\neq}^n} 
\l_{n-1} ( [x_1, \dots , x_n])  \prod\limits_{j=1}^n J (T_{x_j},H)^{-1}  
\, d x_1 \dots d x_n \, dh du  
.
\end{eqnarray}
In Section \ref{ssec:mainterm} we will determine the precise asymptotics. Equation \eqref{eq:I_v^1} will tell us that
$$ I_v^1 = c_n N^{-n} (\ln N)^{n-2} (1+O((\ln N)^{-1})) $$
with some constant $c_n >0$ as $N \to \infty$. 

The error term $E_v^1$ can be estimated by using the fact that there are constants $\overline a, \underline a$ such that 
\begin{equation}\label{aquer}
 \underline a \l_{n-1} (B) \leq \l_{n-1, A_v} ( B) \leq \overline a \l_{n-1}(B)
\end{equation}
for all $v \in \cF_0(P)$ and all $B \subset \bd P$. 
This shows 
\begin{eqnarray} \label{eq:defE0}
 E_v^1
&\leq &
(2\overline a)^n \int \limits_{S_+^{n-1}}   \int \limits_{\min u_i}^{\infty}
 (1- \underline a \l_{n-1}(\bd  A_v P \cap H_-))^{N}  
\\ && \hskip3.5cm \nonumber
\idotsint\limits_{(\bd A_v  P \cap H)_{\neq}^n} 
 \l_{n-1} ( [x_1, \dots , x_n])  \prod\limits_{j=1}^n J (T_{x_j},H)^{-1}  
 \, d x_1 \dots d x_n \, dh du   .
\end{eqnarray}
In Section \ref{ssec:error1} we will show that this is of order $O(N^{-n} (\ln N)^{n-3})$, see \eqref{eq:E_v^1}.
This implies 
\begin{eqnarray}\label{eq:result-facets}
\E f_{n-1}(P_N)  
&=& \nonumber
\sum_{v \in \cF_0( P)}   \binom N n  
\b^{-1} (n-1)!  c_n N^{-n} (\ln N)^{n-2} (1+O((\ln N)^{-1}))
\\ &=& 
c_n f_0( P) (\ln N)^{n-2} (1+O((\ln N)^{-1}))
\end{eqnarray}
with some $c_n >0$ which is Theorem \ref{th:EfN}.

\subsection{The volume difference}\label{ssec:idea-vol}
We are interested in the expected volume difference 
$$ 
\E (V_n(P)-V_n(P_N)).
$$
With probability one the random polytope $P_{N}$ has the following property: 
For each facet $F \in \cF_{n-1}(P_N)$ that is not contained in a facet of $P$ there exists a unique vertex $v \in \cF_0(P)$, such that the outer unit normal vector $u_{F}$ of $F$ is contained in the normal cone $\cN( v, P)$, or equivalently the hyperplane $H$ containing $F$ is parallel to a supporting hyperplane to $P$ at $v$. Clearly all the sets $[F, v]$ are contained in $P \setminus P_N$ and they have pairwise disjoint interiors. This is immediate in dimension two, and holds in arbitrary dimensions because it holds for all two-dimensional sections through $P$ and $P_N$ containing two vertices of $P$.
We set 
\begin{equation}\label{def:CN-DN}
C_N= \bigcup_{v \in \cF_{0}(P)}  \bigcup_{ \substack{F \in \cF_{n-1}(P_N) \\ u_F \in \cN(v,P) \\ F \nsubseteq \bd P  } }  [F, v],\hskip 20mm  
D_N= P\setminus( P_N \cup C_N ) 
\end{equation}
where $D_N$  is the subset of $P \setminus P_N$ not covered by one of the simplices $[F, v]$ with $u_F \in \cN(v,P)$. We have 
\begin{eqnarray*}\label{eq:EdiffV-basic}
\E (V_n(P)-V_n(P_N))
&=& 
\E V_n(C_N)+ \E V_n(D_N) 
\\ &=& 
\E \left(
\sum\limits_{v \in \cF_0(P)} \sum\limits_{F \in \cF_{n-1}(P_N)}
V_n([F,v]) \I(u_F \in \cN(v,P))  \right) 
\ + \E V_n(D_N) .
\end{eqnarray*}
For the first summand we follow the approach already worked out in detail in the last section.
The convex hull $[X_{i_1}, \dots, X_{i_n}]$ forms a facet of $P_N$ if their affine hull does not intersect the convex hull of the remaining point, and to  simplify notation we set $u=u_F$ and $H(h,u)=H= \aff [X_1, \dots, X_n]$. The halfspace $H_-$ contains the random polytope $P_N$, and the probability that $X_{n+1}, \dots, X_N$  are contained in $H_- $ equals 
$$ \l_{n-1}(\bd P \cap H_- ))^{N-n} = (1-\l_{n-1}( \bd P \cap H_+ ))^{N-n}.$$
Thus 
\begin{align*}
\E V_n & (C_N)
\\=& 
\sum_{v \in \cF_0(P)} \binom N n  \E\Big( (1- \l_{n-1}( \bd P \cap H_+ ))^{N-n} \I  ( u \in \cN(v, P),\, \{ X_i \}_{i \leq n} \in (\bd P)^n_{\neq}) V_n[X_1, \dots, X_n, v] \Big) 
\\ =& 
\sum_{v \in \cF_0(P)}   \binom N n  \idotsint\limits_{(\bd P)_{\neq}^n} 
(1- \l_{n-1}(  \bd P \cap H_+ ))^{N-n} \I  ( u \in \cN(v, P))  V_n[x_1, \dots, x_n, v] \, d x_1 \dots d x_n
.
\end{align*}
We fix $v$ and use the affine transformation $A_v$ defined in the last section which maps $v$ to the origin and the edges onto the coordinate axes.
The transformation rule yields
\begin{eqnarray*}
\E V_n(C_N)
&=& 
\sum_{v \in \cF_0( P)}   \binom N n  \idotsint\limits_{(\bd A_v  P)_{\neq}^n}  (1- \l_{n-1, A_v} (\bd A_v P \cap H_+ ))^{N-n} \I  ( u \in \cN(0, A_v P)) 
\\ && \hskip3cm 
V_n [A_v^{-1} x_1, \dots, A_v^{-1} x_n,0]
\, d_{A_v} x_1 \dots d_{A_v} x_n .
\end{eqnarray*}
The volume of the simplex $[\{A_v^{-1} x_i\}_{i=1, \dots, n}, 0]$ is a constant $d_v=\det A_v^{-1}$ times the volume of $[\{x_i\}_{i=1, \dots, n}, 0]$ which equals $n^{-1}$ times the height $|h|$ times the volume of the base $[\{ x_i\}_{i=1, \dots, n}]$ .
By Z\"ahle's formula \eqref{eq:Zaehle} we obtain
\begin{eqnarray*}
\E V_n(C_N)
&=& 
\sum_{v \in \cF_0( P)}  d_v  \binom N n  
\b^{-1} \frac{(n-1)!}n  
\int \limits_{S^{n-1}} \! \int \limits_{\R}  \ 
\idotsint\limits_{(\bd A_v  P \cap H)_{\neq}^n}  (1- \l_{n-1, A_v} (\bd A_v P \cap H_+ ))^{N-n} 
\\ &&
|h| \l_{n-1} ( [x_1, \dots , x_n])^2  \prod\limits_{j=1}^n J (T_{x_j},H)^{-1}  
\I  ( u \in \cN(0, A_v P))  \, d_{A_v} x_1 \dots d_{A_v} x_n \, dh du .
\end{eqnarray*} 
We  split the integral into the two parts $\max_{i=1, \dots, n} u_i \leq h \leq 0 $ and $ h \leq \max_{i=1, \dots, n} u_i $ and substitute $u \mapsto -u,\ h \mapsto -h$.
The main part of the expected volume difference is
\begin{eqnarray*}
\E V_n(C_N)  
&=& 
\sum_{v \in \cF_0( P)} d_v  \binom N n  
\b^{-1} \frac{(n-1)!}n  
( I_v^2+E_v^2)
\end{eqnarray*}
with 
\begin{eqnarray}\label{eq:defI1}
I_v^2
&=& 
a_v^n \int \limits_{S_+^{n-1}}   \int \limits_0^{\min u_i}  \ 
(1- a_v \l_{n-1}(\bd \R_+^n \cap H_-))^{N-n} 
\\ && \hskip3.5cm \nonumber  
h \idotsint\limits_{(\bd \R_+^n \cap H)_{\neq}^n} 
 \l_{n-1} ( [x_1, \dots , x_n])^2  \prod\limits_{j=1}^n J (T_{x_j},H)^{-1}  
\, d x_1 \dots d x_n \, dh du  
\\[1ex] 
E_v^2
&=&
\int \limits_{S_+^{n-1}}   \int \limits_{\min u_i}^{\infty}
 (1- \l_{n-1, A_v} (\bd  A_v P \cap H_-))^{N-n}  
\\ && \hskip3.5cm \nonumber
h \idotsint\limits_{(\bd A_v  P \cap H)_{\neq}^n} 
 \l_{n-1} ( [x_1, \dots , x_n])^2  \prod\limits_{j=1}^n J (T_{x_j},H)^{-1}  
 \, d_{A_v} x_1 \dots d_{A_v} x_n \, dh du   .
\end{eqnarray}
The asymptotically dominating term will be $I_v^2$. In Section \ref{ssec:mainterm} we determine the precise asymptotics. Equation \eqref{eq:I_v^2} will tell us that 
$$ I_v^2 = c_n  a_v^{- \frac n{n-1}} N^{-n- \frac n{n-1}} (1+ O(N^{- \frac 1{(n-1)(n-2)}})) $$
with some constant $c_n >0$ as $N \to \infty$. 

The error term $E_v^2$ can be estimated by
\begin{eqnarray} \label{E-Term}
 E_v^2
&\leq &
(2\overline a)^n \int \limits_{S_+^{n-1}}   \int \limits_{\min u_i}^{\infty}
 (1- \underline a \l_{n-1}(\bd  A_v P \cap H_-))^{N}  
\\ && \hskip3.5cm \nonumber
h \idotsint\limits_{(\bd A_v  P \cap H)_{\neq}^n} 
 \l_{n-1} ( [x_1, \dots , x_n])^2  \prod\limits_{j=1}^n J (T_{x_j},H)^{-1}  
 \, d x_1 \dots d x_n \, dh du,
\end{eqnarray}
where $\overline a,  \underline a$ are as in (\ref{aquer}).
In Section \ref{ssec:error1}, equation \eqref{eq:E_v^2} we show that 
\begin{eqnarray*}
E_v^2  
=
O( N^{-n-\frac {n-1}{n-2} }).
\end{eqnarray*}

\bigskip
It remains to estimate 
$$
\E V_n(D_N)= \E \left( P\setminus\left( P_N \cup \bigcup_{F \in \cF_{n-1}(P_N)}  [F, v_F] \right) \right) . 
$$

The following argument is proved in detail in the paper of Affentranger and Wieacker \cite[p.302]{AW} and will only be sketched here. 

If $y \in D_N$, then the normal cone $\cN(y, [y,P_N])$ is not contained in any of the normal cones $\cN(v,P)$ of $P$, $v \in \cF_0(P)$. Hence $\cN(y, [y,P_N])$ meets at least two neighbouring normal cones $\cN(v_1,P), \cN(v_2,P)$, and thus the normal cone of the edge $e=[v_1,v_2] \in \cF_1(P)$. This implies that there exists a supporting hyperplane $H$ of $P$ with $H \cap P=e$ with the property that the parallel hyperplane through $y$ does not meet $P_N$.

We apply an affine map $A_e$ similar to the one defined above which maps $e=[v,w]$ to the unit interval $[0, e_n]$, $v$ to the origin, and the image of other edges containing $v$ contain the remaining unit intervals $[0,e_i]$. After applying this map the situation described above is the following: for $x=(x_1, \dots, x_n)= A_e y \in A_e D_N$ the supporting hyperplane $ A_e H =H(0, u)$ to $A_e P$ intersects $A_e P$ in the edge $[0, e_n]$.
The parallel hyperplane $H(\langle x,u\rangle ,u)$ contains $x$ and cuts off from $A_e P$ a cap disjoint from $A_e P_N$. This cap contains the simplex
$[0, \min(1,x_1) e_1, \dots, \min(1, x_{n-1}) e_{n-1}, e_n  ]$. 
Hence if $x \in A_e D_N$ then 
$$ [0, \min(1,x_1) e_1, \dots, \min(1, x_{n-1}) e_{n-1}, e_n  ] \cap A_e P_N= \emptyset  . $$
The probability of this event is given by 
\begin{eqnarray*}
\P([0, x_1 e_1, \lefteqn{
\dots, x_{n-1} e_{n-1}, e_n  ] \cap A_e P_N= \emptyset ) 
= 
}&&
\\ &=&
\left( 1- \l_{n-1} (A_e^{-1} ( \bd \mathbb{R}_+^n \cap [0, \min(1,x_1) e_1, \dots, \min(1,x_{n-1}) e_{n-1}, e_n  ]) )  \right)^{N}
\\ &\leq &
\left( 1- \underline a\l_{n-1} (\bd \mathbb{R}_+^n \cap [0, \min(1,x_1) e_1, \dots, \min(1,x_{n-1}) e_{n-1}, e_n  ])   \right)^{N}
\end{eqnarray*}
with some $\underline a >0$. We denote by $d_e$ the involved Jacobian of $A_e^{-1}$ and by $\overline d$ the maximum of $d_e$.
This implies the estimate
\begin{eqnarray*}
\E V_n(D_N) 
& = &
\int\limits_{P} \P(x \in D_N) dx
\\ & \leq  &
\sum_{e \in \cF_1(P)} d_e \int\limits_{A_e P} \left( 1- \underline a\l_{n-1} ( \bd \mathbb{R}_+^n \cap [0, \min(1,x_1) e_1, \dots, \min(1,x_{n-1}) e_{n-1}, e_n  ])   \right)^{N} dx 
\\ & \leq  &
f_1(P) \, \overline d  \int\limits_{[0,\t]^n} \left( 1- \underline a\l_{n-1} ( \bd \mathbb{R}_+^n \cap [0, \min(1,x_1) e_1, \dots, \min(1,x_{n-1}) e_{n-1}, e_n  ])   \right)^{N} dx 
\end{eqnarray*}
assuming again that $A_e P \subset [0,\t]^n$ for all $e$.
In Section \ref{ssec:error2} we prove that 
\begin{equation}\label{eq:F-estimate}
\E V_n(D_N)
= O(N^{- \frac{n-1}{n-2}})   .
\end{equation}
Combining our results we get
\begin{eqnarray}\label{eq:result-vol-diff}
\E (V_n(P)-V_n(P_N))
&=& \nonumber
\sum_{v \in \cF_0( P)} d_v  \binom N n  
\b^{-1} \frac{(n-1)!}n  
( I_v^2+E_v^2)
+ \E V_n(D_N) 
\\ &=& \nonumber
c_n  \sum_{v \in \cF_0( P)} d_v^2  a_v^{- \frac n{n-1}}   
N^{- \frac n{n-1}} (1+ O(N^{- \frac 1{(n-1)(n-2)}}))
\\ &=& 
c_{n,P} N^{- \frac n{n-1}} (1+ O(N^{- \frac 1{(n-1)(n-2)}}))
\end{eqnarray}
which is Theorem \ref{th:Vol}.

\subsection{Random simplices in simplices}\label{ssec:rd-simpl}

For $u \in S_+^{n-1}$, $h \geq 0$ and $H=H(h,u)$ we set
\begin{equation}\label{eq:def_Ek}
\cE_k(h, u)
=
\idotsint\limits_{(\bd \R_+^n \cap H)_{\neq}^n} 
\l_{n-1} ( [x_1, \dots , x_n])^{k}  \prod\limits_{j=1}^n J (T_{x_j},H(1,u))^{-1}  
\, d x_1 \dots d x_n 
\end{equation}
which is the (not normalized) $k$-th moment of the volume of a random simplex in $ \R_+^n \cap H(h,u)$ 
where the random points are chosen on the boundary of this simplex according to the weight functions 
$J (T_{x_j},H(1,u))^{-1} $. Recall that for almost all $x_j$, $T_{x_j}$ is the supporting hyperplane at $x_j$. In fact it is the coordinate hyperplane which contains $x_j$.

\begin{lemma}\label{le:rdsimpl-in-simpl}
For $k \geq 0$, there are constants $\cE_{k,\bm f}>0$ independent of $u$, such that 
\begin{eqnarray*}
\cE_k(h, u)
&=& 
h^{-(n+k )   }  n^{- \frac k2}(n-1)^{- \frac {n}2}
\sum_{\bm f \in \{ 1, \dots, n\}_{\neq}^n } \ 
\left( \prod_{i=1}^n \frac h {u_i} \right)^{n+k} 
\prod\limits_{i=1}^n \frac{u_{f_i}} h \ \cE_{k, \bm f} \ .
\end{eqnarray*}
\end{lemma}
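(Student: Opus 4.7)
The plan is to rescale coordinates so that the integration reduces to one over the standard simplex, leaving all $u$- and $h$-dependence as explicit Jacobian factors outside. Recall that $\Delta_u := \R_+^n \cap H(h,u)$ is the simplex with vertices $\frac{h}{u_i}e_i$, and its $(n-2)$-dimensional facets are $F_f := \{x \in \Delta_u : x_f = 0\}$ for $f = 1,\dots,n$. With probability one each $x_j$ lies in the relative interior of exactly one facet, say $F_{f_j}$, and for the simplex $[x_1,\dots,x_n]$ to be a non-degenerate $(n-1)$-simplex the indices $f_1,\dots,f_n$ must be distinct. I therefore split the integral as a sum over permutations $\bm f = (f_1,\dots,f_n) \in \{1,\dots,n\}_{\neq}^n$.

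On each piece I make the linear change of variable $x_j = L(y_j)$, where $L : \R^n \to \R^n$ is defined by $L(y)_i = \frac{h}{u_i}\,y_i$. Then $L$ sends the standard simplex $\Delta^* = \{y \geq 0 : \sum_i y_i = 1\}$ bijectively onto $\Delta_u$ and the standard facet $F_f^* = \{y \in \Delta^* : y_f = 0\}$ onto $F_f$. Parametrizing the affine hulls $H^* = \{\sum y_i = 1\}$ and $H = H(h,u)$ by dropping one coordinate, and likewise for $F_f^*$ and $F_f$, a short Gram-determinant computation yields
\[
J_{L|H^*} = \frac{h^{n-1}}{\sqrt{n}\,\prod_i u_i}, \qquad J_{L|F_f^*} = \frac{h^{n-2}\,\sqrt{1-u_f^2}}{\sqrt{n-1}\,\prod_{i\neq f} u_i}.
\]
The first Jacobian rescales the $(n-1)$-volume under the substitution: $\l_{n-1}([x_1,\dots,x_n]) = J_{L|H^*}\,\l_{n-1}([y_1,\dots,y_n])$.

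For the surface element I use that $T_{x_j}$ is the coordinate hyperplane $\{x_{f_j}=0\}$, whose unit normal is $e_{f_j}$; hence the sine of the angle with $H$ is $J(T_{x_j},H) = \sqrt{1-u_{f_j}^2}$. This produces the crucial cancellation
\[
J(T_{x_j},H)^{-1}\, dx_j \;=\; \frac{J_{L|F_{f_j}^*}}{\sqrt{1-u_{f_j}^2}}\, dy_j \;=\; \frac{h^{n-2}\,u_{f_j}}{\sqrt{n-1}\,\prod_i u_i}\,dy_j.
\]
Taking the product over $j=1,\dots,n$, using $\prod_j u_{f_j} = \prod_i u_i$ since $\bm f$ is a permutation, and setting
\[
\cE_{k,\bm f} \;:=\; \idotsint\limits_{y_j \in F_{f_j}^*} \l_{n-1}([y_1,\dots,y_n])^k \, dy_1 \cdots dy_n,
\]
the integral collapses to the claimed formula: a direct power count shows the exponents of $h$ and of $\prod_i u_i$ match the right-hand side exactly, and the positivity $\cE_{k,\bm f} > 0$ is automatic.

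The only real obstacle is the dimensional bookkeeping: one must carefully separate the $(n-1)$-dimensional rescaling governing $\l_{n-1}([x_1,\dots,x_n])$ from the $(n-2)$-dimensional rescalings of the boundary measures $dx_j$, and verify that the factor $\sqrt{1-u_f^2}$ appearing in $J_{L|F_f^*}$ cancels exactly against $J(T_{x_j},H)^{-1}$. Once this cancellation is pinned down, what remains is purely algebraic rearrangement.
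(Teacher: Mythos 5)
Your approach is substantively the same as the paper's: pull back by the diagonal linear map $L(y)_i = \frac{h}{u_i}\,y_i$ so that all $h$- and $u$-dependence appears as explicit Jacobian factors, and the $\cE_{k,\bm f}$ are integrals over the standard simplex. (The paper does this in two steps — first scaling out $h$, then applying $A^{-1} = \operatorname{diag}(1/u_i)$ — but the composite map is exactly your $L$.) The two Jacobians $J_{L|H^*}$ and $J_{L|F_f^*}$ and the cancellation with $J(T_{x_j},H)^{-1} = (1-u_{f_j}^2)^{-1/2}$ are all computed correctly.

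However, there is a genuine gap in the decomposition of the domain. You claim that \emph{``for the simplex $[x_1,\dots,x_n]$ to be a non-degenerate $(n-1)$-simplex the indices $f_1,\dots,f_n$ must be distinct,''} and accordingly you sum only over permutations. This is false for $n \ge 3$: for instance, with $n=3$ take two points in the relative interior of the edge $F_1 = \Delta_u \cap e_1^\perp$ and one point on $F_2$; the resulting triangle is non-degenerate. What degeneracy actually forces is only that not \emph{all} of $x_1,\dots,x_n$ lie in a single $(n-2)$-facet, which is exactly why the paper's set $\{1,\dots,n\}^n_{\neq}$ is defined to be all $n$-tuples with the constant tuples $(i,\dots,i)$ removed — not the set of permutations. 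Restricting to permutations omits the contributions where some $m_i = \#\{j : f_j = i\} > 1$, and these terms are essential downstream: the subsequent asymptotic analysis in Lemmas \ref{le:main-asymp-exp1}--\ref{le:main-asymp-exp2} is driven precisely by the distribution of the $m_i$'s, and the paper's geometric conclusion that the dominant facets of $P_N$ have vertices on exactly two facets of $P$ corresponds to tuples $\bm f$ that are very far from being permutations.

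The algebra you use does in fact go through without the permutation assumption: your expression $J(T_{x_j},H)^{-1}\,dx_j = \frac{h^{n-2}u_{f_j}}{\sqrt{n-1}\prod_i u_i}\,dy_j$ holds for each $j$ individually, and taking the product over $j$ and multiplying by the $k$-th power of $J_{L|H^*}$ gives exactly the factor $h^{(n-1)k+n(n-2)}\,n^{-k/2}(n-1)^{-n/2}\,(\prod_i u_i)^{-(n+k)}\prod_i u_{f_i}$ appearing in the lemma, whether or not $\bm f$ is a permutation. The step where you invoke $\prod_j u_{f_j} = \prod_i u_i$ is both unnecessary and invalid in the general case; simply drop it, replace ``permutations'' by ``non-constant $n$-tuples'' throughout, and the proof is correct.
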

\begin{proof}
For a point $x_j$ in the coordinate hyperplane $e_i^\perp$,  the weight function $J (T_{x_j},H(1,u))^{-1} $ is the sine 
of the angle between $e_i$ and $u$. Thus
\begin{equation}\label{eq:Jberechnen}
J (T_{x_j},H(1,u))= \| u\vert_{e_i^\perp}\| = \Big( 1- u_i^2 \Big)^{\frac 12}
\end{equation}
and hence is independent of $h$ as long as $u$ is fixed.
In \eqref{eq:def_Ek} we substitute $x_j = h y_j$ with $y_j \in H(1,u)$. The $(n-1)$-dimensional volume is homogeneous of degree $n-1$, hence 
$$ \l_{n-1} ([x_1, \dots, x_n]) = h^{n-1} \l_{n-1} ([y_1, \dots, y_n]) ,$$
and since $x_j$ are in the $(n-2)$-dimensional planes $ \bd \R_+^{n} \cap H(h,u)$ we have
$ dx_j = h^{n-2} dy_j . $
\begin{eqnarray}\label{eq:Ekhu}
\cE_k(h, u)
&=& \nonumber
h^{(n-1)k + n(n-2)}
\idotsint\limits_{(\bd \R_+^n \cap H(1,u))_{\neq}^n} 
\l_{n-1} ([y_1, \dots, y_n])^{k} 
\prod\limits_{j=1}^n J (T_{x_j},H(1,u))^{-1}  \, d y_1  \cdots d y_n  
\\[2ex] &=& 
h^{(n-1)k + n(n-2)}  \cE_k (1,u).
\end{eqnarray}
To evaluate $\cE_k(1,u)$ we condition on the facets in $e_1^\perp, \dots, e_n^\perp$ of $\R_+^n \cap H(1,u)$ from where the random points are chosen. Thus for
$$ \bm  f \in \{1, \dots , n\}^n  $$
we condition on the event 
$ y_i  \in e_{f_i}^\perp$. Because $\{y_1, \dots y_n\} \in (\bd \R_+^n \cap H(1,u))_{\neq}^n$, which means that not all points are contained in the same facet, we may assume that $ \bm f \in \{1, \dots , n\}^n_{\neq}  $ where we remove all $n$-tuples of the form $(i, \dots, i)$ and denote the remaining set by $\{1, \dots , n\}^n_{\neq}  $. Recalling \eqref{eq:Jberechnen}, we obtain
\begin{eqnarray}\label{le:rdsimpl-in-simpl-1}
\cE_k(1, u)
&=& 
\sum_{\bm f \in \{ 1, \dots, n\}_{\neq}^n } \ \prod\limits_{i=1}^n (1-u_{f_i}^2 )^{- \frac 12} 
\\&& 
\times \idotsint\limits_{(\bd \R_+^n \cap H(1,u))_{\neq}^n} 
\l_{n-1} ([y_1, \dots, y_n])^{k} 
\prod\limits_{i=1}^n \I( y_i  \in e_{f_i}^\perp) \, d y_1  \cdots d y_n  .
\nonumber
\end{eqnarray}
A short computation shows that $H(1,u)$ meets the coordinate axis in the points $\frac 1 {u_i} e_i$.
We substitute $z= Ay$, $y = A^{-1} z$, where $A$ is the affine map transforming 
$H(1, \mathbf 1_n)$ into $H(1,u)$. Here  $\mathbf 1_n$ is the vector 
$(1, \dots, 1)^T$. The map is given by 
\begin{equation}\label{eq:defA}
A= \left(
\begin{array}{ccc}
u_1 & 0 \cdots & 0 \\[1ex]
\vdots  & \ddots & \vdots \\[1ex]
0 &  \cdots 0 & u_n 
\end{array}
\right) .
\end{equation}
The volume of the simplex $\R_+^n \cap H_- (1,u)$ is given by 
$\frac 1{n!}  \prod_{i=1}^n \frac 1 {u_i}$, thus the \lq base\rq\ $\R_+^n \cap H(1,u)$ of this simplex 
has $(n-1)$-volume $\frac 1{(n-1)!} \prod_{i=1}^n \frac 1 {u_i}$.
The regular simplex $\R_+^n \cap H (1,\mathbf 1_n)$ has $(n-1)$-volume $\frac 
1{(n-1)!} \sqrt n$.
Hence
$$ \l_{n-1} ([A^{-1} z_1, \dots, A^{-1} z_n])^k = 
n^{- \frac k2}
\left( \prod_{i=1}^n \frac 1 {u_i} \right)^k \l_{n-1} ([ z_1, \dots,  z_n])^k .
$$
The $(n-1)$-volume of the simplex spanned by the origin and the facet of $\bd \R_+^n \cap H(1,u)$ in 
$e_i^\perp$ is given by $\frac 1 {(n-1)!} \prod_{j \neq i} \frac 1{u_j}$, its height by  
$\|(u_1, \dots, u_{i-1}, u_{i+1}, u_n) \|^{-1} = (1- u_i^2)^{-\frac 12}$ and hence the $(n-2)$-volume 
of the facet of $\bd \R_+^n \cap H(1,u)$ in $e_i^\perp$ is 
$$\l_{n-2}(\bd \R_+^n \cap e_i^\perp\cap H(1,u) )= \frac 1{(n-2)!}  (1- u_i^2)^{\frac 12} \prod_{j \neq i} \frac 1{u_j} . $$
Comparing this to the volume $\frac 1{(n-2)!} \sqrt {n-1}$ of the facet of the simplex 
$\bd \R_+^n \cap H(1,\mathbf 1_n)$ in $e_i^\perp$ which equals the volume of $\R_+^{n-1} \cap H (1,\mathbf 1_{n-1})$ shows that the Jacobian in $e_{f_i}^\perp$ of the map $A$ is 
$$
\frac{\l_{n-2}(\bd \R_+^n \cap e_i^\perp\cap H(1,u) )}{\l_{n-2}(\bd \R_+^n \cap e_i^\perp\cap H(1,\mathbf 1_n) )}
=
(n-1)^{- \frac 12} (1- u_{f_i}^2)^{\frac 12} \prod_{j 
\neq f_i} \frac 1{u_j} \I(z_i \in e_{f_i}^\perp)  . 
$$
Combining these Jacobians with (\ref{le:rdsimpl-in-simpl-1}) we obtain
\begin{eqnarray*}
\cE_k(1, u)
&=& 
n^{- \frac {k}2}(n-1)^{- \frac {n}2}
\sum_{\bm f \in \{ 1, \dots, n\}_{\neq}^n } \ 
\left( \prod_{i=1}^n \frac 1 {u_i} \right)^{n+k} 
\prod\limits_{i=1}^n u_{f_i} 
 \\&& 
\times \idotsint\limits_{ \bd \R_+^n \cap H(1,\mathbf 1_n)} 
\l_{n-1} ([ z_1, \dots,  z_n])^{k} 
\prod\limits_{i=1}^n \I( z_i  \in e_{f_i}^\perp ) 
  \, d z_1  \cdots d z_n  
\\ &=:&
n^{- \frac {k}2}(n-1)^{- \frac {n}2}
\sum_{\bm f \in \{ 1, \dots, n\}_{\neq}^n } \ 
\left( \prod_{i=1}^n \frac 1 {u_i} \right)^{n+k} 
\prod\limits_{i=1}^n u_{f_i} \ \mathcal E_{k,\bm f}, 
\end{eqnarray*}
where $\cE_{k,\bm f}$ is independent of $u$. Together with \eqref{eq:Ekhu} this finishes the proof. 
\end{proof}

When estimating the error term in Section \ref{ssec:error1} we need a second slightly different result which estimates $\cE_0 (h,u)$ for a cube. In that section we assume in \eqref{eq:def-uk-kleiner} that there is a $ k \leq n-1$ such that 
\begin{equation}\label{eq:k-def}
 \frac h{u_1} , \dots , \frac h{u_k}  \leq 1 \mbox{\ \ and\ \  } \frac h{u_{k+1}}, \dots, \frac h {u_n} \geq 1 . 
\end{equation}
Then $H$ meets the coordinate axes in the points $\frac h{u_i} e_i \in [0,1]^n$ for $i=1, \dots, k$, and the other points of intersection are outside of $[0, 1]^n$. 
We set
\begin{eqnarray*}
\cE_0^1 (h, u)
&=& 
\idotsint\limits_{(\bd [0,1]^n  \cap H)^n} 
\prod\limits_{j=1}^n J (T_{x_j},H)^{-1}  
\prod\limits_{f=1}^n \I\left( \left| \{ x_1, \dots, x_n\} \cap e_f^\perp \right| \leq n-1 \right)
 \, d  x_1 \dots d  x_n .
\end{eqnarray*}

\begin{lemma}\label{le:rdsimpl-in-simpl-error}
Let $ k \leq n-1$ be given such that  \eqref{eq:k-def} holds. Then we have
\begin{eqnarray*}
\cE_0^1 (h, u)
 &\leq &
c_{n, k}
h^{-n} \prod_{j=1}^k \left( \frac h{u_j}    \right)^n
\sum_{\bm f \in \{1, \dots, n\}^n}
\prod_{j =1}^k \left( \frac {u_{j}}h \right)^{m_j}
\end{eqnarray*}
with
$m_j = m_j(\bm f)= \sum_{i=1}^{n} \I( f_i=j) \leq n-1$ for $j \leq k$, and $\sum_{i=1}^k m_i \leq n$.
\end{lemma}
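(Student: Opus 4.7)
The plan is to mirror the proof of Lemma~\ref{le:rdsimpl-in-simpl}, adapted for the cube $[0,1]^n$, with the main new ingredient being a careful bound on the $(n-2)$-volumes of the cube facet slices that exploits condition~\eqref{eq:k-def}. First I would condition on which facet of $[0,1]^n$ contains each integration point $x_j$. The cube has $2n$ facets, $F_f^0=\{x_f=0\}$ and $F_f^1=\{x_f=1\}$ for $f=1,\dots,n$; write $x_j \in F_{f_j}^{\epsilon_j}$ with $f_j \in \{1,\dots,n\}$ and $\epsilon_j \in \{0,1\}$. As in \eqref{eq:Jberechnen}, the weight $J(T_{x_j},H)^{-1}=(1-u_{f_j}^2)^{-1/2}$ depends only on the coordinate direction $f_j$, since $F_f^0$ and $F_f^1$ are parallel. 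The indicator in the definition of $\cE_0^1$ excludes only configurations with $\bm f=(f,\dots,f)$ and $\bm \epsilon=\bm 0$ (all points in one near facet); discarding this subtraction and using the independence of the $x_j$ yields
\begin{equation*}
\cE_0^1(h,u)\leq \sum_{\bm f\in\{1,\dots,n\}^n}\prod_{j=1}^n (1-u_{f_j}^2)^{-1/2}\bigl(A_{f_j}^0 + A_{f_j}^1\bigr),
\end{equation*}
where $A_f^\epsilon := \l_{n-2}(F_f^\epsilon\cap H)$ is the $(n-2)$-measure of the facet slice.

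The main technical step is to estimate $A_f^\epsilon$, which equals the $(n-2)$-measure of $\{x\in[0,1]^{n-1}: \sum_{l\neq f} u_l x_l = h-\epsilon u_f\}$. The naive simplex bound $A_f^\epsilon \leq \frac{(h-\epsilon u_f)_+^{n-2}\sqrt{1-u_f^2}}{(n-2)!\prod_{l\neq f} u_l}$ is too loose: for $l>k$, the simplex vertex $\frac{h}{u_l}e_l$ lies outside $[0,1]^{n-1}$ and the cube clipping substantially reduces the slice. I would proceed via the coarea formula, expressing $A_f^\epsilon$ as $\sqrt{1-u_f^2}$ times the derivative of the $(n-1)$-volume of the clipped sub-level set $\{x\in[0,1]^{n-1}:\sum_{l\neq f} u_l x_l\leq h-\epsilon u_f\}$ with respect to the level. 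The key observation is that for $l\leq k$ the constraint $x_l\leq 1$ is automatic, since the hyperplane constraint $u_l x_l\leq h$ already forces $x_l\leq h/u_l\leq 1$. Separating the $l\leq k$ directions (inner simplex integration) from the $l>k$ directions (outer cube integration) and bounding $(h - \sum_{l>k,\,l\neq f} u_l y_l)_+^{K-1} \leq h^{K-1}$, where $K = \#\{l\leq k: l\neq f\}$, yields
\begin{equation*}
A_f^\epsilon \leq c_n\, \sqrt{1-u_f^2}\, h^{-1}\prod_{l\leq k,\,l\neq f}(h/u_l),
\end{equation*}
with the $u_l$-dependence for $l>k$ absorbed into the constant.

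Substituting this estimate into the factorized sum, the Jacobian factor $(1-u_{f_j}^2)^{-1/2}$ cancels $\sqrt{1-u_{f_j}^2}$, so each point contributes $h^{-1}\prod_{l\leq k,\,l\neq f_j}(h/u_l)$. Accumulating over $j=1,\dots,n$, the exponent of $(h/u_l)$ becomes $\#\{j:f_j\neq l\}=n-m_l$, producing $h^{-n}\prod_{l\leq k}(h/u_l)^{n-m_l}=h^{-n}\prod_{l\leq k}(h/u_l)^n(u_l/h)^{m_l}$, which after pulling the $\bm f$-independent factor $\prod_{l\leq k}(h/u_l)^n$ out of the sum matches the claimed bound. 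The constraint $m_j\leq n-1$ for $j\leq k$ corresponds to the configurations removed by the indicator. The main obstacle is the slice-volume estimate itself; the pivotal point is that the cube constraint $x_l\leq 1$ is automatic for $l\leq k$, which is what decouples the small coordinates $l>k$ from the final bound and allows their contributions to be absorbed into $c_{n,k}$. The degenerate case $K=0$ (essentially $k=0$) has to be checked separately, but there the bound reduces to $c_n\sqrt{1-u_f^2}/h$ and follows from a direct volume estimate.
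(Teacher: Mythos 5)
Your proposal follows the paper's strategy (condition on which facet of the cube each point lies on, estimate the slice measures, multiply), and the coarea-based slice estimate is a valid alternative route to the paper's simplex-times-diameter argument; the observation that $x_l\leq 1$ is automatic for $l\leq k$ is indeed the crux of that step. However, there is a genuine gap in the handling of the indicator. You drop it (``discarding this subtraction''), so your sum runs over \emph{all} $\bm f\in\{1,\dots,n\}^n$, whereas the lemma sums only over $\bm f$ with $m_j\leq n-1$ for $j\leq k$. The constant tuples $\bm f=(j,\dots,j)$ with $j\leq k$, which have $m_j=n$, appear in your bound with value $(1-u_j^2)^{-n/2}(A_j^0)^n\neq 0$ and cannot be absorbed into the constrained terms (the ratio of such a term to a nearby constrained term is of size $u_j/u_{j'}$, which is unbounded over admissible $u$). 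So you have proved a strictly weaker statement, and the constraint is not cosmetic: in Section~\ref{ssec:error1}, an $\bm f$ with $m_j=n$ gives $M=n$ and only a single index with strict inequality $l_i>\frac{L}{k-1}-1$, so neither case of Lemma~\ref{le:main-asymp-exp2} would apply. Your closing remark that ``the constraint $m_j\leq n-1$ corresponds to the configurations removed by the indicator'' points at the right source, but having already discarded the indicator, the constraint is not actually present in your bound.

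The fix is the paper's: keep the indicator when conditioning on $\bm f$, and observe that for $j\leq k$ the opposite facet $\{x_j=1\}$ meets $H$ only in a nullset (there $h-u_j\leq 0$ forces $\sum_{i\neq j}u_i x_i\leq 0$, i.e., at most a single point). Consequently the only way to realize $m_j=n$ for $j\leq k$ is to put all $n$ points on $\{x_j=0\}$, which the indicator forbids. It is this \emph{combination}---indicator plus emptiness of the opposite facet---that yields $m_j\leq n-1$ for $j\leq k$; the indicator alone would not, since a priori points could be split between $\{x_j=0\}$ and $\{x_j=1\}$. Once this is in place, your slice estimates and bookkeeping go through and reproduce the lemma.
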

\begin{proof}
First we compare the intersection of $H$ with the facet of $[0,1]^n $ in $e_f^\perp$ to the intersection of $H$ with the opposite facet of $[0,1]^n $ in $e_f +e_f^\perp$, $f=1, \dots, n$.
For $i \neq f$ the hyperplane $H$ meets the coordinate axes $\lin \{ e_i\}$ in $e_f^\perp$ in points $\frac h{u_i} e_i$. It meets the shifted coordinate axes $e_f+ \lin \{ e_i\}$ in the opposite facet in the points $e_f+\frac {h-u_f}{u_i} e_i$. Because $u \in S_+^{n-1}$ we have $u_f \geq 0$.
This shows that the facet of $H \cap [0,1]^n$  in $e_f^\perp$ contains the simplex $[\{ \frac h{u_i} e_i \}_{i \leq k,\, i \neq f}]$. The opposite facet contains either the smaller simplex $[\{ \frac {h-u_f}{u_i} e_i \}_{i \leq k,\, i \neq f}]$ if $f \geq k+1$ and $\frac h {u_f} >1$ , and otherwise the intersection is empty, $(H \cap [0,1]^n) \cap (e_f + e_f^\perp) = \emptyset$. 
The simplex $[\{ \frac h{u_i} e_i \}_{i \leq k,\, i \neq f}]$ has volume 
$$ \frac 1{(k-2)!} \frac 1{h ( 1- u_f^2 )^{-\frac 12}} \prod_{i \leq k,\, i \neq f} \frac h{u_i}    $$
for $f \leq k$, and 
$$ \frac 1{(k-1)!} \frac 1{h ( 1- u_f^2 )^{-\frac 12}} \prod_{i \leq k} \frac h{u_i}    $$
for $f \geq k+1$, the volume in the opposite facet clearly is smaller for $f \geq k+1$ or vanishes for $f \leq k$.

We use $J (T_{x},H(h,u))^{-1} = J (T_{x},H(1,u))^{-1}  = \Big( 1- u_f^2 \Big)^{-\frac 12} $ for $x \in e_f^\perp$.
For $f \leq k$ this proves
\begin{eqnarray*}
\int\limits_{([0,1]^n \cap H) \cap e_f^\perp} 
J (T_{x},H)^{-1}  
\, d x
&=& 
\Big( 1- u_f^2 \Big)^{-\frac 12}
\int\limits_{([0,1]^n \cap H) \cap e_f^\perp} 
\, d x
\\ & \leq &
 \frac 1{(k-2)!} (n-k)^{\frac {n-k}2}  \frac 1h \prod_{i \leq k,\, i \neq f} \frac h{u_i} ,
\end{eqnarray*}
since $(n-k)^\frac 12$ is the diameter of the $(n-k)$-dimensional unit cube. In this case there is no simplex in the opposite facet.
Analogously, for $f \geq k+1$
\begin{eqnarray*}
\int\limits_{([0,1]^n \cap H) \cap e_f^\perp} 
J (T_{x},H)^{-1}  
\, d x
& \leq &
\frac 1{(k-1)!} (n-k-1)^{\frac {n-k-1}2}   \frac 1h \prod_{i \leq k} \frac h{u_i} , 
\end{eqnarray*}
and 
\begin{eqnarray*}
\int\limits_{([0,1]^n \cap H) \cap ( e_f + e_f^\perp)} 
J (T_{x},H)^{-1}  
\, d x
& \leq &
 \frac 1{(k-1)!} (n-k-1)^{\frac {n-k-1}2}  \frac 1h \prod_{i \leq k} \frac h{u_i}   .
\end{eqnarray*}
Again we condition on the facets $\bd [0,1]^n  \cap H(1,u)$ from where the random points are chosen. Because of the term 
$\I\left( \left| \{ x_1, \dots, x_n\} \cap e_f^\perp \right| \leq n-1 \right)$, 
it is impossible that all points are contained in one of the facets in $e_f^\perp$. 
Thus for $f \leq k$ we have at  most $n-1$ points in $([0,1]^n \cap H) \cap e_f^\perp$ and no point in $([0,1]^n \cap H) \cap (e_f +e_f^\perp)$ because this set is empty.
For $f \geq k+1$ we have at  most $n-1$ points in $([0,1]^n \cap H) \cap e_f^\perp$ and maybe some additional points in $([0,1]^n \cap H) \cap (e_f +e_f^\perp)$.

Now for $j=1, \dots, n$ there is some $f_j$ such that $ x_j$ is either in the facet $[0,1]^n \cap e_{f_j}^\perp$ or in the opposite facet $[0,1]^n \cap (e_{f_j} + e_{f_j}^\perp)$.
This defines a vector 
$$ \bm  f \in \{1, \dots , n\}^n , $$
and we take into account that for $f \leq k$
$$ m_f= \sum_{j=1}^{n} \I(f_j=f) \leq n-1 . $$
This yields
\begin{eqnarray*}
\cE_0^1 (h, u)
&=& 
\sum_{\bm f \in \{1, \dots, n\}^n}
\ \prod_{j : f_j\leq k} \left( \int\limits_{\bd [0,1]^n  \cap H}  J (T_{x_j},H)^{-1}  
\I\left( x_j \in e_{f_j}^\perp ) \right)
 \, d  x_j \right)
\\ &&
 \hskip1.5cm \times \prod_{j\colon f_j \geq k+1} \left( \int\limits_{\bd [0,1]^n  \cap H}  J (T_{x_j},H)^{-1}  
\I\left( x_j \in e_{f_j}^\perp \cup (e_{f_j} + e_{f_j}^\perp) \right)
 \, d  x_j \right)
\\ &\leq &
c_{n, k}
h^{-n} \prod_{j=1}^k \left( \frac h{u_j}    \right)^n
\sum_{\bm f \in \{1, \dots, n\}^n}
\prod_{j =1}^k \left( \frac {u_{j}}h \right)^{m_j}
\end{eqnarray*}
with
$m_j = m_j(\bm f) = \sum_i \I( f_i=j) \leq n-1$ for $j \leq k$, and $\sum_1^k m_i \leq \sum_1^n m_i =n$.

\end{proof}

\subsection{The crucial substitution}

In the next sections we will end up with integrals over $u \in S_+^{n-1}$ and 
where we split the integrals in the part where $h \geq \min _{1\leq i \leq n} u_i$ and the part where 
$h \leq \min _{1\leq i \leq n} u_i$. Then the following substitution is helpful.

\begin{lemma}\label{le:cruc-subst}
Let $f: (\R_+)^n \to \mathbb{R}$ be an integrable function such that both sides of the following equation are finite. Then 
\begin{eqnarray*}
\int \limits_{S_+^{n-1}} \int\limits_{\R_+}
f\left( \frac h {u_1}, \dots, \frac h{u_n}\right)
h^{-(n+1) } 
\ dh \, du
&=& 
\idotsint\limits_{(\R_+)^n}
f(t_1, \dots, t_n) \prod_{i=1}^{n}t_{i}^{-2}\, 
dt_1 \dots dt_n .
\end{eqnarray*}
\end{lemma}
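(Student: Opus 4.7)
My plan is to verify the identity by a direct change of variables that factors through the reciprocal map and polar coordinates on $\R_+^n$. Define $\Phi \colon S_+^{n-1} \times \R_+ \to \R_+^n$ by $\Phi(u,h) = (h/u_1,\dots,h/u_n)$. A quick check shows $\Phi$ is a bijection: given $t \in \R_+^n$, set $h = \bigl(\sum_i t_i^{-2}\bigr)^{-1/2}$ and $u_i = h/t_i$; then $\sum_i u_i^2 = h^2 \sum_i t_i^{-2} = 1$, so $u \in S_+^{n-1}$.

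To compute the Jacobian, I would first substitute $s_i = 1/t_i$ on the right-hand side. This is a diffeomorphism of $\R_+^n$ to itself with Jacobian determinant $\prod_i t_i^{-2}$, so
$$
\prod_{i=1}^n t_i^{-2}\, dt_1 \cdots dt_n \;=\; ds_1 \cdots ds_n .
$$
The definition of $\Phi$ then yields $s_i = u_i/h$, i.e.\ $s = u/h$.

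Next, I would pass to polar coordinates in $s$-space: writing $s = \rho v$ with $\rho = \|s\|$ and $v = s/\|s\| \in S_+^{n-1}$ (which lies in the positive orthant since $s_i > 0$), one has the standard identity $ds_1 \cdots ds_n = \rho^{n-1}\, d\rho\, dv$, where $dv$ is the Hausdorff measure on $S_+^{n-1}$. From $s = u/h$ one reads off $\rho = 1/h$ and $v = u$, hence $d\rho = h^{-2}\, dh$ and
$$
\rho^{n-1}\, d\rho\, dv \;=\; h^{-(n-1)} \cdot h^{-2}\, dh\, du \;=\; h^{-(n+1)}\, dh\, du .
$$
Combining the two computations gives $\prod_i t_i^{-2}\, dt = h^{-(n+1)}\, dh\, du$ under $\Phi$, and since the integrand transforms as $f(t_1,\dots,t_n) = f(h/u_1,\dots,h/u_n)$, the two integrals in the lemma coincide.

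There is no real technical obstacle here; the only points to mention are that the bijectivity of $\Phi$ is strict on $S_+^{n-1} \times \R_+$ and $\R_+^n$ (up to the lower-dimensional boundary strata, which are negligible for the integrals), and that the hypothesis that both sides are finite justifies Fubini and the change of variables without further integrability concerns.
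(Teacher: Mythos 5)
Your proof is correct and follows essentially the same route as the paper's: the paper substitutes $r=1/h$, interprets $(r,u)$ as polar coordinates to pass to Cartesian $x=ru$, and then sets $x_i=1/t_i$; you simply run the identical chain of substitutions in the reverse direction, starting from the $t$-integral, inverting to $s_i=1/t_i$, and then passing to polar coordinates $s=\rho v$ with $\rho=1/h$, $v=u$.
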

In particular we will make extensive use of the following version where we use that  
the range of integration $0 \leq h \leq u_i$ for all $i=1, \dots, n$, is equivalent to $t_i \in [0,1]$:
\begin{eqnarray*}
\int \limits_{S_+^{n-1}} \lefteqn{\int\limits_0^{\min_{1 \leq i \leq n} \{ u_i \}}
\left(1- a \frac{\sum u_i}{ \prod u_i}  h^{n-1} \right)^{N-n}   
h^{-(n+1) } 
\prod_{i=1}^n \left(\frac{u_i}h\right)^{-(n+1 + \e) + m_i} \ dh \, du
}&&
\\ &=& 
\int \limits_0^1 \dots  \int\limits_0^1
\left(1- a \sum_i \prod_{j \neq i} t_j \right)^{N-n}
\ \prod_{i=1}^n t_i^{n-1  + \e-m_i} 
dt_1 \dots dt_n,
\end{eqnarray*}
where $\e \in \{0,1\}$, $N$ is the number of chosen points and the $m_i$ are as in Lemma \ref{le:rdsimpl-in-simpl-error}.

\begin{proof}
The goal is to rewrite the integration $dh\, du$ over the set of hyperplanes into an integration with respect to $t_1, \dots, t_n$ where these are the intersections of the hyperplane $H(h,u)$ with the coordinate axis. First, the substitution $r = h^{-1}$ leads to $dh = -r^{-2} dr$. Then we pass from  polar coordinates $(r,u)$ to the Cartesian coordinate system: for $h,r \in \R_+$ and $u\in S^{n-1}_+$ this gives
$$
h^{-(n+1)}\, dh du =r^{n-1}\, dr du =dx_{1}\cdots dx_{n}.
$$
Now we substitute $x_{i}=\frac{1}{t_{i}}$ and take into account that 
$$
h^{-1}=r=\left(\sum_{i=1}^{n}|x_{i}|^{2}\right)^{\frac{1}{2}}
=\left(\sum_{i=1}^{n}\left|\frac{1}{t_{i}}\right|^{2}\right)^{\frac{1}{2}}.
$$
Thus finally we have 
$$
h^{-(n+1)}\, dhdu = \prod_{i=1}^{n}t_{i}^{-2}\, dt_{1}\cdots dt_{n}
$$
with
$h^{-1} u_i= r u_i=x_i=t_i^{-1}$.
\end{proof}

\subsection{The main term}\label{ssec:mainterm}

By \eqref{eq:defI0} and \eqref{eq:defI1}, for $\e \in \{0,1\}$  we have to investigate 
\begin{eqnarray*} 
I_v^{1+\e}
&=& 
a_v^n  \int \limits_{S_+^{n-1}}   \int \limits_0^{\min u_i}  \ 
(1- a_v \l_{n-1}(\bd \R_+^n \cap H_- ))^{N-n}  h^{\e}
\\ && \hskip3.5cm \nonumber  
\times \idotsint\limits_{(\bd \R_+^n \cap H)_{\neq}^n} 
\l_{n-1} ( [x_1, \dots , x_n])^{1+\e}  \prod\limits_{j=1}^n J (T_{x_j},H)^{-1}  
\, d x_1 \dots d x_n \, dh du    
\\ &=& 
a_v^n  \int \limits_{S_+^{n-1}}   \int \limits_0^{\min u_i}  \ 
(1- a_v \l_{n-1}(\bd \R_+^n \cap H_- ))^{N-n}  h^{\e}
\cE_{1+\e} (h, u)
 \, dh du  
\end{eqnarray*}
where we use the notation from (\ref{eq:def_Ek}). Recall that $H=H(h,u)$ meets the coordinate axis in the points $t_i e_i = \frac h {u_i} e_i$, and hence
$$ 
\l_{n-1}(\bd \R_+^n \cap H_- )=
\frac 1{(n-1)!}  \frac{\sum u_i}{\prod u_i} h^{n-1}.
$$
We plug this and the result of Lemma \ref{le:rdsimpl-in-simpl} into $I_v^{1+\e}$, set $m_i= \sum_j \I(f_j=i)$ and obtain
\begin{eqnarray*} 
I_v^{1+\e}
&=& 
n^{- \frac {1+\e}2} (n-1)^{- \frac {n}2} a_v^n
\sum_{\bm f \in \{ 1, \dots, n\}_{\neq}^n } \cE_{1+\e,\bm f}
\\&&
\int \limits_{S_+^{n-1}}   \int \limits_0^{\min u_i}  \ 
\left(1- \frac {a_v}{(n-1)!}  \frac{\sum u_i}{\prod u_i} h^{n-1} \right)^{N-n}  
h^{ - (n +1)}  
\prod_{i=1}^n \left(\frac{u_i}h \right)^{-(n+1+\e)+m_i}  
 \, dh du  .
\end{eqnarray*}
Note that $\sum m_i=n$. 
We use the substitution introduced in Lemma \ref{le:cruc-subst} and use the notation from Lemma \ref{le:main-asymp-exp1} and Lemma \ref{le:main-asymp-exp2}, in particular we use the function $\cJ(\cdot)$ introduced in \eqref{GrossJ}. 
\begin{eqnarray*} 
I_v^{1+\e}
&=& 
n^{- \frac {1+\e}2} (n-1)^{- \frac {n}2} a_v^n (n-1)^{-\e} 
\sum_{\bm f \in \{ 1, \dots, n\}_{\neq}^n } \cE_{1+\e,\bm f}
\\&&
\int \limits_0^1 \dots  \int\limits_0^1
\left(1- \frac {a_v}{(n-1)!}   \sum_i \prod_{j \neq i} t_j \right)^{N-n}
\ \prod_{i=1}^n t_i^{n-1  + \e-m_i} 
dt_1 \dots dt_n
\\ &=&
n^{- \frac {1+\e}2} (n-1)^{- \frac {n}2} a_v^n (n-1)^{-\e} 
\sum_{\bm f \in \{ 1, \dots, n\}_{\neq}^n } \cE_{1+\e,\bm f} \ 
\cJ(\bm m - (1+\e) \bm 1)
\end{eqnarray*}
with $\bm m=(m_1, \dots, m_n)$. 
In the case $\e=1$, 
\begin{eqnarray*} 
I_v^2
&=& 
n^{- \frac {2}2} (n-1)^{- \frac {n}2} a_v^n (n-1)^{-1} 
\sum_{\bm f \in \{ 1, \dots, n\}_{\neq}^n } \cE_{2,\bm f} \ 
\cJ(\bm m - 2\,  \cdot \bm 1),
\end{eqnarray*}
and Lemma \ref{le:main-asymp-exp1} (with $L=-n$) implies with a constant $c_{\bm m , n}$  that depends on $\bm m$ and $n$,
\begin{eqnarray}\label{eq:I_v^2} 
I_v^2
&=& \nonumber
c_n  a_v^{-\frac {n}{n-1} }
\sum_{\bm f } \cE_{2, \bm f} \ 
 c_{\bm m , n} \ N ^{-n-\frac{n}{n-1}} (1+ O(N^{- \frac 1{(n-1)(n-2)}}))
\\ &=&
c_n  a_v^{-\frac {n}{n-1} }
\ N ^{-n-\frac{n}{n-1}} (1+ O(N^{- \frac 1{(n-1)(n-2)}})) , 
\end{eqnarray}
where the implicit constant in $O(\cdot)$ may depend on $a_v$. 
Because $c_{\bm m, n}>0$, all terms with $\bm f \in \{ 1, \dots, n\}_{\neq}^n $ contribute.
Geometrically this means that \emph{the contribution for the volume difference comes from all facets of $P_N$.}

In the case $\e=0$ the asymptotic results from Lemma \ref{le:main-asymp-exp2} (with $L=0$) give 
\begin{eqnarray}\label{eq:I_v^1} 
I_v^1
&=& \nonumber
n^{- \frac {1}2} (n-1)^{- \frac {n}2} a_v^n 
\sum_{\bm f \in \{ 1, \dots, n\}_{\neq}^n } \cE_{1,\bm f} \ 
\cJ(\bm m - \bm 1)
\\ &=& \nonumber
c_n \sum_{\bm f:\, \sharp \{m_i>0\}=2 } \cE_{1,\bm f} \ d_{\bm m,n} N^{-n} (\ln N)^{n-2} (1+O((\ln N)^{-1}))
+
c_n \sum_{\bm f:\, \sharp \{m_i>0\}\geq 3 } O(N^{-n} (\ln N)^{n-3})
\\ &=&
c_n N^{-n} (\ln N)^{n-2} (1+O((\ln N)^{-1}))
\end{eqnarray}
where only those terms contribute for which $f_i$ is concentrated on two values, and where the implicit constant in $O(\cdot)$ may depend on $a_v$. 
We can apply Lemma \ref{le:main-asymp-exp2} as (\ref{alpha-klein}) holds.

Geometrically this implies that \emph{the main contribution comes from that facets of $P_N$ whose vertices are on precisely two facets of $P$.}

\subsection{The error of the first kind}\label{ssec:error1}

Denote by $\diam (K)$ the diameter of a convex set $K$.  By \eqref{E-Term} and \eqref{eq:defE0}, for the error term we have to estimate
\begin{eqnarray*}
E_v^{1+\e}
&\leq&
(2\overline a)^n \int \limits_{S_+^{n-1}}   \int \limits_{\min u_i}^{\diam (A_v P)} 
 (1- \underline a \l_{n-1}(\bd  A_v P \cap H_-))^{N} h^{\e} 
\\ && \hskip3cm 
\idotsint\limits_{(\bd A_v  P \cap H)_{\neq}^n} 
 \l_{n-1} ( [x_1, \dots , x_n])^{1+\e}  \prod\limits_{j=1}^n J (T_{x_j},H)^{-1}  
 \, d  x_1 \dots d  x_n \, dh du   .
\\ & \leq &
(2\overline a)^n  \int \limits_{S_+^{n-1}}   \int \limits_{\min u_i}^{\diam (A_v P)} 
 (1- \underline a \l_{n-1}(\bd  A_v P \cap H_-))^{N}  h^{\e}
\\ && \hskip3cm 
 \l_{n-1}(A_v P \cap H)^{1+\e}  \idotsint\limits_{(\bd A_v  P \cap H)_{\neq}^n} 
 \prod\limits_{j=1}^n J (T_{x_j},H)^{-1}  
 \, d  x_1 \dots d  x_n \, dh du   .
\end{eqnarray*}
for $\e=0,1$.
Recall that the hyperplane $H=H(h,u)$ meets the coordinate axes in the points $\frac h{u_i} e_i$. 
Hence the halfspace $H_-$ contains at least one unit vector since $h \geq \min u_i$. W.l.o.g. we multiply by $n \choose k$, assume that it contains $e_{k+1}, \dots, e_n$, and thus the points of intersection satisfy 
\begin{equation}\label{eq:def-uk-kleiner}
\frac h{u_1} , \dots , \frac h{u_k}  \leq 1 \mbox{\ \ and\ \  } \frac h{u_{k+1}}, \dots, \frac h {u_n} \geq 1 
\end{equation}
with some $0 \leq k \leq n-1$.
Then the convex hull of $\frac h{u_1} e_1, \dots , \frac h{u_k} e_k, e_{k+1}, \dots, e_n$ is contained in $A_v P \cap H_-$ and we estimate
\begin{eqnarray}\label{eq:calculate-SAvP}
\l_{n-1}( \bd  A_v P \cap H_-) 
&\geq & \nonumber
\frac{1}{(n-1)!}  \sum\limits_{j=1}^n \prod\limits_{i \neq j} \min(1, \frac h{u_i}) 
\\ &\geq &
\frac{1}{(n-1)!}  \sum\limits_{j=1}^k \prod\limits_{i \leq k, i \neq j} \frac h{u_i} .
\end{eqnarray}
For $k=0,1$ we have 
$\l_{n-1}( \bd  A_v P \cap H_-) 
\geq 
\frac{1}{(n-1)!} $
and thus $E_\e = O(e^{- \frac{\underline{a}}{(n-1)!} N})$, so serious estimates are only necessary in the cases $2 \leq k \leq n-1$.
Next we use that $A_v P \subset [0, \t]^n$ for all $A_v$ and for some $\t >0$. Thus
\begin{equation}\label{eq:calculate-SAvPH}
 \l_{n-1}(A_v P \cap H)  \leq \l_{n-1}([0, \t]^n \cap H) \leq c_n h^{-1} \prod_{i=1}^k \frac h{u_i} \ \t^{n-k}  
\end{equation}
because $H$ meets the first $k$ coordinate axes in $\frac h{u_1}, \dots, \frac h{u_k}$.
This gives
\begin{eqnarray*}
E_v^{1+\e}
& \leq &
(2\overline a)^n 
c_n \sum _{k=0}^{n-1} {n \choose k} \t^{(n-k)(1+\e)} 
\int \limits_{S_+^{n-1}}   
\int \limits_{\begin{array}{c}
\scriptstyle  h \leq {u_1} , \dots , {u_k}  \\ \scriptstyle h \geq {u_{k+1}}, \dots, {u_n} 
\end{array} }
\left(1- \frac{\underline a}{(n-1)!}  \sum\limits_{j=1}^k \prod\limits_{i \leq k, i \neq j} \frac h{u_i}  \right)^{N}  
h^{-1} 
\\ && \hskip3cm 
\times \prod_1^k  \left( \frac h{u_i} \right)^{1+ \e}
\idotsint\limits_{(\bd A_v  P \cap H)_{\neq}^n} 
\prod\limits_{j=1}^n J (T_{x_j},H)^{-1}  
 \, d  x_1 \dots d  x_n \, dh du  .
\end{eqnarray*}
Now we deal with the inner integration with respect to $x_1, \dots, x_n$. We want to replace $\bd A_v  P \cap H$ by $\bd [0,1]^n \cap H$. The main point here is to estimate $J(T_x, H)^{-1}$ for $x \notin \bd \R_+^n$. 

In general we have $J (T_{x}, H ) \in [0,1]$ by definition. Recall that $x \in H$. The critical equality $J (T_{x},H) =0$ can occur only if $T_x=H$, thus if $H$ is a supporting hyperplane $H(h_{A_v P}(u),u)$ or $H(h_{A_v P}(-u),-u)$ to ${A_v P}$. 
Since $u \in S_+^{n-1}$, in the second case we have $h_{A_v P}(-u)=0$ and $x \in \bd \R_+^n$.

To exclude the first case we assume that $\l_{n-1}(\bd  A_v P \cap H_-) \leq \frac 12 $. In this case $H_-$ cannot contain the point $n^{- \frac 1{n-1}} (1, \dots, 1)^T$ since otherwise $\bd A_v P \cap H_-$ would contain $\bd A_v P \cap n^{- \frac 1{n-1}} [0,1]^n$ (recall that $u \in S_+^{n-1}$) and this part has surface are 1. 
Now we claim that there is a constant $c_{A_v P} >0$ such that 
\begin{equation}\label{def:cAvP}
J(T_x, H) \geq c_{A_v P} 
\ \text{ if }\  
\l_{n-1}(\bd  A_v P \cap H_-) \leq \frac 12
\ \text{ and }\ 
x \in \bd A_v P \setminus \bd \R_+^n.
\end{equation}
If such a positive constant would not exist then (by the compactness of $\bd A_v P$) there would be a convergent sequence  $(x_k, H_k) \to (x_0, H_0)$ with $ J (T_{x_k},H_k)  \to 0$, where $x_k \in H_k$ yields $x_0 \in H_0=H(h_0, u_0)$, $u_0 \in S_+^{n-1}$.
But in this case also 
$$ J(T_{x_{k}}, H_0) \to 0  $$
and $H_0$ is a supporting hyperplane at $x_0$.
Since $u_0 \in S_+^{n-1}$ this leads to two cases. The first case is that 
$$
x_0 \in H_0= H(h_{A_v P}(-u_0),-u_0)  ,\   x_0 \in \bd A_v P \cap \bd \R^n_+
, $$
but  $x_{k}$ is not in $\bd A_v P \cap \bd \R^n_+$ and thus contained in some other facet of $A_v P$. This implies $J(T_{x_{k}}, H_0) \nrightarrow 0$ as $x_k \to x_0$, which is impossible.
The second case is that  $x_0$ is contained  in $H_0=H(h_{A_v P}(u_0),u_0)$ where $(1,\dots, 1)^T \in H_{0-} $.  Since this point is in $A_v P$, but all $H_{k -}$ do not contain $n^{- \frac 1{n-1}} (1, \dots, 1)^T$ this  again contradicts the convergence $H_k \to H_0$.
Hence such a sequence $x_k$ cannot exist, and \eqref{def:cAvP} holds with some constant $c_{A_v P}>0$.
Thus from now on we assume that $\l_{n-1}(\bd  A_v P \cap H_-) \leq \frac 12$, take into account an error term of order 
\begin{equation}\label{Irre}
(1- \frac{1}2 \underline a)^{N}=e^{-cN},
\end{equation}
and obtain by \eqref{def:cAvP} that 
\begin{equation}\label{eq:J-1-estimate}
\int\limits_{(\bd A_v  P \setminus \bd \R^n_+)\cap H } 
J (T_{x},H)^{-1} \, d  x 
\leq
c_{A_v P}^{-1} \,
\l_{n-2} (\bd [0,\t]^n\cap H )
=
c_{A_v P}^{-1}
\int\limits_{\bd [0,\t]^n \cap H } dx 
\end{equation}
because $A_v P$ is contained in the larger cube $[0,\t]^n$.

In the following we denote by $F_c$ the union of the facets of $A_v P$ contained in $\bd \R_+^n$, and by $F_{0}$ the union of the remaining facets which cover $\bd A_v  P \setminus \bd \R^n_+$, $\bd A_v P  = F_c \cup F_0 $. Then
\begin{eqnarray*}
\idotsint\limits_{(\bd A_v  P \cap H)_{\neq}^n} 
\prod\limits_{j=1}^n J (T_{x_j},H)^{-1}  
 \, d  x_1 \dots d  x_n 
& \leq &
\idotsint\limits_{(F_c \cap H)_{\neq}^n} 
\prod\limits_{j=1}^n J (T_{x_j},H)^{-1}  
 \, d  x_1 \dots d  x_n 
\\&&  +
 \sum_{k=1}^n {n \choose k} 
\idotsint\limits_{(F_0 \cap H)^{k} \times (F_c \cap H)^{n-k}} 
\prod\limits_{j=1}^n J (T_{x_j},H)^{-1}  
 \ d  x_1 \dots d  x_n .
\end{eqnarray*}
Because of \eqref{eq:J-1-estimate} and using $F_c \subset \bd [0,\t]^n $, we obtain the upper bounds
$$
\idotsint\limits_{(F_0 \cap H)^{k} } 
\prod\limits_{j=1}^k J (T_{x_j},H)^{-1}  
 \ d  x_1 \dots d  x_k 
\leq
c_{A_v P}^{-n} 
\idotsint\limits_{(\bd [0,\t]^n \cap H)^k } d x_1 \dots d x_k 
$$
and
$$
\idotsint\limits_{(F_c \cap H)^{n-k}_{\neq}} 
\prod\limits_{j=k+1}^n J (T_{x_j},H)^{-1}  
 \ d  x_{k+1} \dots d  x_n 
\leq 
\idotsint\limits_{(\bd [0,\t]^n \cap H)^{n-k}_{\neq}} 
\prod\limits_{j=k+1}^n J (T_{x_j},H)^{-1}  
 \ d  x_{k+1} \dots d  x_n 
$$
where $(F_c \cap H)^{n-k}_{\neq}=(F_c \cap H)^{n-k}$ for $k \geq 1$. Combining these we get for $k \geq 1$
\begin{eqnarray*}
\idotsint\limits_{(F_0 \cap H)^{k} \times (F_c \cap H)^{n-k}} 
\prod\limits_{j=1}^n J (T_{x_j},H)^{-1}  
 \ d  x_1 \dots d  x_n 
&\leq &
c_{A_v P}^{-n} 
\idotsint\limits_{(\bd [0,\t]^n \cap H)^n} 
\prod\limits_{j=k+1}^n J (T_{x_j},H)^{-1}  
 \ d  x_{1} \dots d  x_n .
\end{eqnarray*}
Observe that for the $(n-1)$-dimensional polytope $[0,\t]^n \cap H $ the area of each $(n-2)$-dimensional facet is bounded by the sum of the areas of all other facets. Hence excluding a facet from the range of integration of the inner integral with respect to $x_1$ can be compensated by a factor 2,
$$
\int\limits_{\bd [0,\t]^n \cap H}  \ d  x_{1} 
\leq
2 \int\limits_{\bd [0,\t]^n \cap H} \prod\limits_{f=1}^n \I(|\{x_1, \dots, x_n\}\cap e_f^\perp | \leq n-1))\ d  x_{1} .
$$
Since $J (T_{x_j},H)$ is always less or equal one, this yields
\begin{eqnarray*}
\idotsint\limits_{(\bd A_v  P \cap H)_{\neq}^n} 
\lefteqn{
\prod\limits_{j=1}^n J (T_{x_j},H)^{-1}  
 \, d  x_1 \dots d  x_n 
}& &
\\ &\leq &
2 c_{A_v P}^{-n} 
\sum_{k=0}^n {n \choose k} 
\idotsint\limits_{{(\bd [0,\t]^n \cap H)^n} } 
\prod\limits_{j=1}^n J (T_{x_j},H)^{-1} 
\prod\limits_{f=1}^n \I(|\{x_1, \dots, x_n\}\cap e_f^\perp | \leq n-1))
 \ d  x_1 \dots d  x_n 
\\ & \leq &
2^{n+1}  c_{A_v P}^{-n}  
\idotsint\limits_{{(\bd [0,\t]^n \cap H)^n} } 
\prod\limits_{j=1}^n J (T_{x_j},H)^{-1}  
\prod\limits_{f=1}^n \I(|\{x_1, \dots, x_n\}\cap e_f^\perp | \leq n-1))
 \ d  x_1 \dots d  x_n.
\end{eqnarray*}
Substituting $x_i$ by $\t x_i$ we obtain
$$ 
\idotsint\limits_{(\bd A_v  P \cap H)_{\neq}^n} 
\prod\limits_{j=1}^n J (T_{x_j},H)^{-1}  
 \, d  x_1 \dots d  x_n 
\leq 
2^n \t^{n(n-2) } c_{A_v P}^{-n}  
\cE_0^1 \left( \frac h \t, u\right) 
$$
with $\cE_0^1 (\cdot)$ defined in front of Lemma \ref{le:rdsimpl-in-simpl-error}. We make use of Lemma \ref{le:rdsimpl-in-simpl-error}  for $\cE_0^1$
and the error term (\ref{Irre}): for  $m_i= \sum_j \I(f_j=i)$ we get
\begin{eqnarray*}
E_v^{1+\e}
& \leq &
c_{v,P}  \sum _{k=0}^{n-1} 
\int \limits_{S_+^{n-1}}   
\int \limits_{ \begin{array}{c}
\scriptstyle  h \leq {u_1} , \dots , {u_k}  \\ \scriptstyle h \geq {u_{k+1}}, \dots, {u_n} 
\end{array} }
\left(1- \frac{\underline a}{(n-1)!}  \sum\limits_{j=1}^k \prod\limits_{i \leq k, i \neq j} \frac h{u_i}  \right)^{N}  
h^{ -(n+1) }  
\\ && \hskip3.5cm 
\prod_{j=1}^k \left( \frac h{u_j}\right)^{n+1+\e}  \left( \sum_{\bm f \in \{1, \dots, n\}^n}
\prod_{j =1}^k \left( \frac {u_{j}}h \right)^{m_j}
 \right)
\, dh du  \ + O(e^{-cN}) 
\end{eqnarray*}
with 
$m_i \leq n-1$ for $j \leq k$, $\sum_1^k m_i \leq n$, and where $c_{v,P}$ depends on $n, c_{A_v P}, \max c_{n,k}$ and $\t$.
Next we use the substitution from Lemma \ref{le:cruc-subst}:
\begin{eqnarray*}
E_v^{1+\e}
& \leq &
c_{n, A_v P}   \sum _{k=0}^{n-1} 
\sum_{\bm f \in \{1, \dots, n\}^n} 
\idotsint \limits_{\begin{array}{c}
\scriptstyle  t_1, \dots t_k \leq 1  \\ \scriptstyle  t_{k+1}, \dots, t_n \geq 1
\end{array} }
\left(1- \frac{\underline a}{(n-1)!}  \sum\limits_{j=1}^k \prod\limits_{i \leq k, i \neq j} t_i  \right)^{N}  
\\ && \hskip5cm 
\prod_{i=1}^k t_i^{n-1+\e -m_i}
\prod_{i=k+1}^n t_i^{-2}
\, dt_1 \dots dt_n  \ + O(e^{-cN})   .
\end{eqnarray*}
The integrations with respect to $t_{k+1}, \dots , t_n$ are immediate since the only terms occurring are $t_i^{-2}$, and we have 
\begin{eqnarray*}
E_v^{1+\e}
& \leq &
c_{n, A_v P}  \sum _{k=0}^{n-1} 
\sum_{\bm f \in \{1, \dots, n\}^n} 
\int\limits_0^1 \dots \int\limits_0^1
\left(1- \frac{\underline a}{(n-1)!}  \sum\limits_{j=1}^k \prod\limits_{i \leq k, i \neq j} t_i  \right)^{N}  
\\ && \hskip5cm 
\prod_{i=1}^k t_i^{k-2 - (m_i- (n-k+1+\e))}
\, dt_1 \dots dt_k \ + O(e^{-cN})   .
\end{eqnarray*}
with $0 \leq m_i \leq n-1$. We set $l_i=m_i- (n-k+1+\e))$.
To apply Lemma \ref{le:main-asymp-exp2} in the case $\e=0$ we have to check that there are $i \neq j$ with $l_i, l_j > \frac L{k-1} -1$. Set $M=\sum_1^k m_i \leq n$. We have 
$$ 
l_i -\frac L{k-1} +1 =
m_i -(n-k+1) - \frac {\sum_{j=1}^k (m_j- (n-k+1))}{k-1} + 1 =
m_i  + \frac {n-M}{k-1} \geq 0
$$
and equality holds only if $M=n$ and $m_i=0$. But $M=n $ and $m_i \leq n-1$ imply that there are at least two different indices $i, j$ with $m_i >0$. Hence we may apply Lemma \ref{le:main-asymp-exp2} (and if $m_i \geq 1$ for all $i$ even Lemma \ref{le:main-asymp-exp1}) which tells us that the integral is bounded by 
$$
O(N^{-k+\frac L{k-1}} (\ln N)^{k-2}) =
O(N^{\frac {M-nk}{k-1}} (\ln N)^{n-3})  =
O(N^{-n} (\ln N)^{n-3})  .
$$
This finally proves 
\begin{equation}\label{eq:E_v^1}
 E_v^1 = 
O(N^{-n} (\ln N)^{n-3})  .
\end{equation}
In the case $\e=1$ we have $l_i=m_i- (n-k+2)$, and with $M=\sum_1^k m_i \leq n$ this gives
$$ 
l_i -\frac L{k-1} +1 =
m_i -(n-k+2) - \frac {\sum_{j=1}^k (m_j- (n-k+2))}{k-1} + 1 =
m_i + \frac {n+1-M}{k-1}  > 0
$$
since $M \leq n$. Thus the integral is of order
$
O(N^{-k+\frac L{k-1}})= 
O(N^{\frac {M-k(n+1)}{k-1}} )= 
O(N^{-n -\frac {n-1}{n-2}} )
$
and 
\begin{equation}\label{eq:E_v^2}
E_v^2 
=
O(N^{-n -\frac {n-1}{n-2}} ) .
\end{equation}

\subsection{The error of the second kind}\label{ssec:error2}

Here we have to evaluate the following estimate of $\E V_n(D_N)$ of (\ref{eq:F-estimate}), 
$$
f_1(P) \, \overline d  \int\limits_0^\t \dots \int\limits_0^\t   \left( 1- \underline a\l_{n-1} ( \bd \R_+^n \cap [0, \min(1,x_1) e_1, \dots, \min(1,x_{n-1}) e_{n-1}, e_n  ])   \right)^{N} dx_1 \dots dx_n 
.
$$
The integration with respect to $x_n$ is immediate. We may  assume without loss of generality that for $k = 0, \dots, n-1$ precisely $k$ of the coordinates of $x$ are bounded by $1$,
$$ x_1, \dots, x_k \leq 1,\ x_{k+1} , \dots, x_{n-1} \geq 1 .$$
For $k=0,1$ we have 
$$ \int\limits_0^\t \dots \int\limits_0^\t 
\left( 1- \frac{\underline a} {(n-1)!}   \right)^{N} dx_1 \dots dx_{n-1} 
= O(e^{- \frac{\underline{a}}{(n-1)!} N}) . $$
So we assume $2 \leq k \leq n-1$.
Then the volume of the boundary of the simplex is given by 
\begin{align*}
\l_{n-1} ( \bd \R_+^n \cap [0, \min(1,x_1) e_1, \dots, \min(1,x_{n-1}) e_{n-1}, e_n  ])
=&
\sum_{j=1}^n \frac 1{(n-1)!} \prod_{i \leq n, i \neq j} \min(1,x_i)
\\ \geq &
\sum_{j=1}^k \frac 1{(n-1)!} \prod_{i \leq k, i \neq j} x_i .
\end{align*}
Therefore we obtain
\begin{eqnarray*}
\int\limits_0^\t \dots \int\limits_0^\t 
\lefteqn{
\left( 1- \frac {\underline a }{(n-1)!} \sum_{j=1}^k  \prod_{i \leq k, i \neq j} x_i  \right)^{N} dx_1 \dots dx_{n-1} 
} &&
\\ & \leq &
\t^n \int\limits_0^1 \dots \int\limits_0^1 
\left( 1- \frac {\underline a \, \t^{k-1}}{(n-1)!} \sum_{j=1}^k  \prod_{i \leq k, i \neq j} t_i  \right)^{N-k} dt_1 \dots dt_{n-1} 
\\ & \leq &
\t^n \left( \frac {\underline a \, \t^{k-1}}{(n-1)!}\right)^{-\frac{k}{k-1} }  (k-1)^{-1} 
\Gamma \left(\frac {1}{k-1}  \right)^k 
\ N ^{-\frac{k}{k-1} } (1+ o(1))
\end{eqnarray*}
where we used Lemma \ref{le:main-asymp-exp1} with $l_i=k-2$, $L= k(k-2)$ which implies 
$l_i 
> \frac L{k-1}-1
= k-2 - \frac{1}{k-1}
$.
As $ k \leq n-1$ we obtain
\begin{equation*}
\E V_n(D_N)
= O(N^{- \frac{n-1}{n-2}})   .
\end{equation*}

\vskip0.5cm
\begin{appendix}
 
\centerline {\Large \bf Appendix: Some Asymptotic Expansions} 
 
\section{A useful substitution} \label{sec:useful-sub}

Let $\mathcal S_{n}$ is the set of all permutations of $\{1,\dots,n\}$. We start with the following observation.

\begin{lemma}\label{SetIntegration1}
Let $f:(0,\infty)^{n}\to(0,\infty)^{n}$ be defined by
$$
f_{j}(x)=\prod_{i\ne j}x_{i}
\hskip 20mm j=1,\dots,n.
$$
(i) The inverse function to $f$ is $g:(0,\infty)^{n}\to(0,\infty)^{n}$ given by
$$
g_{i}(x)=\frac{1}{x_{i}}\left(\prod_{k=1}^{n}x_{k}\right)^{\frac{1}{n-1}}.
$$
(ii) $f$ maps the open set $(0,1)^{n}$ bijectively onto
\begin{equation}\label{SetIntegration1-1}
\left\{y\in(0,1)^{n}\left|\forall i=1,\dots,n: \prod_{k=1}^{n}y_{k}< y_{i}^{n-1}\right.\right\}.
\end{equation}
(iii) The set
\begin{equation}\label{SetIntegration1-2}
\left\{x\in(0,\beta)^{n}\left|\forall i=1,\dots,n: \prod_{k=1}^{n}x_{k}<\beta\cdot x_{i}^{n-1}\right.\right\}
\end{equation}
equals
\begin{equation}\label{SetIntegration1-3}
\bigcup_{\pi\in\mathcal S_{n}}\{(x_{\pi(1)},\dots,x_{\pi(n)})|x\in M\},
\end{equation}
where $M$ is the set of all $x\in(0,\infty)^{n}$ with
$x_{n}\leq x_{n-1} \leq \cdots \leq x_{1}$ and 
\begin{eqnarray}\label{eq:IntArea1-1}
\beta\cdot x_{3}&>& x_{1}\cdot x_{2} \nonumber \\
\beta\cdot x_{4}^{2}&>& x_{1}\cdot x_{2}\cdot x_{3} 
\nonumber \\
&\vdots&   \\
\beta\cdot x_{n}^{n-2}&>& x_{1}\cdots x_{n-1} .
\nonumber
\end{eqnarray}
\end{lemma}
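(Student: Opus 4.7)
The plan is to handle the three parts in order, using the inverse map from (i) as the main tool for (ii), and then using the symmetry and the sorting of coordinates for (iii).

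For part (i), I would first note the key identity
\[
\prod_{k=1}^{n} f_k(x) = \prod_{k=1}^{n} \prod_{i\ne k} x_i = \Bigl(\prod_{i=1}^{n} x_i\Bigr)^{n-1},
\]
since each $x_i$ appears in every factor $f_k$ except $f_i$. This gives $\prod_i x_i = (\prod_k f_k(x))^{1/(n-1)}$, and since $f_j(x) = \prod_i x_i / x_j$, one recovers $x_j$ as claimed. The formula for $g$ is then verified by a direct computation of $f\circ g$: $f_j(g(y)) = \prod_{i\ne j} g_i(y) = (\prod_k y_k)^{(n-1)/(n-1)} / \prod_{i\ne j} y_i = y_j$.

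For part (ii), I would use the inverse from (i) to characterize the image. If $x \in (0,1)^n$ and $y = f(x)$, then clearly $y_j \in (0,1)$, and the identity of (i) rearranges to $x_i^{n-1} = \prod_k y_k / y_i^{n-1}$, so the condition $x_i < 1$ is equivalent to $\prod_k y_k < y_i^{n-1}$. Conversely, for any $y$ in the set \eqref{SetIntegration1-1}, the formula for $g$ produces a preimage in $(0,1)^n$, and (i) guarantees bijectivity.

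For part (iii), I would prove two inclusions. The set \eqref{SetIntegration1-2} is symmetric under coordinate permutations, so for $(\supset)$ it suffices to check that any $x \in M$ lies in \eqref{SetIntegration1-2}. The bound $x_1 < \beta$ follows from $\beta x_3 > x_1 x_2 > x_1 x_3$ (using $x_2 > x_3$), which then gives $x_i < \beta$ for all $i$; and the required product inequality has its binding case $i=n$, which reads $x_1\cdots x_{n-1} < \beta x_n^{n-2}$ and is exactly the last constraint of \eqref{eq:IntArea1-1}. For $(\subset)$, given $y$ in \eqref{SetIntegration1-2}, let $x$ be its descending sort. Applying $\prod y_k < \beta y_i^{n-1}$ at the smallest coordinate $x_n$ yields the $j=n-1$ constraint of \eqref{eq:IntArea1-1}; the remaining constraints for $j=2,\dots,n-2$ then follow by the chain
\[
x_1\cdots x_j = \frac{x_1\cdots x_{n-1}}{x_{j+1}\cdots x_{n-1}} < \frac{\beta\, x_n^{n-2}}{x_n^{n-1-j}} = \beta\, x_n^{j-1} \le \beta\, x_{j+1}^{j-1},
\]
where I used $x_{j+1}\cdots x_{n-1} > x_n^{n-1-j}$ (each of the $n-1-j$ factors exceeds $x_n$) and $x_n \le x_{j+1}$.

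The main obstacle is the $(\subset)$ direction of (iii): one must derive all $n-2$ constraints of \eqref{eq:IntArea1-1} from the single binding inequality arising at the minimum coordinate, which is where the descending order must be exploited carefully.
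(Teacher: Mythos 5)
Your proof is correct. Parts (i) and (ii) follow essentially the same route as the paper, hinging on the identity $\prod_k f_k(x) = \bigl(\prod_i x_i\bigr)^{n-1}$; in (ii) you phrase it slightly more slickly via $x_i^{n-1} = \prod_k y_k / y_i^{n-1}$, but the content is the same. For the $(\supset)$ direction of (iii) you are in fact a bit more careful than the paper: the paper only checks the product inequality $\prod x_k < \beta x_n^{n-1} \le \beta x_i^{n-1}$ and never explicitly verifies $x_i < \beta$; your observation $\beta x_3 > x_1 x_2 > x_1 x_3$ closes that small gap.

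Where you genuinely diverge is the $(\subset)$ direction of (iii), which is the substantive part. The paper proves by downward induction that each constraint of \eqref{eq:IntArea1-1} holds: for the $m$-th constraint it starts from the defining inequality $\beta x_{\pi(m)}^{n-1} > \prod_j x_{\pi(j)}$ and then successively substitutes the already-established lower bounds on $x_{\pi(n)}, x_{\pi(n-1)}, \dots, x_{\pi(m+1)}$ into the product, strengthening the bound on $x_{\pi(m)}$ at each step until only $\prod_{j\le m}$ remains. You instead derive all of \eqref{eq:IntArea1-1} in one stroke from the \emph{single} binding inequality $x_1\cdots x_{n-1} < \beta x_n^{n-2}$ (the defining condition at the minimum coordinate) together with the ordering, via the chain
\[
x_1\cdots x_j < \frac{\beta x_n^{n-2}}{x_{j+1}\cdots x_{n-1}} < \beta x_n^{j-1} \le \beta x_{j+1}^{j-1}.
\]
This is more elementary: it uses none of the defining conditions at $i<n$ and no induction hypothesis, and as a byproduct it shows that under the monotonicity $x_n<\cdots<x_1$ the last line of \eqref{eq:IntArea1-1} already implies all the others (so the defining system is redundant, which is not visible in the paper's argument). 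Both approaches are valid; yours is shorter and cleaner, while the paper's layered inequalities mirror the nested integration region it sets up afterwards.
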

\vskip 3mm

\begin{proof}
(i) For all $j=1,\dots,n$
$$
f_{j}(g(x))
=\prod_{i\ne j}g_{i}(x)
=\prod_{i\ne j}\left(\frac{1}{x_{i}}\left(\prod_{k=1}^{n}x_{k}\right)^{\frac{1}{n-1}}\right)
=x_{j}
$$
and for all $i=1,\dots,n$
$$
g_{i}(f(x))
=g_{i}\left(\prod_{k\ne 1}x_{k},\dots,\prod_{k\ne n}x_{k}\right)
=\left(\prod_{k\ne i}x_{k}\right)^{-1}\left(\prod_{j=1}^{n}\prod_{k\ne j}x_{k}\right)^{\frac{1}{n-1}}
=x_{i}.
$$
(ii) We show that $f$ maps an element $x \in (0,1)^{n}$ to an element of the set defined by 
(\ref{SetIntegration1-1}). Indeed, for all $x\in(0,1)$ we have
$$
\prod_{j\ne i}x_{j}\in(0,1).
$$
Moreover,
$$
\prod_{j=1}^{n}f_{j}(x)
=\prod_{j=1}^{n}\prod_{k\ne j}x_{k}
=\left(\prod_{j=1}^{n}x_{j}\right)^{n-1}.
$$
Since for all $i=1,\dots,n$ we have $x_{i}\in(0,1)$ we get for all
$i=1,\dots,n$
$$
\prod_{j=1}^{n}f_{j}(x)<\left(\prod_{j\ne i}x_{j}\right)^{n-1}=f_{i}(x)^{n-1}.
$$
Thus $f$ maps $(0,\infty)^{n}$ into the set defined by  (\ref{SetIntegration1-1}).

Now we show that $g$ maps an element $y$ of the set defined by (\ref{SetIntegration1-1}) to
an element of $(0,1)^{n}$. Since $\prod_{k=1}^{n}y_{k}< y_{i}^{n-1}$
$$
g_{i}(y)=\frac{1}{y_{i}}\left(\prod_{k=1}^{n}y_{k}\right)^{\frac{1}{n-1}}<1.
$$
(iii) We show that the set defined by $(\ref{SetIntegration1-3})$ contains the set defined by 
(\ref{SetIntegration1-2}). Let $x$ be an element of the set defined by (\ref{SetIntegration1-2}).
There is a permutation $\pi$ such that
$$
x_{\pi(n)} \leq x_{\pi(n-1)} \leq \cdots \leq x_{\pi(1)}
$$
and for all $i=1,\dots,n$
\begin{equation}\label{SetIntegration1-4}
\prod_{k=1}^{n}x_{\pi(k)}<  \beta x_{\pi(i)}^{n-1}.
\end{equation}
We prove by induction that $(x_{\pi(1)},\dots,x_{\pi(n)})\in M$.
The last inequality of (\ref{SetIntegration1-4}) follows from (\ref{SetIntegration1-2}) for $i=n$.
Suppose now that we have verified the last $k$ inequalities,
i.e.
\begin{eqnarray*}
\beta x_{\pi(n)}^{n-2}&>& x_{\pi(1)}\cdots x_{\pi(n-2)}\cdot x_{\pi(n-1)}  \\
\beta x_{\pi(n-1)}^{n-3}&>& x_{\pi(1)}\cdots x_{\pi(n-2)} \\
&\vdots&   \\
\beta x_{\pi(n-k+1)}^{n-k-1}&>& x_{\pi(1)}\cdots x_{\pi(n-k)} .
\end{eqnarray*}
By (\ref{eq:IntArea1-1}),
$$
\beta x_{\pi(n-k)}^{n-1}
>
\prod_{j=1}^{n}x_{\pi(j)} .
$$
We substitute for $x_{n}, \dots , x_{n-k+1}$ using the above inequalities already 
obtained.  
\begin{eqnarray*}
x_{\pi(n-k)}
&>&
\left( \frac 1 {\beta } \prod_{j=1}^{n}x_{\pi(j)} \right)^{\frac 1{n-1}}
 \\ &>&
\left(\left( \frac 1 {\beta } \right)^{1+\frac 1 {n-2}}  \prod_{j=1}^{n-1}x_{\pi(j)}^{1+\frac 1 
{n-2}} \  \right)^{\frac 1{n-1}}
=
\left( \frac{1}{\beta}  \prod_{j=1}^{n-1}x_{\pi(j)} \right) ^{\frac 1{n-2}} \ 
 \\ &>&
\left(\left( \frac 1 {\beta } \right)^{1+\frac 1 {n-3}}  \prod_{j=1}^{n-2}x_{j}^{1+\frac 1 
{n-3}} \  \right)^{\frac 1{n-2}}
=
\left( \frac{1}{\beta}  \prod_{j=1}^{n-2}x_{\pi(j)} \right) ^{\frac 1{n-3}} \ 
\\ & \vdots &
 \\ &>&
\left(\left( \frac 1 {\beta} \right)^{1+\frac 1 {n-k-1}}  \prod_{j=1}^{n-k}x_{\pi(j)}^{1+\frac 1 
{n-k-1}} \  \right)^{\frac 1{n-k}}
=
\left( \frac{1}{\beta}  \prod_{j=1}^{n-k}x_{\pi(j)} \right) ^{\frac 1{n-k-1}} \ 
\end{eqnarray*}
or, equivalently,
$$
x_{\pi(n-k)}^{n-k-2}
>\frac{1}{\beta}  \prod_{j=1}^{n-k-1}x_{\pi(j)}, 
$$
as long as $n-k-2 \geq 1$. Thus the last inequality is $x_{3}>\frac{1}{\beta} \cdot x_{1}\cdot x_{2}$.

Now we show that (\ref{SetIntegration1-3}) is contained in (\ref{SetIntegration1-2}).
It is enough to show that $M$ is a subset of (\ref{SetIntegration1-2}).

Let $x\in M$. By the the last inequality of (\ref{eq:IntArea1-1})
$$
x_{1}\cdot x_{2}\cdots x_{n}<\beta x_{n}^{n-1}.
$$
Since $x_{n}<x_{n-1}<\cdots<x_{1}$ we get for all $i=1,\dots,n$
$$
x_{1}\cdot x_{2}\cdots x_{n}<\beta x_{n}^{n-1}< \beta x_{i}^{n-1}.
$$
\end{proof}

Recall the definition (\ref{GrossJ})
$$
\cJ (\bm l)
=
\int \limits_0^1 \dots  \int\limits_0^1
\left(1-\a \sum_{i=1}^{n} \prod_{j \neq i} t_j \right)^{N-n}
\ \prod_{i=1}^n t_i^{n-2 - l_i} 
dt_1 \dots dt_n
.
$$

\begin{lemma}\label{le:substitute1}
Let $\alpha>0$, and $\bm l=(l_1, \dots, l_n)$, $L=\sum_1^n l_i $, with  $ l_i <n-1 $ for all $i=1,\dots,n$. Then we have
$$
\cJ (\bm l)
= 
\left(\frac1 {\a (N-n)}\right)^{n- \frac{L}{n-1}  }
\frac{1}{n-1} \!\!\!
 \underbrace{
 \int \limits_0^{\a (N-n)} \dots 
\int\limits_0^{\a (N-n)}
}_{\forall i:\ \prod_1^{n}  s_j^{\frac 1{n-1}} \leq(\a (N-n))^{\frac{1}{n-1}} s_i} \!
\left(1- \frac{\sum_{i=1}^{n} s_i }{N-n}  \right)^{N-n}  
\prod_{i=1}^n s_i^{l_{i} - \frac {L}{n-1} }  ds_n \dots ds_1  .
$$
\end{lemma}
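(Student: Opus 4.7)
The target formula strongly suggests the substitution $u_j = \prod_{i \neq j} t_i$, i.e.\ $u = f(t)$ in the notation of Lemma \ref{SetIntegration1}, followed by a rescaling $s_j = \alpha(N-n)\, u_j$. The plan is therefore:

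First, I would apply the change of variables $u_j = f_j(t) = \prod_{i\neq j} t_i$. By Lemma \ref{SetIntegration1}(i)--(ii) this map is a bijection of $(0,1)^n$ onto the set $\{u \in (0,1)^n : \prod_k u_k < u_i^{n-1}\text{ for all }i\}$, with inverse $t_i = u_i^{-1}(\prod_k u_k)^{1/(n-1)}$. The Jacobian matrix has entries $\partial f_j/\partial t_k = f_j/t_k$ for $k\neq j$ and $0$ for $k=j$, so factoring out $\mathrm{diag}(f_j)$ on the left and $\mathrm{diag}(1/t_k)$ on the right reduces the determinant to that of the all-ones-minus-identity matrix, giving $|\det Df| = (n-1)\prod_i t_i^{n-2}$. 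Hence $dt = \frac{du}{(n-1)\prod_i t_i^{n-2}}$, and combining with the integrand:
\begin{equation*}
\prod_i t_i^{n-2-l_i}\, dt = \frac{du}{(n-1)\prod_i t_i^{l_i}} = \frac{1}{n-1}\prod_i u_i^{l_i - L/(n-1)}\, du,
\end{equation*}
where the last equality uses $\prod_i t_i^{l_i} = \prod_i u_i^{-l_i}\cdot(\prod_k u_k)^{L/(n-1)}$. Meanwhile the kernel transforms cleanly: $\alpha\sum_i \prod_{j\neq i} t_j = \alpha\sum_i u_i$.

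Next I would rescale $s_j = \alpha(N-n)\, u_j$, so that $du_j = ds_j/(\alpha(N-n))$ and $(1 - \alpha\sum u_i)^{N-n} = (1 - \sum s_i/(N-n))^{N-n}$. The box $u \in (0,1)^n$ becomes $s \in (0,\alpha(N-n))^n$, and the constraint $\prod_k u_k < u_i^{n-1}$ becomes, after clearing powers of $\alpha(N-n)$ and taking $(n{-}1)$-th roots, exactly $\prod_{k} s_k^{1/(n-1)} < (\alpha(N-n))^{1/(n-1)}\, s_i$. Collecting the scaling factors, $du$ contributes $(\alpha(N-n))^{-n}$ while $\prod u_i^{l_i - L/(n-1)}$ contributes $(\alpha(N-n))^{L/(n-1)}$ (using $\sum_i (l_i - L/(n-1)) = -L/(n-1)$), yielding the asserted prefactor $(\alpha(N-n))^{-n+L/(n-1)}/(n-1)$.

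The only subtle point is bookkeeping: verifying that the product of the $t_i$-Jacobian weight with $\prod t_i^{n-2-l_i}$ telescopes to precisely $\prod u_i^{l_i-L/(n-1)}$ (with no leftover powers of $\prod u_k$), and that the hypothesis $l_i < n-1$ ensures integrability of $\prod t_i^{n-2-l_i}$ near the origin so that all manipulations are justified on the open set; everything else is algebraic simplification dictated by the statement.
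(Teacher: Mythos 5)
Your proposal is correct and follows essentially the same route as the paper: the change of variables $u_j=\prod_{i\neq j}t_i$ (Lemma~\ref{SetIntegration1}), computation of the Jacobian, and the rescaling $s_j=\alpha(N-n)u_j$. The only stylistic difference is in the Jacobian: you compute $\det Df$ directly by factoring out $\mathrm{diag}(f_j)$ and $\mathrm{diag}(1/t_k)$ to reduce to $\det(J_n-I_n)=(-1)^{n-1}(n-1)$, whereas the paper computes the inverse Jacobian $\det(\partial t_i/\partial v_j)$ via the explicit determinant identity \eqref{DetJ1}; both yield $|\det Df|=(n-1)\prod t_i^{n-2}$, so the difference is cosmetic.
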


\begin{proof}
By the assumption $ l_i <n-1 $ for all $i=1,\dots,n$ the integral
is finite.

We use the transformation of Lemma \ref{SetIntegration1}: 
For $i,j=1,\dots,n$ 
\begin{equation}\label{eq:Int1Int2-1}
v_j= \prod_{i \neq j} t_i 
\hskip 20mm\mbox{and}\hskip 20mm
t_i 
= \frac{\left(\prod_{k=1}^{n} v_k\right)^{\frac 1{n-1}}}{v_i}.
\end{equation}
The partial derivatives of $t$ with respect to $v$ are for $i\ne j$
$$
\frac{d t_i}{dv_j} 
= \frac 1{n-1} \frac{(\prod_{k=1}^{n} v_k)^{\frac 1{n-1}}}{v_jv_i} 
$$
and for $i=j$
$$
\frac{d t_i}{dv_i} 
= \left(\frac 1{n-1} -1\right)  \frac{(\prod_{k=1}^{n} v_k)^{\frac 1{n-1}}}{v_i^2} .
$$
This allows the computation of the Jacobian
\begin{eqnarray*}
J
&=&
\det\left(\frac{\partial t_{i}}{\partial v_{j}}
\right)_{i,j=1}^{n}   
\\ &=&
\left(\prod_{k=1}^{n} v_k\right)^{\frac {n}{n-1}}\det\left(
\begin{array}{ccccc}
(-1+\frac{1}{n-1})\frac{1}{v_{1}^{2}}
&\frac 1{n-1} \frac{1}{v_1v_2}
&\frac 1{n-1} \frac{1}{v_1v_3}
&\cdots&\frac 1{n-1} \frac{1}{v_1v_n}   \\
\frac 1{n-1} \frac{1}{v_1v_2}
&(-1+\frac{1}{n-1})\frac{1}{v_{2}^{2}}
&\frac 1{n-1} \frac{1}{v_2v_3}
&\cdots&
\frac 1{n-1} \frac{1}{v_2v_n}  \\
\vdots &\vdots  &\vdots &\vdots  &\vdots   \\
\frac 1{n-1} \frac{1}{v_1v_n}
&\frac 1{n-1} \frac{1}{v_2v_n}
&\frac 1{n-1} \frac{1}{v_3v_n}
&\cdots
&(-1+\frac{1}{n-1})\frac{1}{v_{n}^{2}}
\end{array}\right)  
\\ &=&
(n-1)^{-n}\left(\prod_{k=1}^{n} v_k\right)^{\frac {n}{n-1}-2}\det\left(
\begin{array}{ccccc}
2-n
&1
&1 
&\cdots&1  \\
1 
&2-n
&1 
&\cdots&
1   \\
\vdots &\vdots &\vdots&\vdots &\vdots     \\
1&1&1&\cdots&2-n
\end{array}\right) .
\end{eqnarray*}
The remaining determinant can be calculated explicitly by use of the formula
\begin{equation}\label{DetJ1}
\det\left(
\begin{array}{ccccc}
1+x_{1} &1 &1&\cdots&1\\
1&1+x_{2}&1&\cdots&1\\
\vdots&\vdots&   &   &\vdots\\
1&1&1&\cdots&1+x_{n}
\end{array}
\right)
=\prod_{i=1}^{n}x_{i}+\sum_{i=1}^{n}\left(\prod_{j\ne i}x_{j}\right) 
\end{equation}
which yields, with $x_i=1-n$,
\begin{equation}\label{DetJ2}
J
=\frac{(-1)^{n-1}}{n-1}\left(\prod_{k=1}^{n} v_k\right)^{-\frac {n-2}{n-1}} 
= \frac{(-1)^{n-1}}{n-1}\left(\prod_{i=1}^{n} t_i\right)^{-(n-2)} .
\end{equation}
Applying the transformation theorem gives
$$
\cJ(\bm l)
=
\frac{1}{n-1} \underbrace{\int \limits_0^1  \dots 
\int\limits_0^{1}}_{\forall i:\ \prod_1^{n} v_j^{\frac 1{n-1}} \leq v_i} 
\left(1- \a  \sum_{i=1}^{n} v_i   \right)^{N-n}  
\prod_{i=1}^n v_i^{l_{i}- \frac {L}{n-1}}  dv_n \dots dv_1.
$$
In the last step we substitute $v_{i}=\frac 1{\a(N-n)} s_{i}$ and obtain
\begin{eqnarray*}
\left(\frac1 {\a (N-n)}\right)^{n -\frac{L}{n-1}   }
\frac{1}{n-1}
 \underbrace{
 \int \limits_0^{\a (N-n)} \dots 
\int\limits_0^{\a (N-n)}
}_{\forall i:\ \prod_1^{n}  s_j^{\frac 1{n-1}} \leq(\a (N-n))^{\frac{1}{n-1}} s_i} 
\left(1- \frac{\sum_{i=1}^{n} s_i }{N-n}  \right)^{N-n}  
\prod_{i=1}^n s_i^{l_{i}- \frac {L}{n-1}}  ds_n \dots ds_1.
\end{eqnarray*}
\end{proof}

\begin{lemma}\label{le:substitute2}
Let $\alpha>0$, and $\bm l=(l_1, \dots, l_n)$, $L=\sum_1^n l_i $, with  $ l_i <n-1 $ for all $i=1,\dots,n$. Then we have
\begin{eqnarray*}
\cJ (\bm l)
&=& 
\left(\frac 1 {\a (N-n)}\right)^{n - \frac{L}{n-1} }
\frac{1}{n-1}
\sum_{\pi \in \cS_n}
\underbrace{ \int \limits_0^{\a (N-n)}  \int \limits_0^{s_1}  \dots  \int \limits_0^{s_{n-1}}  }_
{\forall i\geq 3 \colon  \left(\frac{ s_{1}\cdots s_{i-1}}{\a (N-n)} \right)^{\frac{1}{i-2}} \leq s_{i} }
\left(1- \frac{\sum_{i=1}^{n} s_{i}}{N-n}  \right)^{N-n}   
\\ && \hskip8cm 
\times \prod_{i=1}^n s_{i}^{l_{\pi(i)} - \frac {L}{n-1} }  
ds_n \dots ds_1 
\end{eqnarray*}
\end{lemma}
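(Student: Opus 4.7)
The plan is to derive Lemma \ref{le:substitute2} from Lemma \ref{le:substitute1} by decomposing the symmetric domain of integration according to the ordering of the coordinates, then invoking Lemma \ref{SetIntegration1}(iii) to identify the ``ordered'' piece. Setting $\b := \a(N-n)$, the constraint defining the domain in Lemma \ref{le:substitute1}, namely $\prod_{j=1}^n s_j^{1/(n-1)} \leq \b^{1/(n-1)} s_i$ for all $i$ (equivalently $\prod_j s_j \leq \b\, s_i^{n-1}$), coincides with the defining condition of the set \eqref{SetIntegration1-2}. Crucially, both this domain and the remaining factor $(1 - (\sum_i s_i)/(N-n))^{N-n}$ are invariant under permutations of $(s_1, \dots, s_n)$; only the weight $\prod_i s_i^{l_i - L/(n-1)}$ breaks this symmetry.

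Consequently, I would partition (up to nullsets) the domain into the $n!$ pieces $R_\pi = \{s : s_{\pi(1)} > s_{\pi(2)} > \cdots > s_{\pi(n)}\}$ indexed by $\pi \in \cS_n$. By Lemma \ref{SetIntegration1}(iii), each $R_\pi$ is the image of the ordered region $M$ (the piece corresponding to $\pi = \mathrm{id}$) under the coordinate permutation sending $(t_1, \dots, t_n)$ to $(t_{\pi^{-1}(1)}, \dots, t_{\pi^{-1}(n)})$. The set $M$ is exactly the domain written out in the statement of Lemma \ref{le:substitute2}: the chain $0 < s_n < s_{n-1} < \cdots < s_1$ together with the inequalities $\a(N-n) s_i^{i-2} > s_1 \cdots s_{i-1}$ for $i=3,\dots,n$, which are the same as $(s_1 \cdots s_{i-1}/(\a(N-n)))^{1/(i-2)} \leq s_i$. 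The upper bound $s_1 < \b$ is automatic on $M$: the last inequality in \eqref{eq:IntArea1-1} gives $\b s_n^{n-2} > s_1 \cdots s_{n-1} \geq s_1 s_n^{n-2}$, hence $s_1 < \b$.

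On each piece $R_\pi$ I would apply the measure-preserving substitution $t_j = s_{\pi(j)}$, so that $s_i = t_{\pi^{-1}(i)}$. The Jacobian is one, the symmetric factor is unchanged, and $\prod_i s_i^{l_i - L/(n-1)} = \prod_i t_{\pi^{-1}(i)}^{l_i - L/(n-1)}$. Reindexing the product via $j = \pi^{-1}(i)$ turns this into $\prod_j t_j^{l_{\pi(j)} - L/(n-1)}$. Summing over $\pi \in \cS_n$ and writing the integral over $M$ in the iterated form $\int_0^{\a(N-n)} \int_0^{s_1} \cdots \int_0^{s_{n-1}}$ (with the extra constraints on the $s_i$ for $i \geq 3$) yields exactly the identity claimed in Lemma \ref{le:substitute2}.

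There is no substantial obstacle here: the whole argument is a bookkeeping change of variables. The only items to check are the symmetry of the measure and of the exponential factor, the identification $\beta = \alpha(N-n)$ with the set \eqref{SetIntegration1-2}, and the automatic upper bound on $M$, all of which are either immediate or already contained in Lemma \ref{SetIntegration1}(iii).
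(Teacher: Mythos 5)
Your reconstruction is correct and takes the same route as the paper, which simply states that the result ``follows from Lemmata \ref{SetIntegration1} and \ref{le:substitute1}'' without filling in the details. You have correctly identified $\beta=\alpha(N-n)$, matched the domain of Lemma \ref{le:substitute1} with the set \eqref{SetIntegration1-2}, invoked Lemma \ref{SetIntegration1}(iii) to split it into the $n!$ permutation images of the ordered region $M$, used the permutation invariance of the exponential factor and the Lebesgue measure, and tracked the exponents $l_{\pi(i)}-L/(n-1)$ through the change of variables; the observation that $s_1<\alpha(N-n)$ is automatic on $M$ is a nice touch, though it is also implied by the original domain being a subset of $[0,\alpha(N-n)]^n$.
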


\begin{proof}
By the assumption $ l_i <n-1 $ for all $i=1,\dots,n$ the integrals are finite.
The result follows from Lemmata \ref{SetIntegration1} and \ref{le:substitute1}.
\end{proof}

\section{Proof of Lemma \ref{le:main-asymp-exp1}}\label{sec:main-asymp-exp1}

Our goal is to prove Lemma \ref{le:main-asymp-exp1}, which is the asymptotic formula
\begin{eqnarray*}
\cJ(\bm l)
&=&
\int \limits_0^1 \dots  \int\limits_0^{1} 
\left(1- \a \sum_i \prod_{j \neq i} t_j \right)^{N-n} 
\prod_{i=1}^n t_i^{n-2-l_i}  dt_1 \dots dt_n   
\\ &=&
\a^{-n + \frac{L}{n-1} }  (n-1)^{-1} 
\prod_{i=1}^n \Gamma \left( l_{i} -  \frac {L}{n-1} +1 \right) 
\ N ^{-n+\frac{L}{n-1} } 
\left(1+ O\left(N^{-\frac 1{n-2}(\min_k l_k- \frac L{n-1} +1)} \right) \right)
\end{eqnarray*}
as $N \to \infty$, where $n \geq 2$, $0 < \a < \frac{1}{n}$, and $\bm l=(l_1, \dots, l_n)$, $L=\sum_1^n l_i  $,  with $ n-1 > l_{i} > \frac {L}{n-1} - 1$. 

\begin{proof}
By Lemma \ref{le:substitute1} we have
\begin{eqnarray*}
\cJ (\bm l)
&=& 
\left(\frac1 {\a (N-n)}\right)^{ n-\frac{L}{n-1}   }
\frac{1}{n-1}
\int \limits_0^{\a (N-n) } \dots 
\int\limits_0^{\a (N-n) }
\prod_{i=1}^{n}  \I\left( \Big(\a(N-n) \Big)^{-\frac{1}{n-1}}  \prod_{i=1}^{n}  s_j^{\frac 1{n-1}} \leq  s_i \right) 
\\&& \hskip2cm 
 \left(1 -  \frac{\sum_{i=1}^{n} s_i }{N-n} \right)^{N-n}  
\prod_{i=1}^n s_i^{l_{i}- \frac {L}{n-1} }  ds_n \dots ds_1 .
\end{eqnarray*}
Because 
$ e^{t} (1-t) \geq (1+t)(1-t)=(1-t^2) $ for $|t| \leq 1$ and $(1-t^2)^m \geq 1- m t^2$,
we have
\begin{equation}\label{eq:exp-estimate}
0 \leq
e^{-x} - \left(1-\frac x{N-n}\right)^{N-n} 
\leq
e^{-x}\left(1- \left(1-\frac {x^2}{(N-n)^2}\right)^{N-n} \right) 
\leq
e^{-x}\, \frac {x^2}{N-n} 
\end{equation}
for $|x| \leq N-n$.
This yields 
\begin{eqnarray*}
\cJ (\bm l)
&=& 
\left(\frac1 {\a (N-n)}\right)^{ n-\frac{L}{n-1}   }
\frac{1}{n-1}
\int \limits_0^{\a (N-n) } \dots 
\int\limits_0^{\a (N-n) }
\prod_{i=1}^{n}  \I\left( \Big(\a(N-n) \Big)^{-\frac{1}{n-1}}  \prod_{i=1}^{n}  s_j^{\frac 1{n-1}} \leq  s_i \right) 
\\&& \hskip2cm 
e^{- \sum_{i=1}^n s_i} \left(1 +O\left( N^{-1} \sum_{i=1}^n s_i^2 \right) \right)  
\prod_{i=1}^n s_i^{l_{i}- \frac {L}{n-1} }  ds_n \dots ds_1 .
\end{eqnarray*}
Integrating the terms containing $O\left( N^{-1} \sum_{i=1}^n s_i^2 \right)$ yields incomplete Gamma functions times a term $O(N^{- n +\frac{L}{n-1} -1})$. The main term gives
\begin{eqnarray*}
\int \limits_0^{\a (N-n) } \dots 
\int\limits_0^{\a (N-n) }
\lefteqn{
\prod_{i=1}^{n}  \I\left( \Big(\a(N-n) \Big)^{-\frac{1}{n-1}}  \prod_1^{n}  s_j^{\frac 1{n-1}} \leq  s_i \right) 
e^{- \sum_{i=1}^n s_i} 
\prod_{i=1}^n s_i^{l_{i}- \frac {L}{n-1} }  ds_n \dots ds_1 
}&&
\\ &\leq&
\prod_{i=1}^n   \Gamma\left(l_{i}- \frac {L}{n-1} +1 \right) -
\int \limits_{D_N}
e^{- \sum_{i=1}^n s_i} 
\prod_{i=1}^n s_i^{l_{i}- \frac {L}{n-1} }  ds_n \dots ds_1 
\end{eqnarray*}
where 
$D_N$ is the set where at least one of the terms 
$\I\left( \Big(\a(N-n) \Big)^{-\frac{1}{n-1}}  \prod_1^{n}  s_j^{\frac 1{n-1}} \leq  s_i \leq \a (N-n)\right)  $ equals zero.
Thus $D_N$ is covered by the unions of the sets 
$$ 
D_{N,k} = \{s_k \colon s_k \geq \a (N-n) \},\ 
D_{N,k}' = \left\{ s_k \colon s_k^{n-2} \leq \Big(\a(N-n) \Big)^{-1}  \prod_{j \neq k}  s_j  \right\} .
$$
Integration on the set $D_{N,k}$ gives
\begin{eqnarray*}
\int \limits_{D_{N,k}} 
e^{- \sum_{i=1}^n s_i} 
\prod_{i=1}^n s_i^{l_{i}- \frac {L}{n-1} }  ds_n \dots ds_1 
& \leq &
\prod_{i \neq k} \Gamma  \left(l_{i}- \frac {L}{n-1} +1\right)
\int \limits_{\a (N-n)}^\infty 
e^{- s_k} 
s_k^{l_{k}- \frac {L}{n-1} }  ds_k
\\ & = &
O(e^{- \alpha N} N^{l_{i}- \frac {L}{n-1} })
\end{eqnarray*}
and the contribution of the sets $D_{N,k}'$ gives
\begin{eqnarray*}
\int \limits_{D_{N,k}'} 
\lefteqn{
e^{- \sum_{i=1}^n s_i} 
\prod_{i=1}^n s_i^{l_{i}- \frac {L}{n-1} }  ds_n \dots ds_1 
}&&
\\ & \leq &
(\a(N-n))^{-\frac 1{n-2}(l_k- \frac L{n-1} +1)} 
\idotsint \limits_{s_i \geq 0,\, s_k \leq \prod_{j \neq k}  s_j^{\frac 1{n-2}} } 
e^{- \sum_{j \neq k} s_j } 
\prod_{i=1}^n s_i^{l_{i}- \frac {L}{n-1} }  ds_n \dots ds_1 
\\ & = &
O(N^{-\frac 1{n-2}(l_k- \frac L{n-1} +1)}) .
\end{eqnarray*}
Hence the error term of the integration over $D_N$ is of order 
$$
O(N^{-\frac 1{n-2}(\min_k l_k- \frac L{n-1} +1)}) 
$$
for $ l_{i} - \frac {L}{n-1} +1 >0 $.
\end{proof}

\section{Proof of Lemma \ref{le:main-asymp-exp2}}\label{sec:main-asymp-exp2}

Our next goal is to prove Lemma \ref{le:main-asymp-exp2} which deals with the case when some of the $l_i$ are extremal in the sense that $l_i= \frac L{n-1}-1$. 
If for at least three different indices $i, j,k$ we have the strict inequality that $ l_{i}, l_j, l_k >  \frac {L}{n-1} -1 $, 
then we want to prove that
\begin{eqnarray*}
\cJ(\bm l)
&=&
\int \limits_0^1 \dots  \int\limits_0^{1} 
\left(1- \a \sum_i \prod_{j \neq i} t_j \right)^{N-n} 
\prod_{i=1}^n t_i^{n-2-l_i}  dt_1 \dots dt_n   
= 
O\left(  N^{-n+ \frac L{n-1}} (\ln N)^{n-3} \right).
\end{eqnarray*}
If for exactly two different indices $i, j$ we have the strict inequality that $ l_{i}, l_j >  \frac {L}{n-1} -1 $ and equality $l_k = \frac {L}{n-1} -1 $ for all other $l_k$, 
then we will show that
\begin{eqnarray*}
\cJ(\bm l)
&=&
c_n \a^{-n+ \frac L{n-1}} \Gamma(l_i- \frac L{n-1} +1)\Gamma(l_j- \frac L{n-1} +1)  N^{-n+ \frac L{n-1}} (\ln N)^{n-2} (1+O((\ln N)^{-1})) 
\end{eqnarray*}
with some $c_n > 0$. 
First we show that $\cJ(\bm l)$ is at least of order $ N^{-n+ \frac L{n-1}} (\ln N)^{n-2}$, and thus the strict inequality $c_n>0$.
\begin{lemma}\label{OrderMag1}
There is a constant $c_{n,\a}>0$ such that 
for all $n-1 > l_i \geq \frac L{n-1}-1$ we have
$$
\cJ(\bm l)
\geq c_{n,\a} N^{-n+ \frac L{n-1}} (\ln  N)^{\#\{i|l_{i}=\frac L{n-1} -1\}} 
$$
for $N$ sufficiently large. 
\end{lemma}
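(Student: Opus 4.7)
The plan is to invoke the substitution from Lemma \ref{le:substitute1} and then restrict the resulting $s$-integral to a carefully chosen sub-rectangle on which the integrand is explicitly bounded below, with the required $(\ln N)^k$ factor produced by the integrals $\int s_i^{-1}\,ds_i$ arising at the extremal indices. After reordering indices (WLOG), assume $l_1 = \cdots = l_k = L/(n-1) - 1$ and $l_{k+1},\ldots,l_n > L/(n-1)-1$. Lemma \ref{le:substitute1} gives
$$
\cJ(\bm l) = \frac{1}{n-1}\left(\frac{1}{\a(N-n)}\right)^{n-L/(n-1)}\!\!\int_R\!\! \left(1-\frac{S}{N-n}\right)^{N-n}\prod_i s_i^{l_i-L/(n-1)}\,ds,
$$
with $S = \sum_i s_i$ and $R$ the constraint region $\{s_i\in(0,\a(N-n)) :\prod_{j\neq i}s_j \leq \a(N-n)s_i^{n-2}\ \forall i\}$.

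I fix small constants $\eta,\delta > 0$ (depending only on $\a,n,k$) with $\eta^{k-1}(2\delta)^{n-k} < \a/2$ and define
$$
R' := \{s:\ s_i \in [N^{-1/(n-2)},\eta]\text{ for }i\leq k,\ s_i\in[\delta,2\delta]\text{ for }i > k\}.
$$
The lower endpoint $N^{-1/(n-2)}$ is pinned at the natural threshold: the constraint $\prod_{j\neq i}s_j \leq \a(N-n)s_i^{n-2}$, combined with boundedness of the other coordinates, forces precisely $s_i \gtrsim N^{-1/(n-2)}$. For $N$ large, $R' \subset R$: at extremal $i$, $\prod_{j\neq i}s_j \leq \eta^{k-1}(2\delta)^{n-k} < \a/2 \leq \a(N-n)N^{-1}\leq \a(N-n)s_i^{n-2}$, while at non-extremal $i$ the right-hand side $\a(N-n)\delta^{n-2}$ tends to infinity. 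Since $S$ is uniformly bounded on $R'$ by $C := k\eta + 2(n-k)\delta$, the kernel satisfies $(1-S/(N-n))^{N-n} \geq \tfrac{1}{2}e^{-C}$ for $N$ large.

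The product structure of $R'$ yields
$$
\int_{R'}\prod_i s_i^{l_i-L/(n-1)}\,ds = \prod_{i\leq k}\int_{N^{-1/(n-2)}}^{\eta}\frac{ds_i}{s_i}\cdot \prod_{i>k}\int_{\delta}^{2\delta}s_i^{l_i-L/(n-1)}\,ds_i,
$$
where each extremal factor equals $\ln\eta + (\ln N)/(n-2)\sim (\ln N)/(n-2)$ and each non-extremal factor is a positive constant (the exponent exceeds $-1$). Together with the kernel bound and the prefactor $(\a(N-n))^{-(n-L/(n-1))} \sim \mathrm{const}\cdot N^{-n+L/(n-1)}$, this gives $\cJ(\bm l) \geq c_{n,\a}N^{-n+L/(n-1)}(\ln N)^k$ for $N$ large. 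The only subtle point is the constraint check for the boundary case $k=1$, where $\eta^{k-1}=1$ so the $\delta$-part alone must dominate; this is precisely what the condition $\eta^{k-1}(2\delta)^{n-k} < \a/2$ arranges uniformly over all admissible $k \in \{0,1,\dots,n-2\}$.
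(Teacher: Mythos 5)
Your proof is correct and takes essentially the same approach as the paper: apply Lemma \ref{le:substitute1}, then restrict the $s$-integration to a product region on which the constraint $\prod_j s_j^{1/(n-1)} \le (\a(N-n))^{1/(n-1)} s_i$ is automatically satisfied, so that the integral factorizes and the $s_i^{-1}$-factors at the extremal indices produce the $(\ln N)^{\#\{i\colon l_i=L/(n-1)-1\}}$ growth. The only differences are cosmetic choices of sub-box: the paper restricts every coordinate to $[(\a(N-n))^{-1/(n-1)},1]$, while you use $[N^{-1/(n-2)},\eta]$ at extremal indices and a fixed interval $[\delta,2\delta]$ at the others; both give the same leading logarithmic order and both make the domain check equally straightforward.
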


\begin{proof} 
We use Lemma \ref{le:substitute1}. Since the integrand is positive, for $N$ sufficiently large 
\begin{eqnarray*}
\cJ(\bm l) 
&=& 
c_{n, \a} (N-n)^{-n + \frac L{n-1}} 
\underbrace{
 \int \limits_0^{\a (N-n)} \dots 
\int\limits_0^{\a (N-n)}
}_{\forall i:\ \prod_1^{n}  s_j^{\frac 1{n-1}} \leq(\a (N-n))^{\frac{1}{n-1}} s_i} 
\left(1- \frac{\sum_{i=1}^{n} s_i }{N-n}  \right)^{N-n}  
\prod_{i=1}^n s_i^{l_i - \frac L{n-1} }  ds_n \dots ds_1
\\
&\geq &
c_{n, \a} (N-n)^{-n + \frac L{n-1}} 
\underbrace{ \int \limits_0^{1} \dots  \int\limits_0^{1}
}_{\forall i:\ \prod_1^{n}  s_j^{\frac 1{n-1}} \leq(\a (N-n))^{\frac{1}{n-1}} s_i} 
\left(1- \frac{\sum_{i=1}^{n} s_i }{N-n}  \right)^{N-n}  
\prod_{i=1}^n s_i^{l_i - \frac L{n-1}  }  ds_n \dots ds_1
\\
& \geq &
c_{n, \a} (N-n)^{-n + \frac L{n-1}} 
\underbrace{ \int \limits_0^{1} \dots  \int\limits_0^{1}
}_{\forall i:\ 1 \leq(\a (N-n))^{\frac{1}{n-1}} s_i} 
\left(1- \frac{\sum_{i=1}^{n} s_i }{N-n}  \right)^{N-n}  
\prod_{i=1}^n s_i^{l_i - \frac L{n-1} }  ds_n \dots ds_1
\\
&\geq &
c_{n, \a} (N-n)^{-n + \frac L{n-1}} \left(1- \frac{n}{N-n}  \right)^{N-n}  \!\!\!
 \int \limits_{(\a (N-n))^{-\frac{1}{n-1}}}^{1} \! \dots \!
\int\limits_{(\a (N-n))^{-\frac{1}{n-1}}}^{1} \!
\prod_{i=1}^n s_i^{l_i - \frac L{n-1} }  ds_n \dots ds_1 
\\
&=& c_{n, \a} (N-n)^{-n + \frac L{n-1}} \left(1- \frac{n}{N-n}  \right)^{N-n}  \!\!\!
\,  \,  \,   \prod_{i=1}^n  \int \limits_{(\a (N-n))^{-\frac{1}{n-1}}}^{1} s_i^{l_i - \frac L{n-1} }  ds_i.
\end{eqnarray*}
For those $i$ with $l_i - \frac L{n-1}= -1$, 
$$
\int \limits_{(\a (N-n))^{-\frac{1}{n-1}}}^{1} s_i^{l_i - \frac L{n-1} }  ds_i= \frac{1}{n-1} (\ln \alpha( N-n))
\geq \frac{1}{2(n-1)} \ln N,
$$
and for those $i$ with $l_i - \frac L{n-1}> -1$, 
$$
\int \limits_{(\a (N-n))^{-\frac{1}{n-1}}}^{1} s_i^{l_i - \frac L{n-1} }  ds_i= 
\frac{1}{l_i - \frac{L}{n-1} +1}\left( 1- \alpha( N-n)^{-\frac{l_i - \frac{L}{n-1} +1}{n-1}}\right)
\geq 
\frac{1}{2(l_i - \frac{L}{n-1} +1)},
$$
both for $N$ sufficiently large.
\end{proof}

To show that this yields in fact the correct order we introduce 
in the light of Lemma \ref{le:substitute2} integrals of the type
\begin{eqnarray*}
\cS (\bm q) 
=
\underbrace{ \int \limits_0^{\a (N-n)}  \int \limits_0^{s_1}  \dots  \int \limits_0^{s_{n-1}}  }_
{\forall i\geq 3 \colon  \left(\frac{s_{1}\cdots s_{i-1}}{\a(N-n)} \right)^{\frac{1}{i-2}} \leq s_{i} }
\left(1- \frac{\sum_{i=1}^{n} s_{i}}{N-n}  \right)^{N-n}   
\prod_{i=1}^n s_{i}^{q_i}
\, ds_n \dots ds_1 \ .
\end{eqnarray*}

\begin{lemma}\label{le:part-asymp-exp2}
Assume $\a \leq \frac 1{2n}$, and that $\bm q=(q_1, \dots, q_n) \in \R^n$, $q_i \geq -1$, 
and there are $i \neq j$ with $q_i, q_j >-1$.
Then there is a constant $c_{\bm q,n} \geq 0$ independent of $\a$ such that 
$$
\cS (\bm q)= c_{\bm q,n} (\ln N)^{n-2} +O((\ln N)^{n-3})  
$$
as $N \to \infty$.
More precisely, if $q_1, q_2 > -1$, and $q_3= \dots=q_n=-1$, then 
\begin{equation}\label{eq:asymexp12}
\cS (q_1, q_2, -1, \dots ) + \cS(q_2, q_1, -1, \dots )= 
c_n \Gamma( q_{1}+1) \Gamma(q_{2} +1) (\ln N)^{n-2} +O((\ln N)^{n-3}) 
\end{equation}
with some $c_n \geq 0$.
If there exists an $m\geq 3$ with $q_m > -1$, then $c_{\bm q, n}=0$ and 
\begin{equation}\label{eq:asymexpneq12}
\cS (\bm q)= O((\ln N)^{n-3}) . 
\end{equation}
\end{lemma}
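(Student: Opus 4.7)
The plan is to integrate out $s_n, s_{n-1}, \ldots, s_3$ one at a time from the innermost variable to the outermost, and then handle $s_1, s_2$ jointly.

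\emph{Step 1: Preliminary reductions.} Using \eqref{eq:exp-estimate} with $x = \sum_i s_i$, I would first replace $\bigl(1-\sum_i s_i/(N-n)\bigr)^{N-n}$ by $e^{-\sum_i s_i}$. Because the integrand is concentrated on $\{s_i \le C\ln N\}$, the induced relative error is $O((\ln N)^2/N)$ and is absorbed into the $O((\ln N)^{n-3})$ remainder. The upper cutoff $s_1 \le \a(N-n)$ can be dropped at an exponentially small cost thanks to the $e^{-s_1}$ decay.

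\emph{Step 2: One-variable estimate.} For $L_i := (s_1\cdots s_{i-1}/(\a(N-n)))^{1/(i-2)}$ and $s_1, \ldots, s_{i-1}$ of bounded order, $L_i \to 0$ polynomially in $N^{-1}$. Direct evaluation of the resulting exponential or incomplete gamma integral yields
\[
\int_{L_i}^{s_{i-1}}\! e^{-s_i}\, s_i^{q_i}\, ds_i = \begin{cases} \dfrac{\ln(\a(N-n)) - \sum_{j<i}\ln s_j}{i-2} + O(1), & q_i=-1,\\[1.5ex] \Gamma(q_i+1) + O(L_i^{q_i+1}), & q_i>-1. \end{cases}
\]
Thus each variable with $q_i=-1$ adds one power of $\ln N$, while each variable with $q_i > -1$ contributes only a bounded factor.

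\emph{Step 3: Inductive assembly.} Setting $M_k := \#\{j > k : q_j = -1\}$, I would prove by downward induction on $k$ that
\[
I_k(s_1,\ldots,s_k) := \idotsint_{\text{constr.}} e^{-\sum_{j>k} s_j} \prod_{j>k} s_j^{q_j}\, ds_{k+1} \cdots ds_n
\]
admits an expansion $\sum_{j=0}^{M_k} P_{k,j}(\ln s_1,\ldots,\ln s_k)(\ln N)^{M_k - j}$, uniformly in bounded $s_1,\ldots,s_k$, where the $P_{k,j}$ are polynomials whose coefficients are uniformly integrable against $e^{-s_1-s_2} s_1^{q_1} s_2^{q_2}$ on $\{s_2 \le s_1\}$. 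Integrating the remaining two variables gives $\cS(\bm q) = c_{\bm q,n} (\ln N)^{M_2} + O((\ln N)^{M_2 - 1})$ with $M_2 = \#\{j \ge 3 : q_j = -1\}$. When some $q_m > -1$ with $m \ge 3$, $M_2 \le n-3$, which yields \eqref{eq:asymexpneq12}. When $q_3 = \cdots = q_n = -1$, $M_2 = n-2$; the symmetrization $\cS(q_1,q_2,q_3,\ldots) + \cS(q_2,q_1,q_3,\ldots)$ extends the $(s_1,s_2)$-domain from $\{s_2 \le s_1\}$ to the full positive quadrant, producing the factor $\Gamma(q_1+1)\Gamma(q_2+1)$ and hence \eqref{eq:asymexp12}.

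\emph{Main obstacle.} The main technical difficulty is the coupling between successive integrations: the leading $\frac{\ln N}{i-2}$ term from step $i$ carries sub-leading $\ln s_j$ pieces which, at later integrations, multiply the $s_j^{-1}$ (or $s_j^{q_j}$) weights and generate additional $(\ln N)^2$ and higher cross-terms. Careful inductive bookkeeping is required to show that these cross-terms organize into a polynomial in $\ln N$ of the claimed leading degree, and that the excluded tail regions $\{s_i \gg \ln N\}$ and $\{s_i < L_i\}$ truly contribute only to the $O((\ln N)^{M_k-1})$ remainder.
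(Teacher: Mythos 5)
Your overall strategy --- integrate out $s_n,\dots,s_3$ sequentially and keep track of a polynomial in $\ln N,\ln s_1,\dots$ by downward induction, then finish by integrating $s_1,s_2$ --- is exactly the paper's strategy (it is the content of equation \eqref{eq:ind-Jm} and Lemmas \ref{le:crit-int}--\ref{le:nondomterm}). However the one-variable estimates you state in Step~2 are not correct, and the induction in Step~3 is only announced, not executed.

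For $q_i=-1$ the antiderivative of $s^{-1}$ gives
$$
\int_{L_i}^{s_{i-1}} e^{-s_i}\, s_i^{-1}\,ds_i
= \ln s_{i-1}-\ln L_i+O(1)
= \frac{(i-3)\ln s_{i-1}+\ln(\a(N-n))-\sum_{j<i-1}\ln s_j}{\,i-2\,}+O(1),
$$
so for $i\ge 4$ your formula drops the term $\frac{i-3}{i-2}\ln s_{i-1}$. Since on the integration domain $s_{i-1}$ can be as small as a negative power of $N$ (it is only bounded below by $L_{i-1}$), this missing term is of the same order $\ln N$ as the term you kept, so it cannot be absorbed into $O(1)$. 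The paper keeps it explicitly (the $(-\ln s_{m-1})^{k+1}$ term in Lemma \ref{le:crit-int}), and its subsequent integrations over $s_{i-1},s_{i-2},\dots$ generate precisely the homogeneous polynomial $P_{n-m}$ of degree $n-m$ that you need for Step~3. Your Step~2 as written contradicts the polynomial structure you assert in Step~3.

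For $q_i>-1$ your formula $\Gamma(q_i+1)+O(L_i^{q_i+1})$ is an upper bound at best; the integral is an incomplete gamma and behaves like $O\bigl(\min(1,s_{i-1})^{q_i+1}\bigr)$ as $s_{i-1}\to0$. This decay is not cosmetic: the paper's Lemma \ref{le:non-crit-int} converts it into an explicit factor $s_2^{\,q_m+1}$, which is what makes the final integral over $s_1,s_2$ convergent (and smaller) when one of $q_1,q_2$ equals $-1$. Without tracking this factor you cannot prove \eqref{eq:asymexpneq12}: for instance with $q_1=-1,\ q_2=-1$ and two of $q_3,\dots,q_n>-1$ your ``bounded factor'' bound produces an integrand $s_1^{-1}s_2^{-1}$ which is not integrable.

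Two further omissions. First, the paper dissects the domain along $I_k=\{s_k\le 1\le s_{k-1}\}$, $k=3,\dots,n+1$: your Step~2 is tacitly restricted to $s_j\le 1$ for $j\ge 3$, and the regions where some $s_j>1$ require a separate (exponential-decay) argument, which you do not give. Second, the lemma asserts the leading constant $c_{\bm q,n}$ is independent of $\alpha$; the paper tracks this by showing the leading homogeneous part $P_{n-m}$ is $\alpha$-free while only the error polynomial may depend on $\alpha$, and your proposal does not address this.

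In short: right roadmap, but the two single-variable estimates that drive the induction are wrong as stated, the induction itself is unproved, the $s_j>1$ regions are untreated, and the $\alpha$-independence is unaddressed.
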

In other words, the only asymptotically contributing terms are those with $q_1,q_2 > -1$ and $q_{3}= \dots = q_{n}=-1. $ We will prove Lemma \ref{le:part-asymp-exp2} below and before this  show that it implies Lemma \ref{le:main-asymp-exp2}.

\begin{proof}[Proof of Lemma \ref{le:main-asymp-exp2}]
For $\bm l=(l_1, \dots, l_n)$, $L=\sum_1^n l_i $, with  $l_i < n-1$, Lemma \ref{le:substitute2} tells us  that
\begin{eqnarray*}
\cJ (\bm l)
&=& 
\left(\frac 1 {\a (N-n)}\right)^{n- \frac{L}{n-1} }
\frac{1}{n-1}
\sum_{\pi \in \cS_n}
\underbrace{ \int \limits_0^{\a (N-n)}  \int \limits_0^{s_1}  \dots  \int \limits_0^{s_{n-1}}  }_
{\forall i\geq 3 \colon  \left(\frac{ s_{1}\cdots s_{i-1}}{\a (N-n)} \right)^{\frac{1}{i-2}} \leq s_{i} }
\left(1- \frac{\sum_{i=1}^{n} s_{i}}{N-n}  \right)^{N-n}   
\\ && \hskip8cm \times
\prod_{i=1}^n s_{i}^{l_{\pi(i)}- \frac {L}{n-1} }  
ds_n \dots ds_1 
\\ &=&
\left(\frac 1 {\a (N-n)}\right)^{n- \frac{L}{n-1} }
\frac{1}{n-1}
\sum_{\pi \in \cS_n} \cS (\bm l_\pi - (\frac {L}{n-1}) \bm 1) .
\end{eqnarray*}
Assume that 
$ l_{ i} \geq \frac L{n-1} -1$ for all $i$, and there exists some tuple $ i \neq j$ with $ l_{i} ,  l_{j} > \frac L{n-1}  -1$. 
If $ l_{\pi(1)} ,  l_{\pi(2)} > \frac L{n-1}  -1$ and $ l_{\pi(i)} = \frac L{n-1} -1$ for all $i \geq 3$, we have that 
$$
\cS (\bm l_\pi - (\frac {L}{n-1}) \bm 1) = c_{\bm l_\pi - (\frac {L}{n-1}) \bm 1,n} (\ln N)^{n-2} +O((\ln N)^{n-3})
$$
where the constant is non-negative.
If $ l_{\pi(i)} > \frac L{n-1} -1$ for some $i \geq 3$, then
$$
\cS (\bm l_\pi - (\frac {L}{n-1}) \bm 1) = O( (\ln N)^{n-3} ) .
$$
Hence, depending on $\bm l=(l_1, \dots, l_n)$, there are two cases.
\begin{itemize}
 \item We have $ l_{k} = \frac L{n-1} -1$ for all except two indices $i \neq j$: Then there are $(n-2)!$ permutations which bring $l_i, l_j$ into the first two places with order $(l_i, l_j)$, resp. $(l_j, l_i)$ and allow for an application of \eqref{eq:asymexp12}. 
 All other permutations add terms of order $O( (\ln N)^{n-3})$.
 Summing over these possibilities, we have 
 \begin{eqnarray*}
\cJ (\bm l)
&=& 
\left(\frac 1 {\a (N-n)}\right)^{n- \frac{L}{n-1} }
\frac{(n-2)!}{n-1}
 c_n \Gamma(l_i- \frac L{n-1} +1)\Gamma(l_j- \frac L{n-1} +1)  
\\ && \hskip8cm \times 
(\ln N)^{n-2} (1+O((\ln N)^{-1}) 
\\ &=&
c_n \a^{-n+ \frac L{n-1}} \Gamma(l_i- \frac L{n-1} +1)\Gamma(l_j- \frac L{n-1} +1)  N^{-n+ \frac L{n-1}} (\ln N)^{n-2}  (1+O((\ln N)^{-1});
\end{eqnarray*}
 \item There exist at least three different $ l_i, l_j, l_k > \frac L{n-1} -1$. This yields
$$
\cJ(\bm l) = O\left(  N^{-n+ \frac L{n-1}} (\ln N)^{n-3} \right) .
$$
\end{itemize}
The implicit constants in $O(\cdot )$ may depend on $\a$.
These estimates imply Lemma \ref{le:main-asymp-exp2}. 
\end{proof}

\begin{proof}[Proof of Lemma \ref{le:part-asymp-exp2}]
The proof of the lemma is divided into four parts. 
Lemma \ref{le:crit-int} and Lemma \ref{le:non-crit-int} give the crucial estimates.
Equation \eqref{eq:asymexp12} when $q_3= \dots = q_n=-1$ follows from Lemma \ref{le:domterm},
$$
\cS (q_1, q_2, -1, \dots, -1) 
=
c_n \ln(N-n)^{n-2} \int \limits_0^{\a (N-n)}  
\int\limits_0^{s_1} 
\left(1- \frac{s_1+s_2}{N-n}  \right)^{N-n} 
s_1^{q_{1} }s_2^{q_{2} } ds_2 ds_1 
\ + O((\ln N)^{n-3}) .
$$
We replace $\left(1- \frac{s_1+s_2}{N-n}  \right)^{N-n}$ by the exponential function using \eqref{eq:exp-estimate}:
\begin{align*}
\int \limits_0^{\a (N-n) }  
\int\limits_0^{s_1} 
\left(1- \frac{s_1+s_2}{N-n}  \right)^{N-n} 
s_1^{q_{1} }s_2^{q_{2} } ds_2 ds_1 
=&
\int \limits_0^{\a (N-n) }  
\int\limits_0^{s_1} 
e^{-(s_1+s_2)} \left(1+ O(N^{-1} (s_1+s_2)^2) \right) 
s_1^{q_{1} }s_2^{q_{2} } ds_2 ds_1 
\\ =&
\int \limits_0^{\infty}  
\int\limits_0^{\infty} \I(s_2 \leq s_1) 
e^{- (s_1+s_2)} 
s_1^{q_{1} }s_2^{q_{2} } ds_2 ds_1 
\\ &- 
\int \limits_{\a (N-n) }^\infty  
\int\limits_0^{s_1} 
e^{-(s_1+s_2)} 
s_1^{q_{1} }s_2^{q_{2} } ds_2 ds_1 
+ O(N^{-1})   .
\end{align*}
Clearly the integral in the last line is of order 
$O(e^{-N} N^{q_1 })$.
Hence 
\begin{eqnarray*}
\left( \cS (q_1, q_2, -1, \dots, -1) + \cS (q_2, q_1, -1, \dots, -1) \right)
&=&
c_n \Gamma( q_{1}+1) \Gamma(q_{2} +1) (\ln N)^{n-2} +O ((\ln N)^{n-3}).
\end{eqnarray*}

Equation \eqref{eq:asymexpneq12} is proved in Lemma \ref{le:nondomterm}.
\end{proof}

\begin{lemma}\label{le:crit-int}
Assume $s_n \leq \dots \leq s_1 \leq \a (N-n) $, $3 \leq m \leq n$, and $s_{m-1} \leq 1$. Then for $\a \leq \frac 1{2n}$ and $k \geq 0$ we have 
\begin{align*}
&\frac 1{k+1} \left(1- \frac{\sum_1^{m-1} s_i}{N-n}\right)^{N-n} 
\Bigg( \left(-\frac{1}{m-2} \ln\left( \frac{s_{1}\cdots s_{{m-1}}}{\a (N-n)} \right) \right)^{k+1} -  
(-\ln s_{{m-1}})^{k+1} 
- 2 \Gamma (k+2)\Bigg)
\\ &
\leq 
\int\limits_{\left(\frac{s_{1}\cdots s_{{m-1}}}{\a (N-n)}
\right)^{\frac{1}{m-2}}}^{s_{{m-1}}} 
\left(1- \frac{\sum_1^{m} s_i}{N-n}  \right)^{N-n}   
s_{m}^{-1} (- \ln s_{m})^k ds_{m}
\leq 
\\ &
\frac 1{k+1} \left(1- \frac{\sum_1^{m-1} s_i}{N-n}\right)^{N-n} 
\Bigg( \left(-\frac{1}{m-2} \ln\left( \frac{s_{1}\cdots s_{{m-1}}}{\a (N-n)} \right) \right)^{k+1} -  
(-\ln s_{{m-1}})^{k+1}   \Bigg) .
\end{align*}
\end{lemma}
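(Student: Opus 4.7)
The key idea is that the factor $\bigl(1-\tfrac{\sum_1^m s_i}{N-n}\bigr)^{N-n}$ is essentially a perturbation of $\bigl(1-\tfrac{\sum_1^{m-1} s_i}{N-n}\bigr)^{N-n}$ in the range of integration because $s_m\leq s_{m-1}\leq 1$. The plan is to pull the latter factor out of the integral, evaluate the resulting elementary antiderivative for the upper bound, and control the multiplicative correction by a Bernoulli-type estimate for the lower bound.

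For the upper bound, since $s_m\geq 0$ we have $\bigl(1-\tfrac{\sum_1^m s_i}{N-n}\bigr)^{N-n}\leq\bigl(1-\tfrac{\sum_1^{m-1} s_i}{N-n}\bigr)^{N-n}$. Pulling this factor out and using the antiderivative
$$\int s_m^{-1}(-\ln s_m)^k\,ds_m=-\frac{(-\ln s_m)^{k+1}}{k+1},$$
the remaining integral evaluates to $\bigl[(-\ln a)^{k+1}-(-\ln b)^{k+1}\bigr]/(k+1)$, where $a=\bigl(\tfrac{s_1\cdots s_{m-1}}{\alpha(N-n)}\bigr)^{1/(m-2)}$ and $b=s_{m-1}$. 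Substituting gives precisely the claimed upper bound.

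For the lower bound, set $A=1-\tfrac{\sum_1^{m-1} s_i}{N-n}$. The assumption $\alpha\leq \tfrac{1}{2n}$ together with $s_i\leq\alpha(N-n)$ and $m\leq n$ yields $\sum_1^{m-1} s_i\leq(m-1)\alpha(N-n)\leq\tfrac12(N-n)$, so $A\geq \tfrac12$. Factoring out $\bigl(1-\tfrac{\sum_1^{m-1} s_i}{N-n}\bigr)^{N-n}$, the remaining ratio equals $\bigl(1-\tfrac{s_m}{A(N-n)}\bigr)^{N-n}$. I will write the integrand as a main term minus a correction:
$$\Bigl(1-\tfrac{s_m}{A(N-n)}\Bigr)^{N-n}=1-\Bigl[1-\Bigl(1-\tfrac{s_m}{A(N-n)}\Bigr)^{N-n}\Bigr].$$
The integral of the leading $1$ gives exactly $\bigl[(-\ln a)^{k+1}-(-\ln b)^{k+1}\bigr]/(k+1)$, matching the main term. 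For the correction, Bernoulli's inequality yields
$$0\leq 1-\Bigl(1-\tfrac{s_m}{A(N-n)}\Bigr)^{N-n}\leq (N-n)\cdot\tfrac{s_m}{A(N-n)}=\tfrac{s_m}{A}\leq 2s_m.$$
Multiplying by $s_m^{-1}(-\ln s_m)^k$ and integrating, the $s_m^{-1}$ cancels and I obtain a bound
$$2\int_a^b(-\ln s_m)^k\,ds_m\leq 2\int_0^1(-\ln s)^k\,ds=2\Gamma(k+1)=\frac{2\Gamma(k+2)}{k+1},$$
where the first step uses $0\leq a\leq b\leq s_{m-1}\leq 1$. Combining these pieces produces the asserted lower bound.

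No real obstacle is expected: the entire proof is a one-variable calculation where the only subtlety is ensuring $A$ stays bounded away from zero so that the Bernoulli estimate loses only a bounded constant. The assumption $\alpha\leq\tfrac{1}{2n}$ is used precisely for this purpose, and the bound $s_{m-1}\leq 1$ is exactly what makes the correction integral $\int_a^b(-\ln s_m)^k\,ds_m$ finite (with the explicit universal constant $\Gamma(k+1)$).
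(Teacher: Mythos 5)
Your proposal is correct and follows essentially the same route as the paper: pull out $(1-\tfrac{\sum_1^{m-1}s_i}{N-n})^{N-n}$, use $\alpha\le\tfrac1{2n}$ to get the factored-out base at least $\tfrac12$, bound the remaining factor below by $1-2s_m$ via Bernoulli, and integrate the resulting error against $s_m^{-1}(-\ln s_m)^k$ to produce the $2\Gamma(k+1)=\tfrac{2\Gamma(k+2)}{k+1}$ defect. The paper packages the same idea as the inequality $(1-S)(1-2x)\le 1-(S+x)\le 1-S$ for $S\le\tfrac12$ followed by Bernoulli on $(1-\tfrac{2s_m}{N-n})^{N-n}$, which is algebraically equivalent to your factor-out-and-correct decomposition.
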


\begin{proof}
We use the notation $S:= \frac{\sum_1^{{m-1}} s_i}{N-n}$. By assumption $\alpha \leq \frac 1{2n}$. This implies
$$S= \frac{\sum_1^{m-1} s_i}{N-n} \leq 
\frac{ns_1}{N-n} \leq n \a \leq \frac 12  . $$ 
And  for $S \leq \frac 12$ and $x\geq 0$ we have 
\begin{equation}\label{eq:1-absch}
(1-S)(1-2x)  
\leq \left( 1- (S+ x)\right) \leq (1- S) .
\end{equation}
The essential observation is that for $a,b \in (0,1)$ and $k \geq 0$
\begin{equation}\label{eq:ln-gamma}
\int_a^b (-\ln s)^k ds 
= \int_{- \ln b}^{- \ln a} t^k e^{-t} dt  
\leq \int_0^\infty t^k e^{-t} dt = \Gamma(k+1) 
\end{equation}
and
\begin{equation}\label{eq:s-1ln-gamma}
\int_a^b s^{-1} (-\ln s)^k ds 
= 
-\frac 1{k+1} (- \ln s)^{k+1} \Big\vert_a^b 
= 
\frac 1{k+1} (- \ln b)^{k+1}   
-\frac 1{k+1} (- \ln a)^{k+1}  .
\end{equation}
Because of \eqref{eq:1-absch} and \eqref{eq:s-1ln-gamma} we obtain
\begin{eqnarray*}
\int\limits_{\left(\frac{ s_{1}\cdots s_{{m-1}}}{\a (N-n)}
\right)^{\frac{1}{m-2}}}^{s_{{m-1}}} \lefteqn{
\left(1- S - \frac{ s_{m}}{N-n}  \right)^{N-n}   
s_{m}^{-1} (-\ln s_{m})^k ds_{m} 
} &&
\\ &\leq&
\left(1- S\right)^{N-n} 
\int\limits_{\left(\frac{ s_{1}\cdots s_{{m-1}}}{\a (N-n)}
\right)^{\frac{1}{m-2}}}^{s_{{m-1}}}
s_{m}^{-1} (-\ln s_{m})^k ds_{m}
\\ &=&
\frac 1{k+1} \left(1- S\right)^{N-n} 
\left( \left(-\frac{1}{m-2} \ln\left( \frac{ s_{1}\cdots s_{{m-1}}}{\a (N-n)} \right) \right)^{k+1} -  
(-\ln s_{{m-1}})^{k+1}  
\right).
\end{eqnarray*}
Again by \eqref{eq:1-absch} and \eqref{eq:s-1ln-gamma}, by the elementary inequality
$ (1-y)^{k}  \geq  (1-ky) $ for $y\leq 1$ and by \eqref{eq:ln-gamma}
\begin{eqnarray*}
\int\limits_{\left(\frac{ s_{1}\cdots s_{{m-1}}}{\a (N-n)}
\right)^{\frac{1}{m-2}}}^{s_{{m-1}}} \lefteqn{
\left(1- S - \frac{ s_{m}}{N-n}  \right)^{N-n}   
s_{m}^{-1} (- \ln s_{m})^k ds_{m} 
} &&
\\ &\geq&
\left(1- S\right)^{N-n} 
\int\limits_{\left(\frac{ s_{1}\cdots s_{{m-1}}}{\a (N-n)}
\right)^{\frac{1}{m-2}}}^{s_{{m-1}}}
(1-2  s_{m} ) s_{m}^{-1} (-\ln s_{m})^k ds_{m}
\\ &=&
\frac 1{k+1} \left(1- S\right)^{N-n} 
\Bigg( \left(-\frac{1}{m-2} \ln\left( \frac{ s_{1}\cdots s_{{m-1}}}{\a (N-n)} \right) \right)^{k+1} -  
\\ && \hskip5cm 
-\ (-\ln s_{{m-1}})^{k+1} 
-2(k+1) \Gamma (k+1)\Bigg) .
\end{eqnarray*}
This proves the lemma.
\end{proof}

With the help of this lemma we determine the asymptotic behavior of the dominant terms.

\begin{lemma}\label{le:domterm}
There is a constant $c_n$, such that for $q_1, q_2 > -1 $ and $\a \leq \frac 1{2n}$ we have
$$
\cS (q_1, q_2, -1, \dots, -1) 
=
c_n \left(\ln(N-n)\right)^{n-2} \int \limits_0^{\a (N-n)}  
\int\limits_0^{s_1} 
\left(1- \frac{s_1+s_2}{N-n}  \right)^{N-n} 
s_1^{q_{1} }s_2^{q_{2} } ds_2 ds_1 
\ + O((\ln N)^{n-3})
$$
\end{lemma}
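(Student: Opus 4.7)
The plan is to apply Lemma \ref{le:crit-int} iteratively to integrate out $s_n, s_{n-1}, \ldots, s_3$ in that order. Each integration picks up a factor of $\ln(N-n)$ at leading order, so after $n-2$ steps we produce a coefficient of $(\ln(N-n))^{n-2}$ in front of the residual double integral in $(s_1, s_2)$.

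The innermost integral in $\cS(q_1,q_2,-1,\ldots,-1)$ has the form $\int_{\mathrm{lower}_n}^{s_{n-1}} s_n^{-1}(1-S_n)^{N-n}\,ds_n$, which matches Lemma \ref{le:crit-int} with $m=n$ and $k=0$. The key algebraic device for iterating is the splitting
\[
-\tfrac{1}{m-2}\ln\tfrac{s_1\cdots s_{m-1}}{\alpha(N-n)} \;=\; L_m^{(m-1)} \;-\; \tfrac{1}{m-2}\ln s_{m-1},
\]
where $L_m^{(m-1)} := -\tfrac{1}{m-2}\ln\tfrac{s_1\cdots s_{m-2}}{\alpha(N-n)}$ depends only on $s_1,\ldots,s_{m-2}$. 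Applying this splitting at step $m$, whose integrand carries $\bigl(-\tfrac{1}{m-2}\ln\tfrac{s_1\cdots s_{m-1}}{\alpha(N-n)}\bigr)^{k+1}$ by induction, and expanding binomially, separates a principal $(L_m^{(m-1)})^{k+1}$ summand (independent of $s_{m-1}$, to be passed directly into the next integration) from cross-terms that contain positive powers of $(-\ln s_{m-1})$. Those cross-terms, together with the subtracted $(-\ln s_{m-1})^{k+1}$ from Lemma \ref{le:crit-int}, serve as the seed $(-\ln s_{m-1})^{k'}$ for the next application of Lemma \ref{le:crit-int}. Iterating $n-2$ times, only the fully-principal chain reaches $(\ln(N-n))^{n-2}$, while every deviation from it loses at least one $\ln(N-n)$ factor. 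The accumulated constant $c_n>0$ is a product of the reciprocals $\tfrac{1}{(m-2)(k+1)}$ collected along this principal chain.

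The main obstacle will be the systematic error bookkeeping. Three sources of loss must each be bounded by $O((\ln N)^{n-3})$: (i) the additive $-2\Gamma(k+2)$ correction in the lower bound of Lemma \ref{le:crit-int}, which when integrated over subsequent steps produces polynomials in $\ln(N-n)$ of degree strictly less than $n-2$; (ii) the non-principal binomial cross-terms described above, which by the counting just explained always fall short of the full $n-2$ logarithmic factors; and (iii) the requirement $s_{m-1}\leq 1$ in Lemma \ref{le:crit-int}, to be handled by partitioning the integration domain into the subregion with $s_3,\ldots,s_{n-1}\leq 1$ (where the iteration applies directly) and its complement. On the complement, a direct estimate $\int_{1}^{s_{m-1}}s_m^{-1}\,ds_m = \ln s_{m-1}$ combined with the $(1-S_{m-1})^{N-n}$ factor again forfeits at least one $\ln(N-n)$ factor. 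A straightforward induction on the iteration step verifies these three error bounds simultaneously. Finally, the residual factor $(1-S_2)^{N-n}=(1-(s_1+s_2)/(N-n))^{N-n}$ left after integrating out $s_3,\ldots,s_n$ matches exactly the kernel in the target formula, and multiplication by $s_1^{q_1}s_2^{q_2}$ and integration over $s_1,s_2$ yields the claim.
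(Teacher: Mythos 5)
Your overall strategy — iterating Lemma \ref{le:crit-int} to integrate out $s_n,\dots,s_3$, partitioning the domain according to which $s_i$ exceed $1$, and tracking the $\ln(N-n)$-degree — matches the paper's proof, and your error items (i) and (iii) are handled in the same spirit. The gap is in item (ii): the claim that ``only the fully-principal chain reaches $(\ln(N-n))^{n-2}$, while every deviation from it loses at least one $\ln(N-n)$ factor'' is false, and so is the product formula $c_n=\prod \tfrac{1}{(m-2)(k+1)}$ derived from it. A cross-term at step $m$ that carries a positive power of $(-\ln s_{m-1})$ gets \emph{promoted}, not demoted, at step $m-1$: feeding $(-\ln s_{m-1})^{k'}$ with $k'\geq 1$ into Lemma \ref{le:crit-int} returns a degree-$(k'+1)$ polynomial whose leading part contains $(\ln(N-n))^{k'+1}$, so the total $\ln(N-n)$-degree across the two steps is exactly the same as along the principal branch.

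To see this fail concretely, take $n=4$. The first integration gives $P_1=-\tfrac12\ln\tfrac{s_1s_2s_3}{N-n}+\ln s_3=\tfrac12\ln(N-n)-\tfrac12\ln s_1-\tfrac12\ln s_2+\tfrac12\ln s_3$. Your ``principal'' piece is the $s_3$-free part $L_4^{(3)}$ (coefficient $\tfrac12$ on $\ln(N-n)$); the cross-term is $\tfrac12\ln s_3=-\tfrac12(-\ln s_3)$. Integrating the principal piece over $s_3$ with $k=0$ contributes $\tfrac12(\ln(N-n))^2$ to $P_2$. But integrating the cross-term with $k=1$ yields $-\tfrac12 H_2$, whose coefficient of $(\ln(N-n))^2$ is $-\tfrac12\cdot\tfrac12=-\tfrac14$. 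Indeed $P_2=\tfrac14(\ln(N-n)-\ln s_1)^2$, so the true coefficient of $(\ln(N-n))^2$ is $\tfrac14$, not your chain product $\tfrac12$. A non-principal branch has contributed at the maximal order $(\ln N)^{n-2}$; treating it as $O((\ln N)^{n-3})$ error would therefore misestimate the leading term by a factor bounded away from $1$.

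The paper sidesteps exactly this by \emph{not} isolating a single chain: it propagates the full homogeneous polynomial $P_{n-m}$ of degree $n-m$ through the recursion $P_{n-m+1}=\sum_{k=0}^{n-m}H_{k+1}\,p_{n-m-k}$ (summing over \emph{all} powers $k$), and only at the very end extracts the coefficient of the pure $(\ln(N-n))^{n-2}$-monomial of $P_{n-2}$, identifying that number as $c_n$ without a closed form. The mixed monomials involving $\ln s_1,\ln s_2$ are shown to yield only $O((\ln N)^{n-3})$ \emph{after} the final integrations (equation \eqref{eq:domterm2}), not because they drop a $\ln(N-n)$ factor along the way. To repair your argument you would need to sum the contributions from every chain $(k_n,\dots,k_3)$ to the top monomial — which is the paper's polynomial bookkeeping in different notation — and relegate to the error only the mixed monomials of $P_{n-2}$, the additive corrections from Lemma \ref{le:crit-int}, and the domain pieces where some $s_i>1$.
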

\begin{proof}
We denote the range of integration of $\cS (\bm q)$ by $I$ and dissect this along the sets 
$$I_k :=\{0 \leq s_n \leq \dots \leq s_k \leq 1 \leq s_{k-1} \leq \dots  \leq s_3  \} , $$
for $k=3, \dots, n$, and 
$$I_{n+1} :=\{1 \leq s_n \leq \dots \leq s_3  \} .$$
The dominant term is the one with $I \cap I_{3}=I \cap \{0 \leq s_n \leq \ldots \leq s_3 \leq 1\}$  as range of 
integration. Hence in the first part of the proof we assume $s_i \leq 1$ for $i=3, \dots, n$.
For $m=2, \dots, n-1 $ we define 
$$
\cS _{n-m}(s_1, \dots, s_m)= 
\int\limits_{\left(\frac{ s_{1}\cdots s_{m}}{\a (N-n)}
\right)^{\frac{1}{m-1}}}^{s_{m}}
\cdots
\int\limits_{\left(\frac{s_{1}\cdots s_{n-1}}{\a (N-n)}\right)^{\frac{1}{n-2}}}^{s_{n-1}}
\left(1- \frac{\sum_{i=1}^{n} s_{i}}{N-n}  \right)^{N-n}   
\prod_{i=m+1}^n s_i^{-1} ds_n \dots ds_{m+1}
$$
and claim that 
\begin{eqnarray} \label{eq:ind-Jm}
\cS _{n-m}(s_1, \dots, s_m)
&=& 
\left(1- \frac{\sum_{i=1}^{m} s_{i}}{N-n}  \right)^{N-n}   
( P_{n-m} (\ln (N-n), \ln s_1, \dots, \ln s_m)  
\\ && \hskip3cm \nonumber +
E_{n-m} (\ln (N-n), \ln s_1, \dots, \ln s_m)) 
\end{eqnarray}
where $P_{n-m}$ is a homogeneous polynomial of degree $n-m$ independent of $\a$, and the error term $E_{n-m}$ is a 
function whose absolute value is bounded by a polynomial $Q_{n-m-1}$ of degree at most $n-m-1$ whose coefficients may depend on $\a$.
To shorten the following formulae we suppress the arguments of $P_{n-m}, \, E_{n-m}$ and 
$Q_{n-m-1}$ from now on. 

We use induction in $m$, starting with $m=n-1$ and going down to $m=2$.
For $m=n-1$ and $ P_0=1$ in the first step we obtain $\cS _{1}=(1- (N-n)^{-1}(\sum s_i) ) (P_1 +E_1)$ by Lemma 
\ref{le:crit-int} (where $k=0$)
with 
$$-\frac 1{n-2}\ln\left(\frac{1 }{\a}\right) -2 =-Q_0  \leq E_1 \leq - \frac 1{n-2} \ln\left(\frac{1 }{\a}\right) $$
and
$$
P_1= 
-\frac{1}{(n-2)} \ln\left( \frac{s_{1}\cdots s_{{n-1}}}{N-n} \right) 
+ \ln s_{{n-1}}
.$$
Assume that \eqref{eq:ind-Jm} holds. Then
\begin{eqnarray*}
\cS _{n-m+1} (s_1, \dots, s_{m-1})
&=& 
\int\limits_{\left(\frac{ s_{1}\cdots s_{m-1}}{\a (N-n)}
\right)^{\frac{1}{m-2}}}^{s_{m-1}}
\left(1- \frac{\sum_{i=1}^{m} s_{i}}{N-n}  \right)^{N-n} 
(P_{n-m} + E_{n-m})   s_m^{-1} 
ds_m
\end{eqnarray*}
with 
\begin{equation}\label{eq:defp}
P_{n-m} = 
\sum_{k=0}^{n-m} (-\ln s_m)^k   p_{n-m-k}  
\end{equation}
where the coefficients $p_{n-m-k}$ are polynomials in $\ln(N-n), \ln s_1, \dots, \ln s_{m-1} $ of 
degree $(n-m-k)$ independent of $\a$. And the absolute value of $E_{n-m}$ is bounded by a polynomial $Q_{n-m-1}$ of 
degree $n-m-1$.

In Lemma \ref{le:crit-int}  both bounds are - up to the term $(1-S)^{N-n}$ - polynomials of degree 
$k+1$ where the  sum of the monomials of top degree $k+1$ is denoted by $H_{k+1}$ and is independent of $\a$.  We have
\begin{align*}
H_{k+1} =
\frac 1{k+1} 
\Bigg( \left(\frac{-1}{m-2}\right)^{k+1}  \left( \ln\left( \frac{s_{1}\cdots s_{{m-1}}}{N-n} 
\right) \right)^{k+1} -  
(-\ln s_{{m-1}})^{k+1}   \Bigg).
\end{align*}
Thus by Lemma \ref{le:crit-int} the integration of $P_{n-m}$
yields homogeneous polynomials $H_{k+1}$ of 
degree $k+1$, 
and hence a homogeneous polynomial $P_{n-m+1}$ of 
degree $n-m+1$ 
\begin{align*}
P_{n-m+1}  = 
\sum_{k=0}^{n-m} H_{k+1}  p_{n-m-k}
\end{align*}
independent of $\a$.
The other terms of lower degree and the error term in Lemma \ref{le:crit-int} produce 
error terms which can be bounded by a polynomial of degree $k$. Multiplied by the polynomials 
$p_{n-m-k}$ from the representation \eqref{eq:defp} this yields an error term $E'_{n-m+1}$ bounded 
by a polynomial $Q'_{n-m}$ in $\ln(N-n), \ln s_1, \dots, \ln s_{m-1}$ of order $n-m$,
$$ 
|E'_{n-m+1} | \leq 
Q'_{n-m}  .  
$$ 
For the absolute value of the integration over $E_{n-m}$ we obtain
\begin{eqnarray*}
| E''_{n-m+1}  |
&=&
\left| 
\int\limits_{\left(\frac{s_{1}\cdots s_{m-1}}{\a (N-n)}
\right)^{\frac{1}{m-3}}}^{s_{m-1}}
\left(1- \frac{\sum_{i=1}^{m} s_{i}}{N-n}  \right)^{N-n} 
E_{n-m}   s_m^{-1} 
ds_m
\right|
\\ &\leq&
\int\limits_{\left(\frac{ s_{1}\cdots s_{m-1}}{\a (N-n)}
\right)^{\frac{1}{m-3}}}^{s_{m-1}}
\left(1- \frac{\sum_{i=1}^{m} s_{i}}{N-n}  \right)^{N-n} 
|E_{n-m} | s_m^{-1} 
ds_m
\\ &\leq&
\int\limits_{\left(\frac{ s_{1}\cdots s_{m-1}}{\a (N-n)}
\right)^{\frac{1}{m-3}}}^{s_{m-1}}
\left(1- \frac{\sum_{i=1}^{m} s_{i}}{N-n}  \right)^{N-n} 
Q_{n-m-1}   s_m^{-1} 
ds_m 
\leq 
Q''_{n-m}  
\end{eqnarray*}
where in the third line we used Lemma \ref{le:crit-int} again which leads to a polynomial  
$Q''_{n-m}$ of degree $n-m$.
Hence $E_{n-m+1} := E'_{n-m+1}+E''_{n-m+1}$ is bounded by $Q'_{n-m} + Q''_{n-m}$, a 
polynomial of degree $n-m$. This proves \eqref{eq:ind-Jm}.
On $I \cap I_3$ 
we take $\min(s_2, 1)$ as the upper 
limit of integration with respect to $s_3$.

Thus we obtain on $I \cap I_3$ 
\begin{eqnarray*}
\cS _{n-2}(s_1, s_2)
&=& 
\left(1- \frac{s_1+s_2}{N-n}  \right)^{N-n} 
\\ &&
 \times \Big(P_{n-2} (\ln(N-n), \ln s_1, \ln (\min(s_2,1)))  + E_{n-2} (\ln(N-n), \ln s_1, \ln (\min(s_2,1))) \Big) .
\end{eqnarray*}

\bigskip
It remains to consider the last two integrations with $q_{1},q_{2} > -1$. The dominating term in 
Lemma \ref{le:domterm} is the term of $P_{n-2}$  with $(\ln (N-n))^{n-2}$,
\begin{equation}\label{eq:domterm}
(\ln(N-n))^{n-2} \int \limits_0^{\a (N-n)}  
\int\limits_0^{s_1} 
\left(1- \frac{s_1+s_2}{N-n}  \right)^{N-n} 
s_1^{q_{1} }s_2^{q_{2} } ds_2 ds_1 .
\end{equation}
For the terms $(\ln (N-n))^k  (\ln s_1)^{j_1} (\ln (\min(s_2,1)))^{j_2}  $  with $k= 
n-2-j_1-j_2<n-2$ 
we obtain 
\begin{eqnarray}
\int \limits_0^{\a (N-n)} 
\lefteqn{
\int\limits_0^{s_1} 
\left(1- \frac{s_1+s_2}{N-n}  \right)^{N-n} 
\left| (\ln (N-n))^k  (\ln s_1)^{j_1} (\ln (\min(s_2,1)))^{j_2}  
s_1^{q_{1} }s_2^{q_{2} } \right| 
 ds_2 ds_1 
 }&& \nonumber
\\ &\leq& \nonumber
(\ln (N-n))^k
\int \limits_0^{\infty}  
\int\limits_0^{\infty} 
e^{-s_1-s_2} 
|\ln s_1|^{j_1} |\ln s_2|^{j_2}  
s_1^{q_{1} }s_2^{q_{2} } 
 ds_2 ds_1 
\\&=& \label{eq:domterm2}
O((\ln (N-n))^k)
\end{eqnarray}
with $k \leq n-3$, since integrals of the form $\int_0^{\infty} e^{-t} t^k |\ln t|^j dt$ are 
convergent.

For the integral over the error term we get
\begin{eqnarray*}
\Big|  \int \limits_0^{\a (N-n)}  
\int\limits_0^{s_1} 
\lefteqn{
\left(1- \frac{s_1+s_2}{N-n}  \right)^{N-n} 
E_{n-2} (\ln(N-n), \ln s_1, \ln s_2)
 s_1^{q_{1} }s_2^{q_{2} } ds_2 ds_1 \Big|
}&&
\\ &\leq&
 \int \limits_0^{\a (N-n)}  
\int\limits_0^{s_1} 
\left(1- \frac{s_1+s_2}{N-n}  \right)^{N-n} 
Q_{n-3} (\ln(N-n), \ln s_1, \ln (\min(s_2,1)))  
 s_1^{q_{1} }s_2^{q_{2} } ds_2 ds_1
\\ &=&
O((\ln(N-n))^{n-3}) .
\end{eqnarray*}
Combining these estimates yields Lemma \ref{le:domterm} for $s_3 \leq 1$, i.e. on $I \cap I_3$. 

\medskip
It remains to show that the integration over $I_4\cup\dots \cup I_{n+1}$ is of order $O((\ln 
N)^{n-3})$.
Consider the range of integration $I \cap I_k$, $k \geq 4$ with
$$I_k :=\{0 \leq s_n \leq \dots \leq s_k \leq 1 \leq s_{k-1} \leq \dots  \leq s_3  \} . $$
Then the integrations up to $s_k$ just yield \eqref{eq:ind-Jm}
and in the remaining integrations we have 
\begin{eqnarray}\label{eq:rest-int}
\Big| \int\limits_{I \cap I_k} 
\lefteqn{
\Big(1 -  \frac{\sum_{i=1}^{k-1} s_i}{N-n}  \Big)^{N-n}  
\prod_{i=1}^{k-1} s_i^{-1 } 
\cS _{n-k+1}(s_1, \dots, s_{k-1})
ds_{k-1} \dots ds_1 \Big| 
} &&
\\ &\leq& \nonumber
\int\limits_1^{\infty}
\cdots
\int\limits_1^{\infty}
 \exp\Big\{ -  \sum_{i=1}^{k-1} s_i  \Big\}  
| \cS _{n-k+1}(s_1, \dots, s_{k-1}) |
ds_{k-1} \dots ds_1
\\ & = &  \nonumber
O\left( (\ln(N-n))^{n-k+1} \right) =
O\left( (\ln(N-n))^{n-3} \right)
\end{eqnarray}
since $\cS _{n-k+1}$ is bounded by polynomials in $\ln (N-n), \ln(s_1), \dots, \ln s_{k-1} $ 
of order $n-k+1$, and all occurring integrals 
$$
\int\limits_1^{\infty}
\cdots
\int\limits_1^{\infty}
 \exp\Big\{ -  \sum_{i=1}^{k-1} s_i  \Big\} 
(\ln s_1)^{j_1} \dots (\ln s_{k-1})^{j_{k-1}}
ds_{k-1} \dots ds_1
$$
are 
finite.
This finishes the proof of lemma \ref{le:domterm}.
\end{proof}

\bigskip
For the second part of Lemma \ref{le:part-asymp-exp2}, i.e. for equation (\ref{eq:asymexpneq12})  we investigate the terms with $ q_{m} > -1 $ for 
some $m \in \{3, \dots,n\} $.
We start by restating the following simple analogue of Lemma \ref{le:crit-int}. We recall that $S=\frac{\sum_{i=1}^{m-1} s_i}{N-n}$.

\begin{lemma}\label{le:non-crit-int}
For $q _m> -1$, $k \geq 0$, $s_{{m-1}} \leq 1$ and $s_{m-1} \leq s_2$ we have
$$
\int\limits_{\left(\frac{s_{1}\cdots s_{{m-1}}}{\a (N-n)}
\right)^{\frac{1}{m-2}}}^{s_{{m-1}}} 
\left(1- S - \frac{ s_{m}}{N-n}  \right)^{N-n}   
s_{m}^{q_m} (- \ln s_{m})^k ds_{m}
\leq
c_{k,q_m}
\left(1- S\right)^{N-n} s_{2}^{q_m+1}
(- \ln s_{m-1})^k .
$$
\end{lemma}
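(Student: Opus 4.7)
Lemma \ref{le:non-crit-int} is the easy companion of Lemma \ref{le:crit-int}: because $q_m>-1$, the factor $s_m^{q_m}$ is integrable down to $0$, so no logarithmic factor arises from the lower limit of integration, and the dominant behaviour comes from $s_m$ near the upper limit $s_{m-1}$. The plan is therefore to (i) drop the $-s_m/(N-n)$ piece in the base in order to pull $(1-S)^{N-n}$ out of the integral and enlarge the range of integration to $[0,s_{m-1}]$; (ii) compute the remaining one-dimensional integral via the substitution $u=-\ln s_m$, turning it into an incomplete-$\Gamma$-type integral; and (iii) translate the resulting bound back into the $s$-variables using the hypothesis $s_{m-1}\le s_2$.

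For step (i) the bound $(1-S-s_m/(N-n))^{N-n}\le (1-S)^{N-n}$ is immediate from $s_m\ge 0$, so it suffices to estimate $\int_0^{s_{m-1}} s_m^{q_m}(-\ln s_m)^k\, ds_m$. After the substitution $u=-\ln s_m$ this equals
$$
\int_{-\ln s_{m-1}}^{\infty} e^{-(q_m+1)u}\,u^k\,du.
$$
Writing $u^k=((u-a)+a)^k\le 2^k((u-a)^k+a^k)$ with $a=-\ln s_{m-1}\ge 0$ and integrating each piece against $e^{-(q_m+1)u}$ yields the elementary tail estimate
$$
\int_{a}^{\infty} e^{-(q_m+1)u}\,u^k\,du \;\le\; c_{k,q_m}\,e^{-(q_m+1)a}\bigl(a^k+1\bigr) \;=\; c_{k,q_m}\,s_{m-1}^{q_m+1}\bigl((-\ln s_{m-1})^k+1\bigr).
$$
Since $q_m+1>0$ and $s_{m-1}\le s_2$ (both $\le 1$), we may replace $s_{m-1}^{q_m+1}$ by $s_2^{q_m+1}$. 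Reinserting the factor $(1-S)^{N-n}$ gives the claim, with the residual constant $+1$ absorbed into $c_{k,q_m}$.

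The main obstacle is precisely this last absorption: as $s_{m-1}\to 1^-$ the factor $(-\ln s_{m-1})^k$ tends to $0$ for $k\ge 1$, whereas the left-hand side does not vanish, so the bound as written is only a valid uniform estimate because in the regime $-\ln s_{m-1}\le 1$ the left-hand side is itself bounded by a $q_m$-dependent constant, which is absorbed into $c_{k,q_m}$. In the actual application (Lemma~\ref{le:domterm} and its $I\cap I_k$ variants) only the large-$a$ behaviour is used in combination with subsequent integrations, so this edge case is harmless for the overall $(\ln N)^{n-3}$ target.
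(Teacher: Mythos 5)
Your argument is essentially the paper's: pull out $(1-S)^{N-n}$ using $s_m\ge 0$, pass to the tail integral $\int_{-\ln s_{m-1}}^{\infty} e^{-(q_m+1)t}\,t^k\,dt$ via the substitution $t=-\ln s_m$, and bound it --- the paper uses the explicit antiderivative $-e^{-t}P_k(t)$ (with $P_k$ of degree $k$, positive coefficients) where you use the binomial split $u^k\le 2^k\bigl((u-a)^k+a^k\bigr)$, but both routes produce $s_{m-1}^{q_m+1}$ times a \emph{full} degree-$k$ polynomial in $-\ln s_{m-1}$ before invoking $s_{m-1}\le s_2$. Your observation about the $s_{m-1}\to 1^-$ edge case is correct and applies equally to the paper's own last inequality, which silently absorbs the constant and lower-order terms of $P_k$ into $(-\ln s_{m-1})^k$; as you rightly note, this discrepancy only affects a region where the left-hand side is anyway bounded by a constant and is harmless in the subsequent integrations of Lemma~\ref{le:nondomterm}.
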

\begin{proof}
We use that the antiderivative of $e^{-t} t^k$ is given by $e^{-t} P_k(t)$ where $P_k$ is a polynomial of degree $k$.
 \begin{eqnarray*}
\int\limits_{\left(\frac{s_{1}\cdots s_{{m-1}}}{\a (N-n)}
\right)^{\frac{1}{m-2}}}^{s_{{m-1}}} 
\lefteqn{
\left(1- S - \frac{ s_{m}}{N-n}  \right)^{N-n}   
s_{m}^{q_m} (- \ln s_{m})^k ds_{m}
}&&
\\ &\leq & 
\left(1- S \right)^{N-n}   
\int\limits_{- \ln s_{{m-1}}}^{\infty} e^{-t(q_m+1)} t^k dt
\\ &  &  =
(q_m+1)^{-(k+1)} \left(1- S \right)^{N-n}   
\int\limits_{- (q_m+1) \ln s_{{m-1}}}^{\infty} e^{-t} t^k dt
\\ &  &  =
(q_m+1)^{-(k+1)} \left(1- S \right)^{N-n}   
e^{-t} P_{k}(t) \vert_{- (q_m+1) \ln s_{m-1}}^{\infty} 
\\ & \leq & 
c_{k,q_m} \left(1- S\right)^{N-n} s_{m-1}^{q_m+1}
(- \ln s_{m-1})^k
\\ & \leq & 
c_{k,q_m} \left(1- S\right)^{N-n} s_{2}^{q_m+1}
(- \ln s_{m-1})^k .
\end{eqnarray*}
\end{proof}

\begin{lemma}\label{le:nondomterm}
Assume $q_n, \dots, q_{m+1}= -1 $, $q_m >-1$ for some $m \geq 3$, and $q_{m-1}, \dots, q_1 \geq -1$. Then we have
$$
\cS (q_1, \dots, q_n) 
=
O((\ln(N-n))^{n-3}).
$$
\end{lemma}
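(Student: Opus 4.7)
The strategy mirrors the proof of Lemma~\ref{le:domterm}, dissecting the integration domain along the sets $I_k = \{s_n \le \cdots \le s_k \le 1 \le s_{k-1} \le \cdots \le s_3\}$ for $k = 3, \dots, n+1$. The contributions from $I \cap I_k$ with $k \ge 4$ are already controlled in that proof by $O((\ln(N-n))^{n-k+1}) = O((\ln(N-n))^{n-3})$, using only the exponential decay $e^{-s_i}$ on $\{s_i \ge 1\}$ and the polynomial-in-logarithms structure of $\cS_{n-k+1}$; this argument is insensitive to the specific values of $q_i$. Hence it suffices to bound the contribution from $I \cap I_3$, on which $s_3, \dots, s_n \le 1$.

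On $I \cap I_3$ the plan is to mimic the inductive integration of Lemma~\ref{le:domterm}, integrating $s_n, s_{n-1}, \dots, s_3$ in succession, but to substitute Lemma~\ref{le:non-crit-int} for Lemma~\ref{le:crit-int} at every index $i$ where $q_i > -1$. The crucial structural observation is: a critical integration ($q_i = -1$) raises the accumulated polynomial logarithmic degree by one (because $\int s^{-1}(-\ln s)^k\,ds$ produces a polynomial of degree $k+1$), whereas a non-critical integration ($q_i > -1$) preserves the logarithmic degree but introduces a saving factor $s_{i-1}^{q_i+1} \le s_2^{q_i+1}$ via Lemma~\ref{le:non-crit-int}. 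By hypothesis at least one index $m \in \{3, \dots, n\}$ is non-critical, so among the $n-2$ inner integrations at least one fails to contribute a log factor, and the accumulated log degree is at most $n-3$. After all inner integrations the integrand in $(s_1, s_2)$ is then bounded by
\[
\bigl(1 - \tfrac{s_1+s_2}{N-n}\bigr)^{N-n}\, s_1^{q_1}\, s_2^{q_2 + (q_m+1)}\, R(\ln(N-n), \ln s_1, \ln s_2),
\]
with $R$ a polynomial of total degree at most $n-3$ whose coefficients may depend on $\a$.

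Finally, the outer $(s_1, s_2)$ integration would be estimated by $(1-\cdot)^{N-n} \le e^{-s_1-s_2}$ and the elementary fact that $\int_0^\infty s^q |\ln s|^k e^{-s}\,ds < \infty$ whenever $q > -1$ and $k \ge 0$. The saving $s_2^{q_m+1}$ with $q_m + 1 > 0$ guarantees integrability at $s_2 = 0$ even in the extremal case $q_2 = -1$, since the effective exponent $q_2 + q_m + 1 > -1$; integrating $s_2$ first and using $s_2 \le s_1$ propagates the saving to the $s_1$-integration, handling $q_1 = -1$ as well. The main technical obstacle will be the careful bookkeeping, especially when $m = 3$: in this case Lemma~\ref{le:non-crit-int} requires $s_{m-1} = s_2 \le 1$, so one must split the $s_2$-range at $s_2 = 1$ and verify that the portion $\{s_2 > 1\}$ contributes only a constant (by the exponential decay, even with $q_2 = -1$), while the portion $\{s_2 \le 1\}$ enjoys the saving $s_2^{q_m+1}$ coming from the $s_3$-integration with upper limit $\min(s_2, 1) = s_2$.
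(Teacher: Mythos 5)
Your proposal follows essentially the same route as the paper's proof: dissect along the sets $I_k$, handle $k\geq 4$ by exponential decay, and on $I\cap I_3$ run the inductive integration replacing Lemma~\ref{le:crit-int} by Lemma~\ref{le:non-crit-int} at non-critical indices, so that the logarithmic degree caps at $n-3$ and a factor of the form $s_2^{q_m+1}$ (more precisely $s_2^{q_+}$ with $q_+=\sum_{l\geq 3}(q_l+1)$, which the paper tracks in full) rescues integrability of the final $(s_1,s_2)$ integration even when $q_1=q_2=-1$. The only small imprecisions are that you track just the single saving $s_2^{q_m+1}$ rather than $s_2^{q_+}$ (harmless, since it still yields a valid upper bound for $s_2\leq 1$) and that the polynomial argument in the paper is written in $\ln(\min(s_2,1))$; neither affects the correctness of the argument, and your observation about splitting at $s_2=1$ when $m=3$ corresponds exactly to the paper's use of $\min(s_2,1)$.
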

\begin{proof}
We proceed precisely as in the previous proof of Lemma \ref{le:domterm}.
We denote the range of integration by $I$ and dissect this set by
$$I_k :=\{0 \leq s_n \leq \dots \leq s_k \leq 1 \leq s_{k-1} \leq \dots  \leq s_3  \} , $$
for $k=3, \dots, n+1$.

First we deal with the term with $I \cap I_{3}=I \cap \{\ldots s_3 \leq 1\}$  as range of 
integration, hence we assume $s_i \leq 1$ for $i=3, \dots, n$.
We define 
$$
\cS _{n-m}(s_1, \dots, s_m)= 
\int\limits_{\left(\frac{ s_{1}\cdots s_{m}}{\a (N-n)}
\right)^{\frac{1}{m-1}}}^{s_{m}}
\cdots
\int\limits_{\left(\frac{ s_{1}\cdots s_{n-1}}{\a (N-n)}\right)^{\frac{1}{n-2}}}^{s_{n-1}}
\Big(1- \frac{ \sum_{i=1}^{n} s_i}{N-n}  \Big)^{N-n} 
\prod_{i=m+1}^n s_i^{-1} ds_n \dots ds_{m+1}.
$$
We know from the proof of Lemma \ref{le:crit-int}  that 
\begin{equation*} 
| \cS _{n-m}(s_1, \dots, s_m) | \leq 
P_{n-m} (\ln(N-n), \ln s_1, \dots, \ln s_m)  .
\end{equation*}
Because $q_{m}  > -1$, the next 
integration by Lemma \ref{le:non-crit-int} yields as a bound a polynomial of again degree 
$n-m$ in $\ln(N-n), \ln s_1, \dots, \ln s_{m-1}$ times $s_2^{q_m+1}$.

Proceeding in this way, each integration with respect to $s_i$ with $q_i=-1$ increases the degree of the polynomial bound by one, 
and each integration with respect to $s_m$ with $q_m >-1$ leads to a polynomial bound 
again of the same degree and multiplies this new polynomial bound by $s_2^{q_m+1}$.

Thus we obtain on $I \cap I_3$ 
$$
\cS _{n-2}(s_1, s_2)= 
P_{q_{-}} (\ln(N-n), \ln s_1, \ln (\min(s_2,1)))  s_2^{q_{+}}
$$
where we put
$q_{-}=\sum_{l=3}^n  \I(q_l=-1) $ and 
$q_{+}=\sum_{l=3}^n (q_l+1) \I(q_l > -1)$.
This now yields 
$$
\int \limits_0^{\a (N-n)}  
\int\limits_0^{s_1} 
\Big( 1 - \frac{s_1+s_2}{N-n} \Big)^{N-n}  
P_{q_{-}} (\ln(N-n), \ln s_1, \ln (\min(s_2,1)))  
 s_1^{q_{1} }s_2^{q_{2} + q_{+} } ds_2 ds_1 
$$
as a bound for $\cS (q_1, \dots, q_n)$. By our assumption 
$q_{-} \leq n-3$,
$q_{+} > 0$, thus 
$$ q_2 + q_+ >-1,\  
\mbox{ and }\ 
q_1+q_2 + q_{+} +1 >-1. $$
Hence  $\cS (q_1, \dots, q_n)$ is bounded by 
\begin{eqnarray*}
\int \limits_0^{\a (N-n)}  
\int\limits_0^{s_1} 
\lefteqn{
e^{- (s_1+s_2)}  
P_{q_{-}} (\ln(N-n), \ln s_1, \ln (\min(s_2,1)))  
 s_1^{q_{1} }s_2^{q_{2} + q_{+} } ds_2 ds_1 
}&&
\\ & \leq &
\frac 1{q_{2} + q_{+}  +1}
\int \limits_0^{\infty}  
e^{- s_1}  
P_{q_{-}} (\ln(N-n), \ln s_1, \ln (\min(s_1,1)))  
 s_1^{q_{1}+q_{2} + q_{+} +1}ds_1 
\\&=&
O((\ln(N-n))^{q_{-}})
\\ &=&
O((\ln(N-n))^{n-3})
\end{eqnarray*}
on $I \cap I_3$. 
On $I \cap I_k$ with $k \geq 4$, the term $\cS(q_1, \dots, q_m)$ is by monotonicity (observe that $s_k, \dots, s_n \leq 1$) bounded by $\cS(q_1, \dots, q_{k-1}, -1, \dots, -1)$  which in turn is bounded by 
$$
\int\limits_{I \cap I_k} \exp\left\{ - \sum_{i=1}^{k-1} s_i\right\} \left( \prod\limits_{i=1}^{k-1} s_i^{q_i} \right)| P_{n-k-1}+E_{n-k-1} | ds_{k-1} \dots ds_1
\leq O\left((\ln (N-n))^{n-3} \right)
$$
as in the proof of Lemma \ref{le:domterm}, Equation \eqref{eq:rest-int}. 
Hence the integration over $I_4\cup I_5 \dots$ leads to a term of order $O((\ln(N-n))^{n-3})$. 
This proves our lemma.
\end{proof}

\end{appendix}

\subsection*{Acknowledgement}
The authors want to thank the referees for their careful reading of the manuscript and various suggestions for improvement.


\vskip 4mm

Matthias Reitzner \\
  {\small        Universit\"at Osnabr\"uck}\\
    {\small      Fachbereich 6: Mathematik/Informatik}\\
   {\small       Albrechtstr. 28 a }\\
    {\small      49076 Osnabr\"uck, Germany} \\
          {\small \tt matthias.reitzner@uni-osnabrueck.de} \\

   Carsten Sch\"utt \\
     {\small       Christian Albrechts Universit\"at \hskip 20mm   Case Western Reserve University}\\
        {\small    Mathematisches Seminar \hskip 29mm   Department of Mathematics}\\
        {\small    24098 Kiel, Germany\hskip 35mmCleveland, Ohio 44106, U. S. A.} \\
         {\small \tt schuett@math.uni-kiel.de}   \\

     \and Elisabeth Werner\\
{\small Department of Mathematics \ \ \ \ \ \ \ \ \ \ \ \ \ \ \ \ \ \ \ Universit\'{e} de Lille 1}\\
{\small Case Western Reserve University \ \ \ \ \ \ \ \ \ \ \ \ \ UFR de Math\'{e}matiques }\\
{\small Cleveland, Ohio 44106, U. S. A. \ \ \ \ \ \ \ \ \ \ \ \ \ \ \ 59655 Villeneuve d'Ascq, France}\\
{\small \tt elisabeth.werner@case.edu}\\ \\

\end{document}